\theoremstyle{definition}\newtheorem{prop}{Proposition}[section]
\theoremstyle{definition}
\theoremstyle{definition}\newtheorem{definition}[prop]{Definition}
\theoremstyle{definition}
\theoremstyle{definition}\newtheorem{corollary}[prop]{Corollary}
\theoremstyle{definition}\newtheorem{theorem}[prop]{Theorem}
\theoremstyle{definition}\newtheorem{example}[prop]{Example}
\title{Independence equivalence classes of paths}
\author{Boon Leong Ng}
\date{%
    National Institute of Education, Nanyang Technological University, Singapore\\
    %\today
}
\begin{document}

\maketitle

\begin{abstract}
The independence equivalence class of a graph $G$ is the set of graphs that have the same independence polynomial as $G$. A graph whose independence equivalence class contains only itself, up to isomorphism, is independence unique. Beaton, Brown and Cameron~(2019) showed that paths with an odd number of vertices are independence unique and raised the problem of finding the independence equivalence class of paths with an even number of vertices. The problem is completely solved in this paper.
\end{abstract}

\section{Definitions and Introduction}
Let $G$ be a simple graph with vertex set $V(G)$ and edge set $E(G)$. A set of vertices $U\subseteq V(G)$ is said to be \emph{independent} if no two vertices in $U$ are adjacent. The \emph{independence number} $\alpha(G)$ of $G$ is the cardinality of the largest independent set of $G$. The \emph{independence polynomial} $I(G,x)$ of $G$ is given by $I(G,x) = \sum_{k=0}^{\alpha(G)} i_k(G) x^k$ where $i_k(G)$ is the number of independent subsets of $V(G)$ of cardinality $k$. A survey of results on independence polynomials can be found in~\cite{levit}.

Two graphs $G$ and $H$ are said to be \emph{independence equivalent} if $I(G,x) = I(H,x)$. The \emph{independence equivalence class} $\mathcal{I}(G)$ of a graph $G$ is the set of graphs which are independence equivalent to $G$. A graph $G$ is said to be \emph{independence unique} if $\mathcal{I}(G)=\{G\}$, that is, if $I(G,x) = I(H,x)$ implies that $G\cong H$ ($G$ is isomorphic to $H$).

The existence of independence equivalent graphs was already noticed very early on by Wingard~\cite{wingard}. Wagner and Wang~\cite{huawang} showed that almost all trees have a (nonisomorphic) independence equivalent tree, in the sense that as $n\rightarrow\infty$, the proportion of trees of order~$n$ which have an independence equivalent tree, out of all trees of order~$n$, tend to~$1$. Makowsky and Rakita~\cite{makowsky} would subsequently show that almost all graphs are not independence unique.

The following two results on finding the independence polynomial of a graph in terms of that of its subgraphs can be found in~\cite{hoedeli}.
\begin{prop}\label{prop:delvert}
For any vertex $u\in V(G)$,
$$I(G, x) = I(G-u, x)+xI(G-N[u], x).$$
\end{prop}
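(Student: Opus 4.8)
The plan is to prove the identity combinatorially by sorting the independent sets of $G$ according to whether or not they contain the vertex $u$. First I would observe that an independent set $S\subseteq V(G)$ with $u\notin S$ is precisely an independent set of the graph $G-u$: deleting $u$ removes no other vertex and destroys no edge among $V(G)\setminus\{u\}$, so $S\mapsto S$ is a cardinality-preserving bijection between the independent sets of $G$ that avoid $u$ and all independent sets of $G-u$. This family therefore contributes exactly the term $I(G-u,x)$.

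Next I would handle the independent sets $S$ with $u\in S$. The key point is that such an $S$ can contain no neighbour of $u$, so $S\setminus\{u\}$ is an independent set of $G$ disjoint from $N[u]$, i.e.\ an independent set of $G-N[u]$; conversely, for any independent set $T$ of $G-N[u]$ the set $T\cup\{u\}$ is independent in $G$ and contains $u$. Thus $S\mapsto S\setminus\{u\}$ is a bijection from this family onto the independent sets of $G-N[u]$ that decreases cardinality by exactly one, so the corresponding generating function is $x\,I(G-N[u],x)$.

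Combining the two cases, every independent set of $G$ is accounted for exactly once, which at the level of coefficients reads $i_k(G)=i_k(G-u)+i_{k-1}(G-N[u])$ for all $k\ge 0$ (with the convention $i_{-1}\equiv 0$); multiplying by $x^k$ and summing over $k$ gives the stated identity. There is no real obstacle here; the only points requiring a little care are the handling of the empty set and the cardinality shift in the second bijection.
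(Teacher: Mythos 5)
Your proof is correct and complete: the partition of independent sets by whether they contain $u$, together with the two cardinality-tracking bijections, is exactly the standard argument for this identity. The paper itself does not prove this proposition but cites it from the literature, so there is nothing to compare against; your write-up would serve as a perfectly adequate self-contained proof.
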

\begin{prop}\label{prop:deledge}
For any edge $e=uv$ in $E(G)$,
$$I(G, x) = I(G-e, x)-x^2 I(G-(N(u)\cup N(v)), x).$$
\end{prop}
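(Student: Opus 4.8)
The plan is to argue combinatorially at the level of coefficients, comparing the independent sets of $G$ with those of $G-e$. Write $e=uv$. Deleting an edge cannot create an adjacency, so every independent set of $G$ is still independent in $G-e$; conversely, a set $S\subseteq V(G)$ that is independent in $G-e$ fails to be independent in $G$ exactly when both endpoints of the deleted edge lie in $S$, that is, when $\{u,v\}\subseteq S$. Hence for every $k$,
$$i_k(G)=i_k(G-e)-a_k,$$
where $a_k$ is the number of $k$-element subsets of $V(G)$ that are independent in $G-e$ and contain both $u$ and $v$.

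The key step is to evaluate $a_k$. I would exhibit a bijection between these sets $S$ and the independent sets of $H:=G-(N(u)\cup N(v))$ of cardinality $k-2$, namely $S\mapsto S\setminus\{u,v\}$. It is well defined: if $S$ is independent in $G-e$ with $u,v\in S$, then $S\setminus\{u,v\}$ cannot meet the neighbours of $u$ in $G-e$ (which are $N(u)\setminus\{v\}$) nor those of $v$ (which are $N(v)\setminus\{u\}$), so it avoids all of $N(u)\cup N(v)$ — a set that already contains $u$ and $v$ themselves since $uv\in E(G)$ — and because $G$ and $G-e$ induce the same subgraph on $V(G)\setminus\{u,v\}$, it is independent in $H$. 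It is a bijection because adjoining $u$ and $v$ to any independent set of $H$ yields a set containing $u,v$ whose only potential new adjacency, $uv$, is absent in $G-e$. Therefore $a_k=i_{k-2}(H)$; substituting into the displayed identity, multiplying by $x^k$, and summing over $k$ gives $I(G,x)=I(G-e,x)-x^2 I(H,x)$, as claimed.

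There is no deep obstacle here — this is a standard deletion identity — and the only point requiring care is the bookkeeping with open versus closed neighbourhoods, in particular that $N(u)\cup N(v)$ already absorbs $u$ and $v$ and that passing from $G$ to $G-e$ alters adjacencies only at $u$ and $v$. As an alternative that avoids the bijection, one could derive the statement from Proposition~\ref{prop:delvert}: applying vertex deletion at $u$ in both $G$ and $G-e$ (and noting $G-e-u=G-u$) yields $I(G-e,x)-I(G,x)=x\bigl(I(G-(N[u]\setminus\{v\}),x)-I(G-N[u],x)\bigr)$, and then applying Proposition~\ref{prop:delvert} at $v$ to the graph $G-(N[u]\setminus\{v\})$ — whose vertex $v$ is still present and whose deletion of $N[v]$ removes precisely $N(u)\cup N(v)$ in total — produces the factor $x^2$. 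Either route is short; I would present the bijective one since it makes the origin of the $-x^2$ term most transparent.
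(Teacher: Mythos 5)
Your argument is correct. Note that the paper does not prove this proposition at all: it is quoted from Hoede and Li together with Proposition~\ref{prop:delvert}, so there is nothing internal to compare against. Your bijection $S\mapsto S\setminus\{u,v\}$ is the standard and complete justification: the one point that needs care --- that $N(u)\cup N(v)$ already contains $u$ and $v$ because $uv\in E(G)$, so deleting $N(u)\cup N(v)$ removes the two endpoints as well as their neighbours --- is exactly the point you address, and the observation that $G$ and $G-e$ induce the same subgraph outside $\{u,v\}$ closes the argument. Your alternative derivation from Proposition~\ref{prop:delvert} (delete $u$ in both $G$ and $G-e$, then delete $v$ in $G-(N[u]\setminus\{v\})$) also checks out, since $(N[u]\setminus\{v\})\cup N_{G'}[v]=N(u)\cup N(v)$. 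Either version would serve as a valid proof; no gap.
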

\begin{figure}
\centering
\begin{tikzpicture}
\filldraw(0,0) circle[radius=2pt]node[left]{$a$};
\filldraw(0,2) circle[radius=2pt];
\filldraw(1,1) circle[radius=2pt]node[above]{$b$};
\filldraw(2,1) circle[radius=2pt];
\node at (3,1) {$\cdots$};
\filldraw(4,1) circle[radius=2pt];
\draw(0,0)--(0,2);
\draw(0,0)--(1,1);
\draw(0,2)--(1,1);
\draw(1,1)--(2,1);
\draw(2,1)--(2.5,1);
\draw(3.5,1)--(4,1);
\node at (0.6,0.4) {$f$};
\end{tikzpicture}
\caption{The graph $D_n$ with $n$ vertices}\label{fig:dn}
\end{figure}
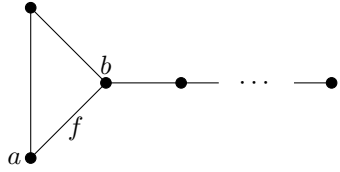
These are very useful in showing that graphs are independence equivalent. For example, the following family of independence equivalent graphs was observed by Chism~\cite{chism}.

\begin{prop}\cite{chism}\label{prop:cndn}
For $n\geq 4$, if $G$ is the cycle graph $C_n$ and $H$ is the graph $D_n$ shown in Figure~\ref{fig:dn}, then $G$ and $H$ are independence equivalent.
\end{prop}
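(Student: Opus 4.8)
The plan is to apply the edge-deletion identity of Proposition~\ref{prop:deledge} once to $D_n$ and once to $C_n$, choosing in each case an edge whose removal ``unfolds'' the graph into a path, and then to check that the resulting expressions coincide. For $D_n$, write the triangle as $\{a,b,c\}$ with $c$ the apex, let $f=ab$ be the marked edge of Figure~\ref{fig:dn}, and let the pendant path hanging off $b$ be $b,v_1,v_2,\dots,v_{n-3}$, so that $D_n$ has $1+1+1+(n-3)=n$ vertices. The reason to use the marked edge $f$ (rather than, say, $ac$) is that $f$ is incident to the vertex $b$ that carries the tail.

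First I would analyze $D_n-f$. Deleting $ab$ leaves $a$ adjacent only to $c$, so the graph straightens out into the path $a-c-b-v_1-\dots-v_{n-3}$ on $n$ vertices; hence $D_n-f\cong P_n$. Next, since $N(a)=\{b,c\}$ and $N(b)=\{a,c,v_1\}$, we get $N(a)\cup N(b)=\{a,b,c,v_1\}$, and deleting these four vertices leaves $v_2-\dots-v_{n-3}$, i.e. $D_n-(N(a)\cup N(b))\cong P_{n-4}$, with the convention that $P_0$ is the empty graph (this case arises exactly when $n=4$). Proposition~\ref{prop:deledge} then yields
$$I(D_n,x)=I(P_n,x)-x^2\,I(P_{n-4},x).$$

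The same bookkeeping handles $C_n$: fixing any edge $e=uv$ of $C_n$, we have $C_n-e\cong P_n$, while $N(u)\cup N(v)$ consists of $u$, $v$, and their two other cyclic neighbours — four distinct vertices when $n\ge 4$ — whose deletion leaves $P_{n-4}$. Hence Proposition~\ref{prop:deledge} gives $I(C_n,x)=I(P_n,x)-x^2\,I(P_{n-4},x)$ as well, and comparing the two expressions proves the claim.

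There is no genuine obstacle here; the only points requiring care are the correct identification of the two reduced subgraphs in each graph and the boundary case $n=4$, where the second reduction produces the empty graph in both computations — and the hypothesis $n\ge 4$ is precisely what ensures the four deleted vertices are distinct so that what remains really is $P_{n-4}$ with $n-4\ge 0$. As an alternative one could expand $I(C_n,x)$ and $I(D_n,x)$ directly using the classical path and cycle recurrences coming from Proposition~\ref{prop:delvert}, but the edge-deletion route above is shorter and makes transparent why the two independence polynomials agree.
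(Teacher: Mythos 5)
Your proof is correct and coincides with the paper's own (second) argument: the paper also applies Proposition~\ref{prop:deledge} to the edge $f=ab$ of $D_n$ and to an arbitrary edge of $C_n$, obtaining $P_n$ and $P_{n-4}$ in both cases. (The paper additionally records a one-line vertex-deletion variant via Proposition~\ref{prop:delvert}, but your edge-deletion route is exactly its stated alternative.)
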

%\begin{proof}
%For any vertex $u\in V(G)$, $G-u \cong H-a \cong P_{n-1}$ and $G-N[u] \cong H-N[a] \cong P_{n-3}$, so by Proposition~\ref{prop:delvert}, $I(G,x)=I(H,x)$. 

%Alternatively, for any edge $e=uv\in E(G)$, $G-e \cong H-f \cong P_n$ and $G-(N(u)\cup N(v)) \cong H-(N(a)\cup N(b)) \cong P_{n-4}$, so by Proposition~\ref{prop:deledge}, $I(G,x)=I(H,x)$.
%\end{proof}

From the definition of the independence polynomial, the following proposition is clear.
\begin{prop}\label{prop:multi}
If $G$ is a disconnected graph with connected components $G_1,\ldots,G_r$, then
$$I(G,x)=\prod_{i=1}^r I(G_i,x).$$
\end{prop}
In other words, the independence polynomial of a graph $G$ is the product of the independence polynomials of the connected components of $G$. The factorisation (or lack thereof) of $I(G,x)$ over $\mathbb{Z}[x]$ is therefore a useful tool in determining $\mathcal{I}(G)$. We will explore this in detail in Section~\ref{sec:factor}.

Wingard showed that any tree which is independence equivalent to the path $P_n$ must be isomorphic to $P_n$ (Corollary~5.6 of~\cite{wingard}). Since we can determine the number of vertices and edges from the coefficients of the independence polynomial, it follows that any connected graph which is independence equivalent to $P_n$ must also be a tree, and hence isomorphic to $P_n$. Therefore, we need only consider non-connected graphs. One such graph was discovered by Zhang~\cite{zhang}.

\begin{prop}\label{prop:zhang}
For $n\geq 2$, if $G$ is the path $P_{2n}$ and $H$ is the graph $P_{n-1}\cup C_{n+1}$, then $G$ and $H$ are independence equivalent.
\end{prop}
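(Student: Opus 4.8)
The plan is to reduce everything to the independence polynomials of paths. Write $p_k$ for $I(P_k,x)$, with the conventions $p_0 = 1$ (the empty graph) and $p_{-1} = 1$; applying Proposition~\ref{prop:delvert} to an endpoint of $P_k$ gives the recurrence $p_k = p_{k-1} + x p_{k-2}$. Using the elementary fact that $I$ is multiplicative over disjoint unions (independent sets of $G\cup H$ are pairs of independent sets of $G$ and $H$), we have $I(P_{n-1}\cup C_{n+1},x) = p_{n-1}\, I(C_{n+1},x)$, so it suffices to establish the single polynomial identity $p_{2n} = p_{n-1}\, I(C_{n+1},x)$.

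First I would compute $I(C_{n+1},x)$. Apply Proposition~\ref{prop:delvert} to any vertex $u$ of $C_{n+1}$: then $C_{n+1} - u \cong P_n$, and $C_{n+1} - N[u] \cong P_{n-2}$ (deleting $u$ together with its two neighbours), which gives $I(C_{n+1},x) = p_n + x p_{n-2}$. Here $n\ge 2$ is exactly what is needed for $C_{n+1}$ to be a genuine cycle and for $P_{n-2}$ to make sense.

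Next I would apply Proposition~\ref{prop:delvert} to the vertex $v_n$ of $P_{2n} = v_1 v_2\cdots v_{2n}$. Deleting $v_n$ disconnects the path into $P_{n-1}\cup P_n$ (vertices $v_1,\dots,v_{n-1}$ and $v_{n+1},\dots,v_{2n}$), while $P_{2n} - N[v_n]$ removes $v_{n-1},v_n,v_{n+1}$ and leaves $P_{n-2}\cup P_{n-1}$. By multiplicativity again this yields $p_{2n} = p_{n-1} p_n + x\, p_{n-2} p_{n-1} = p_{n-1}(p_n + x p_{n-2})$. Comparing with the previous paragraph gives $p_{2n} = p_{n-1}\, I(C_{n+1},x) = I(P_{n-1}\cup C_{n+1},x)$, as desired. (One could instead route through Proposition~\ref{prop:deledge}, deleting a middle edge of $P_{2n}$ and an edge of $C_{n+1}$, which leads to $p_{2n} = (p_n - x p_{n-2})(p_n + x p_{n-2})$ and $I(C_{n+1},x) = p_{n+1} - x^2 p_{n-3}$; one then checks $p_n - x p_{n-2} = p_{n-1}$ and $p_{n+1} - x^2 p_{n-3} = p_n + x p_{n-2}$ directly from the recurrence.)

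Honestly, there is no serious obstacle: the whole argument is two applications of Proposition~\ref{prop:delvert} plus multiplicativity of $I$ over disjoint unions. The only points requiring a little care are bookkeeping the boundary cases — in particular $n=2$, where $P_0$ and $C_3$ appear and one should confirm $I(P_1\cup C_3,x) = (1+x)(1+3x) = 1+4x+3x^2 = p_4$ — and recognising at the outset that the \emph{right} vertex to delete from $P_{2n}$ is the one splitting it as $P_{n-1}\cup P_n$, so that the very expression $p_n + x p_{n-2}$ computed for $I(C_{n+1},x)$ falls out of the path computation as well.
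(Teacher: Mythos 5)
Your proof is correct and is essentially the paper's own argument: both apply Proposition~\ref{prop:delvert} to a central vertex of $P_{2n}$ and to a vertex of the $C_{n+1}$ component, reducing each side to $I(P_{n-1}\cup P_n,x)+xI(P_{n-2}\cup P_{n-1},x)$; you merely phrase it as the polynomial identity $p_{2n}=p_{n-1}(p_n+xp_{n-2})$ rather than as a pair of isomorphisms. The extra care you take with boundary cases and the alternative edge-deletion route are fine but not needed.
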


%\begin{proof}
%For vertex $u\in V(G)$ be one of the two central vertices, and vertex $v\in V(H)$ be any vertex on the $C_{n+1}$ component. Then $G-u \cong P_{n-1}\cup P_{n} \cong H-v$ and $G-N[u] \cong P_{n-2}\cup P{n-1} \cong H-N[v]$.
%\end{proof}

It is because of this proposition that the problem of finding the independence equivalence class of (even) paths becomes intimately tied to that of finding the independence equivalence class of cycle graphs. The problem of finding the independence equivalence class of even cycle graphs was solved by Beaton, Brown and Cameron~\cite{beaton}, and that of odd cycle graphs by Ng~\cite{ng}.

\begin{theorem}\cite{beaton,ng}\label{prop:cn}
\begin{enumerate}[label=(\roman*)]
    \item $\mathcal{I}(C_3)=\{C_3\}$,
    \item $\mathcal{I}(C_6)=\{C_6, D_6, K_4-e \cup P_2\}$,
    \item $\mathcal{I}(C_9)=\{C_9, D_9, C_3 \cup G_a, C_3 \cup G_b, C_3 \cup G_c, C_3 \cup G_d\}$  (see Figure~\ref{fig:c9}),
    \item $\mathcal{I}(C_{15})=\{C_{15}, D_{15}, C_3 \cup C_5 \cup G^\prime_a, C_3 \cup D_5 \cup G^\prime_a, C_3 \cup C_5 \cup G^\prime_b, C_3 \cup D_5 \cup G^\prime_b, C_3 \cup C_5 \cup G^\prime_c, C_3 \cup D_5 \cup G^\prime_c\}$ (see Figure~\ref{fig:c15}),
    \item $\mathcal{I}(C_{n})=\{C_n,D_n\}$ for $n\geq 4$ and $n\neq 6,9,15$.
\end{enumerate}
\end{theorem}

\begin{figure}
\centering
\begin{tikzpicture}
\filldraw(0,4) circle[radius=2pt];
\filldraw(0,3) circle[radius=2pt];
\filldraw(2,3) circle[radius=2pt];
\filldraw(1,2) circle[radius=2pt];
\filldraw(1,1) circle[radius=2pt];
\filldraw(1,0) circle[radius=2pt];
\draw(0,4)--(0,3);
\draw(0,3)--(2,3);
\draw(0,3)--(1,2);
\draw(2,3)--(1,2);
\draw(1,2)--(1,1);
\draw(1,1)--(1,0);
\node at (1,-1) {$G_a$};

\filldraw(4,4) circle[radius=2pt];
\filldraw(3,3) circle[radius=2pt];
\filldraw(5,3) circle[radius=2pt];
\filldraw(4,2) circle[radius=2pt];
\filldraw(4,1) circle[radius=2pt];
\filldraw(4,0) circle[radius=2pt];
\draw(4,4)--(3,3);
\draw(4,4)--(5,3);
\draw(3,3)--(4,2);
\draw(5,3)--(4,2);
\draw(4,2)--(4,1);
\draw(4,1)--(4,0);
\node at (4,-1) {$G_b$};

\filldraw(6,4) circle[radius=2pt];
\filldraw(8,4) circle[radius=2pt];
\filldraw(6,3) circle[radius=2pt];
\filldraw(8,3) circle[radius=2pt];
\filldraw(7,2) circle[radius=2pt];
\filldraw(7,1) circle[radius=2pt];
\draw(6,4)--(8,4);
\draw(6,4)--(6,3);
\draw(8,4)--(8,3);
\draw(6,3)--(7,2);
\draw(8,3)--(7,2);
\draw(7,2)--(7,1);
\node at (7,-1) {$G_c$};

\filldraw(9,4) circle[radius=2pt];
\filldraw(11,4) circle[radius=2pt];
\filldraw(10,3) circle[radius=2pt];
\filldraw(10,2) circle[radius=2pt];
\filldraw(9,1) circle[radius=2pt];
\filldraw(11,1) circle[radius=2pt];
\draw(9,4)--(11,4);
\draw(9,4)--(10,3);
\draw(11,4)--(10,3);
\draw(10,3)--(10,2);
\draw(10,2)--(9,1);
\draw(10,2)--(11,1);
\node at (10,-1) {$G_d$};
\end{tikzpicture}
\caption{Graphs $G_a$, $G_b$, $G_c$ and $G_d$ in Theorem~\ref{prop:cn}}\label{fig:c9}
\end{figure}

\begin{figure}
\centering
\begin{tikzpicture}
\filldraw(0,4) circle[radius=2pt];
\filldraw(0,3) circle[radius=2pt];
\filldraw(2,3) circle[radius=2pt];
\filldraw(1,2) circle[radius=2pt];
\filldraw(1,1) circle[radius=2pt];
\filldraw(1,0) circle[radius=2pt];
\filldraw(1,-1) circle[radius=2pt];
\draw(0,4)--(0,3);
\draw(0,3)--(2,3);
\draw(0,3)--(1,2);
\draw(2,3)--(1,2);
\draw(1,2)--(1,1);
\draw(1,1)--(1,0);
\draw(1,0)--(1,-1);
\node at (1,-2) {$G^\prime_a$};

\filldraw(5,4) circle[radius=2pt];
\filldraw(4,3) circle[radius=2pt];
\filldraw(6,3) circle[radius=2pt];
\filldraw(5,2) circle[radius=2pt];
\filldraw(5,1) circle[radius=2pt];
\filldraw(5,0) circle[radius=2pt];
\filldraw(5,-1) circle[radius=2pt];
\draw(5,4)--(4,3);
\draw(5,4)--(6,3);
\draw(4,3)--(5,2);
\draw(6,3)--(5,2);
\draw(5,2)--(5,1);
\draw(5,1)--(5,0);
\draw(5,0)--(5,-1);
\node at (5,-2) {$G^\prime_b$};

\filldraw(9,4) circle[radius=2pt];
\filldraw(8,3) circle[radius=2pt];
\filldraw(10,3) circle[radius=2pt];
\filldraw(8,2) circle[radius=2pt];
\filldraw(10,2) circle[radius=2pt];
\filldraw(9,1) circle[radius=2pt];
\filldraw(9,0) circle[radius=2pt];
\draw(9,4)--(8,3);
\draw(9,4)--(10,3);
\draw(8,3)--(8,2);
\draw(10,3)--(10,2);
\draw(8,2)--(9,1);
\draw(10,2)--(9,1);
\draw(9,1)--(9,0);
\node at (9,-2) {$G^\prime_c$};
\end{tikzpicture}
\caption{Graphs $G^\prime_a$, $G^\prime_b$ and $G^\prime_c$ in Theorem~\ref{prop:cn}}\label{fig:c15}
\end{figure}

In the case of paths, Beaton, Brown and Cameron~\cite{beaton} showed that $P_n$ is independence unique for odd~$n$.

For even $n$, we can make use of Propositions~\ref{prop:zhang} and~\ref{prop:cn} to list out some graphs which are independence equivalent to $P_n$. If $n+2=2^t m$, where $t\geq 1$ and $m$ is odd, then, by Proposition~\ref{prop:zhang}, the following graphs are in $\mathcal{I}(P_n)$ (note that in the special case $m=1$, we only have $Z_{n,1}$ to $Z_{n,t-2}$):
\begin{eqnarray*}
Z_{n,0} & = & P_{2^{t} m-2},\\
Z_{n,1} & = & C_{2^{t-1} m} \cup P_{2^{t-1} m-2},\\
Z_{n,2} & = & C_{2^{t-1} m} \cup C_{2^{t-2} m} \cup P_{2^{t-2} m-2}, \\
&\vdots&\\
Z_{n,t-2} & = & C_{2^{t-1} m} \cup C_{2^{t-2} m} \cup \cdots \cup C_{4m} \cup P_{4m-2}, \\
Z_{n,t-1} & = & C_{2^{t-1} m} \cup C_{2^{t-2} m} \cup \cdots \cup C_{4m} \cup C_{2m} \cup P_{2m-2}, \\
Z_{n,t} & = & C_{2^{t-1} m} \cup C_{2^{t-2} m} \cup \cdots \cup C_{4m} \cup C_{2m} \cup C_{m} \cup P_{m-2}.
\end{eqnarray*}
This motivates the following definition of a set of independence equivalent graphs.
\begin{definition}\label{def:mathcald}
Let $G=H \cup C_{k_1} \cup C_{k_2} \cup \cdots \cup C_{k_m}$ where $H$ does not have any connected components which are cycles. Then we define
$$\mathcal{D}(G) = \left\{ H \cup U_{k_1} \cup U_{k_2} \cup \cdots \cup U_{k_m} ~\vert~ U_{k_i}\in\mathcal{I}(C_{k_i})~\forall~i\in\{1,\ldots,m\} \right\}.$$
%For any graph $G$, let $\mathcal{D}(G)$ be the set of graphs obtained by replacing some (possibly all or none) of the connected components of $G$ which are cycle graphs $C_k$, with another member of $\mathcal{I}(C_k)$. 
\end{definition}

It follows from Proposition~\ref{prop:multi} that $\mathcal{D}(G)\subseteq\mathcal{I}(G)$. The following examples make use of Proposition~\ref{prop:cn}.

\begin{example}
$\mathcal{D}(Z_{18,2})=\{C_{10} \cup C_5 \cup P_3, D_{10} \cup C_5 \cup P_3, C_{10} \cup D_5 \cup P_3, D_{10} \cup D_5 \cup P_3\}$.
\end{example}

\begin{example}
$\mathcal{D}(Z_{10,2})=\{C_6 \cup C_3 \cup P_1, D_6 \cup C_3 \cup P_1, (K_4-e) \cup P_2 \cup C_3 \cup P_1\}$.
\end{example}

\begin{example}
$\mathcal{D}(Z_{16,1})=\{C_9 \cup P_7, D_9 \cup P_7, C_3 \cup G_a \cup P_7, C_3 \cup G_b \cup P_7, C_3 \cup G_c \cup P_7, C_3 \cup G_d \cup P_7\}$  (see Figure~\ref{fig:c9}).
\end{example}

\begin{example}
$\mathcal{D}(Z_{28,1})=\{C_{15} \cup P_{13}, D_{15} \cup P_{13}, C_3 \cup C_5 \cup G^\prime_a \cup P_{13},  C_3 \cup D_5 \cup G^\prime_a \cup P_{13}, C_3 \cup C_5 \cup G^\prime_b \cup P_{13},  C_3 \cup D_5 \cup G^\prime_b \cup P_{13}, C_3 \cup C_5 \cup G^\prime_c \cup P_{13},  C_3 \cup D_5 \cup G^\prime_c \cup P_{13}\}$  (see Figure~\ref{fig:c15}).
\end{example}

\begin{definition}\label{def:mathcalp}
Let $n$ be a positive even integer and $n+2=2^t m$, where $t$ is a positive integer and $m$ is odd. Define
$$Z_{n,i}=C_{2^{t-1} m} \cup C_{2^{t-2} m} \cup \ldots \cup C_{2^{t-i} m} \cup P_{2^{t-i} m-2},$$
and
$$\mathcal{P}_n=
     \begin{cases}
       \{P_n\}\cup\bigcup_{i=1}^t \mathcal{D}(Z_{n,i}) &\quad\text{if }m\geq3, \\
       \{P_n\}\cup\bigcup_{i=1}^{t-2} \mathcal{D}(Z_{n,i}) &\quad\text{if }m=1. \\
     \end{cases}.$$
\end{definition}
Then $\mathcal{P}_n\subseteq\mathcal{I}(P_n)$.

We will eventually show (Theorem~\ref{prop:pathequiv}) that $\mathcal{P}_n = \mathcal{I}(P_n)$ for all odd $m$ except~$3$. The complexity of the $m=3$ case was already hinted at by Beaton, Brown and Cameron~\cite{beaton} when they showed that $\mathcal{I}(P_{10})$ contains ten graphs.

The set $\mathcal{P}_{10}$ contains seven graphs. The remaining three graphs arise from the fact that $P_1\cup C_6$ is independence equivalent to $Y_{3,2,1}$, $P_2\cup E_{1,1}$, and $P_2 \cup A_{1,1}$ (see Figure~\ref{fig:p10}).

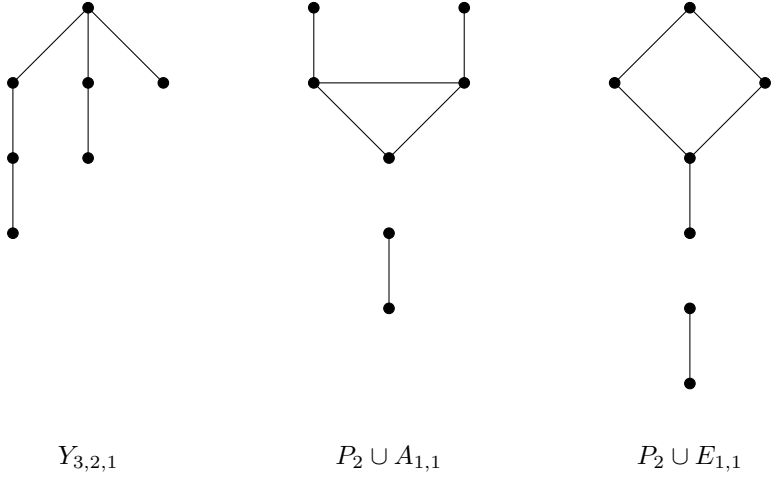
\begin{figure}
\centering
\begin{tikzpicture}
\filldraw(1,6) circle[radius=2pt];
\filldraw(0,5) circle[radius=2pt];
\filldraw(1,5) circle[radius=2pt];
\filldraw(2,5) circle[radius=2pt];
\filldraw(1,4) circle[radius=2pt];
\filldraw(0,4) circle[radius=2pt];
\filldraw(0,3) circle[radius=2pt];
\draw(0,5)--(1,6);
\draw(1,5)--(1,6);
\draw(2,5)--(1,6);
\draw(1,4)--(1,5);
\draw(0,4)--(0,5);
\draw(0,3)--(0,4);
\node at (1,0) {$Y_{3,2,1}$};

\filldraw(4,6) circle[radius=2pt];
\filldraw(6,6) circle[radius=2pt];
\filldraw(4,5) circle[radius=2pt];
\filldraw(6,5) circle[radius=2pt];
\filldraw(5,4) circle[radius=2pt];
\filldraw(5,3) circle[radius=2pt];
\filldraw(5,2) circle[radius=2pt];
\draw(4,6)--(4,5);
\draw(6,6)--(6,5);
\draw(4,5)--(6,5);
\draw(4,5)--(5,4);
\draw(6,5)--(5,4);
\draw(5,3)--(5,2);
\node at (5,0) {$P_2\cup A_{1,1}$};

\filldraw(9,6) circle[radius=2pt];
\filldraw(8,5) circle[radius=2pt];
\filldraw(10,5) circle[radius=2pt];
\filldraw(9,4) circle[radius=2pt];
\filldraw(9,3) circle[radius=2pt];
\filldraw(9,2) circle[radius=2pt];
\filldraw(9,1) circle[radius=2pt];
\draw(9,6)--(8,5);
\draw(9,6)--(10,5);
\draw(8,5)--(9,4);
\draw(10,5)--(9,4);
\draw(9,4)--(9,3);
\draw(9,2)--(9,1);
\node at (9,0) {$P_2 \cup E_{1,1}$};
\end{tikzpicture}
\caption{Graphs $Y_{3,2,1}$, $P_2\cup A_{1,1}$ and $P_2 \cup E_{1,1}$}\label{fig:p10}
\end{figure}

The independence equivalence of $P_1\cup C_6$ and $Y_{3,2,1}$ is a special case of a more general statement, that $P_1\cup C_k$ and $Y_{k-3,2,1}$ (see Figure~\ref{fig:y321}) are independence equivalent for all $k\geq 4$. This can be observed by applying Proposition~\ref{prop:delvert} to one of the vertices in $C_k$ and the vertex adjacent to vertices of degree~3 and~1 in $Y_{k-3,2,1}$. Now, for $n=3\times2^t - 2$, where $t\geq 2$, the graph $Z_{n,t}$ is
$$C_{3\times 2^{t-1}} \cup C_{3\times 2^{t-2}} \cup \cdots \cup C_{6} \cup C_{3} \cup P_{1}.$$

\begin{figure}
\centering
\begin{tikzpicture}
\filldraw(0,1) circle[radius=2pt];
\filldraw(1,0) circle[radius=2pt];
\filldraw(1,1) circle[radius=2pt];
\filldraw(1,2) circle[radius=2pt]node[above]{$v_1$};
\filldraw(2,1) circle[radius=2pt];
\filldraw(2,2) circle[radius=2pt]node[above]{$v_2$};
\filldraw(4,2) circle[radius=2pt]node[above]{$v_{k-3}$};
\draw(0,1)--(1,0);
\draw(0,1)--(1,1);
\draw(0,1)--(1,2);
\draw(1,1)--(2,1);
\draw(1,2)--(2,2);
\draw(2,2)--(2.5,2);
\draw(3.5,2)--(4,2);
\node at (3,2) {$\cdots$};
\end{tikzpicture}
\caption{$Y_{k-3,2,1}$}\label{fig:y321}
\end{figure}
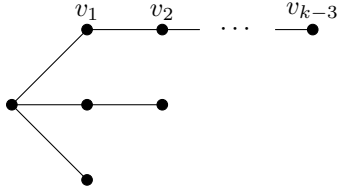

We can thus replace the disjoint union of $P_{1}$ and any of the cycle graphs $C_{3\times 2^z}$ in $Z_{n,t}$ with $Y_{3\times 2^z-3,2,1}$ to obtain an independence equivalent graph.

\begin{definition}\label{def:mathcaly1}
Let $n$ be a positive integer such that $n=3\times2^t - 2$ for some positive integer $t$. Define
$$\mathcal{Y}^{(1)}_n = \begin{cases}
    \displaystyle\bigcup_{z=1}^{t-1} \mathcal{D}\left(Y_{3\times 2^z-3,2,1} \cup \displaystyle\bigcup_{\substack{0\leq i\leq t-1,\\i\neq z}} C_{3\times 2^i}\right) &\quad\text{if }t\geq2, \\
    \varnothing &\quad\text{if }t=1.
\end{cases}$$
\end{definition}
The independence equivalence of $P_2\cup E_{1,1}$ and $P_2 \cup A_{1,1}$ with $P_1\cup C_6$ can be thought of as a sporadic occurrence. As $P_1\cup C_6$ will appear in $Z_{n,t}$ for $n=3\times2^t - 2$ when $t\geq 2$, we have to take them into consideration.
\begin{definition}\label{def:mathcaly23}
Let $n$ be a positive integer such that $n=3\times2^t - 2$ for some positive integer $t$. Define
$$\mathcal{Y}^{(2)}_n = \begin{cases}
    \mathcal{D}\left(P_2\cup E_{1,1} \cup C_3 \cup \displaystyle\bigcup_{i=2}^{t-1} C_{3\times 2^i}\right) &\quad\text{if }t\geq3, \\
    \mathcal{D}\left(P_2\cup E_{1,1} \cup C_3\right) &\quad\text{if }t=2, \\
    \varnothing &\quad\text{if }t=1,
\end{cases}$$
and
$$\mathcal{Y}^{(3)}_n = \begin{cases}
    \mathcal{D}\left(P_2\cup A_{1,1} \cup C_3 \cup \displaystyle\bigcup_{i=2}^{t-1} C_{3\times 2^i}\right) &\quad\text{if }t\geq3, \\
    \mathcal{D}\left(P_2\cup A_{1,1} \cup C_3\right) &\quad\text{if }t=2, \\
    \varnothing &\quad\text{if }t=1.
\end{cases}$$
\end{definition}
Another such sporadic occurrence takes place for $n=3\times2^t - 2$ when $t\geq 3$. In this case, $P_1\cup C_{12}$ appears in $Z_{n,t}$ and it is independence equivalent to $C_4\cup Y_{4,2,2}$ (see Figure~\ref{fig:y422c4}). Since $C_6$ appears in $Z_{n,t}$, and it can be replaced by $(K_4-e) \cup P_2$, we also need to note that $C_4 \cup P_2$ is independence equivalent to $P_6$.

\begin{figure}
\centering
\begin{tikzpicture}
\filldraw(0,1) circle[radius=2pt];
\filldraw(1,0) circle[radius=2pt];
\filldraw(1,1) circle[radius=2pt];
\filldraw(1,2) circle[radius=2pt];
\filldraw(2,0) circle[radius=2pt];
\filldraw(2,1) circle[radius=2pt];
\filldraw(2,2) circle[radius=2pt];
\filldraw(3,2) circle[radius=2pt];
\filldraw(4,2) circle[radius=2pt];
\draw(0,1)--(1,0);
\draw(0,1)--(1,1);
\draw(0,1)--(1,2);
\draw(1,0)--(2,0);
\draw(1,1)--(2,1);
\draw(1,2)--(2,2);
\draw(2,2)--(3,2);
\draw(3,2)--(4,2);
\filldraw(3,0) circle[radius=2pt];
\filldraw(4,0) circle[radius=2pt];
\filldraw(3,1) circle[radius=2pt];
\filldraw(4,1) circle[radius=2pt];
\draw(3,0)--(4,0);
\draw(3,0)--(3,1);
\draw(3,1)--(4,1);
\draw(4,0)--(4,1);
\end{tikzpicture}
\caption{$C_4\cup Y_{4,2,2}$}\label{fig:y422c4}
\end{figure}
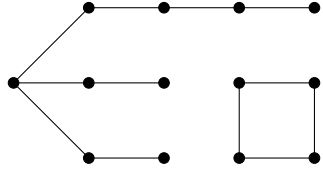

\begin{definition}\label{def:mathcaly45}
Let $n$ be a positive integer such that $n=3\times2^t - 2$ for some positive integer $t$. Define
$$\mathcal{Y}^{(4)}_n = \begin{cases}
    \mathcal{D}\left(C_4\cup Y_{4,2,2} \cup C_3 \cup C_6 \cup \displaystyle\bigcup_{i=3}^{t-1} C_{3\times 2^i}\right) &\quad\text{if }t\geq4, \\
    \mathcal{D}\left(C_4\cup Y_{4,2,2} \cup C_3 \cup C_6\right) &\quad\text{if }t=3,\\
    \varnothing &\quad\text{if }t=1\text{ or }2,
\end{cases}$$
and
$$\mathcal{Y}^{(5)}_n = \begin{cases}
    \mathcal{D}\left((K_4-e) \cup Y_{4,2,2} \cup C_3 \cup P_6 \cup \displaystyle\bigcup_{i=3}^{t-1} C_{3\times 2^i}\right) &\quad\text{if }t\geq4, \\
    \mathcal{D}\left((K_4-e) \cup Y_{4,2,2} \cup C_3 \cup P_6\right) &\quad\text{if }t=3,\\
    \varnothing &\quad\text{if }t=1\text{ or }2.
\end{cases}$$
\end{definition}

This paper will show that the cases mentioned above exhaustively list all the graphs in $\mathcal{I}(P_n)$, as stated below.

\begin{theorem}
\label{prop:pathequiv}
Let $n$ be a positive even integer such that $n+2 = 2^{t} m$ where
$t\geq 1$ and $m$ is odd. Then the independence equivalence class of
$P_{n}$, $\mathcal{I}(P_{n})$ is given by
\begin{equation*}
\mathcal{I}(P_{n})=
\begin{cases}
\mathcal{P}_{n} &\text{ if }m\neq 3,\\
\mathcal{P}_{n} \cup \mathcal{Y}^{(1)}_{n} \cup
\mathcal{Y}^{(2)}_{n} \cup \mathcal{Y}^{(3)}_{n} \cup \mathcal{Y}^{(4)}_{n}
\cup \mathcal{Y}^{(5)}_{n}&\text{ if }m=3.
\end{cases}
\end{equation*}
\end{theorem}

\begin{example}
We have already discussed $\mathcal{I}(P_{10})$. The next case is $\mathcal{I}(P_{22})$. This consists of the union of the following sets:
\begin{itemize}
    \item $\mathcal{P}_{22}$, which itself is the union of:
    \begin{itemize}
        \item $\mathcal{D}(Z_{22,0})=\{P_{22}\}$,
        \item $\mathcal{D}(Z_{22,1})=\mathcal{D}(C_{12}\cup P_{10})$, comprising two graphs ($C_{12}\cup P_{10}$ and $D_{12}\cup P_{10}$),
        \item $\mathcal{D}(Z_{22,2})=\mathcal{D}(C_{12}\cup C_6 \cup P_4)$, comprising six graphs,
        \item $\mathcal{D}(Z_{22,3})=\mathcal{D}(C_{12}\cup C_6 \cup C_3 \cup P_1)$, comprising six graphs,
    \end{itemize}
    \item $\mathcal{Y}^{(1)}_n$, which itself is the union of:
    \begin{itemize}
        \item $\mathcal{D}(Y_{3,2,1}\cup C_3 \cup C_{12})$, comprising two graphs,
        \item $\mathcal{D}(Y_{9,2,1}\cup C_3 \cup C_6)$, comprising three graphs,
    \end{itemize}
    \item $\mathcal{Y}^{(2)}_{22}=\mathcal{D}\left(P_2\cup E_{1,1} \cup C_3\cup C_{12}\right)$, comprising two graphs,
    \item $\mathcal{Y}^{(3)}_{22}=\mathcal{D}\left(P_2\cup A_{1,1} \cup C_3\cup C_{12}\right)$, comprising two graphs,
    \item $\mathcal{Y}^{(4)}_{22}=\mathcal{D}\left(C_4\cup Y_{4,2,2} \cup C_3\cup C_6\right)$, comprising six graphs,
    \item $\mathcal{Y}^{(5)}_{22}=\{(K_4-e) \cup Y_{4,2,2} \cup C_3\cup P_6\}$.
\end{itemize}
Therefore, the cardinality of $\mathcal{I}(P_{22})$ is~$31$.
\end{example}

\section{Factorisation of the Independence Polynomials of $C_n$ and $P_n$}\label{sec:factor}

In this section, we try to factorise $I(P_n,x)$ over $\mathbb{Z}[x]$ to investigate properties of the factors. Because of Proposition~\ref{prop:multi}, we know that if $G\in\mathcal{I}(P_n)$, then the independence polynomial of any connected component of $G$ would be a product of some of the factors of $I(P_n,x)$. Due to Proposition~\ref{prop:zhang}, we know that part of the problem involves factorising $I(C_n,x)$ as well, a problem that was solved in Ng~\cite{ng}. However, to make this section self-contained, we address the factorisations of $I(C_n,x)$ and $I(P_n,x)$ together.

The roots of the independence polynomials of cycle graphs and paths have been completely determined by Alikhani and Peng~\cite{alikhani}.
\begin{prop}\cite{alikhani}\label{prop:cyclicroot}
The roots of $I(C_n,x)$ are
$$c_i = -\frac{1}{2+2\cos\left(\frac{(2i-1)\pi}{n}\right)}$$
for $i = 1,2,\ldots,\left\lfloor\frac{n}{2}\right\rfloor$.
\end{prop}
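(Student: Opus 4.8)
The plan is to obtain a closed form for $I(C_n,x)$ from the vertex-deletion recurrence and then simply read off the roots. First, applying Proposition~\ref{prop:delvert} to any vertex of $C_n$ gives $I(C_n,x)=I(P_{n-1},x)+xI(P_{n-3},x)$, and applied to an end vertex of a path it gives the familiar recurrence $I(P_m,x)=I(P_{m-1},x)+xI(P_{m-2},x)$ for $m\geq 2$. A short computation combining these shows that $f_n:=I(C_n,x)$ satisfies $f_n=f_{n-1}+xf_{n-2}$ for $n\geq 5$, with $I(C_3,x)=1+3x$ and $I(C_4,x)=1+4x+2x^2$. The characteristic equation $t^2-t-x=0$ has roots $t_\pm=\tfrac12\bigl(1\pm\sqrt{1+4x}\bigr)$, which satisfy $t_++t_-=1$ and $t_+t_-=-x$; since $t_+^{\,n}+t_-^{\,n}$ is a symmetric function of $t_+,t_-$ it is a polynomial in $x$, it obeys the same recurrence (now for all $n\geq 2$), and it matches $f_3$ and $f_4$, so $I(C_n,x)=t_+^{\,n}+t_-^{\,n}$ for every $n\geq 3$.

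Next I would solve $I(C_n,x)=0$. Since $I(C_n,0)=1\neq 0$ we may assume $x\neq 0$, hence $t_-\neq 0$, and the equation becomes $z^n=-1$ where $z:=t_+/t_-$; so $z=e^{i(2k-1)\pi/n}$ for some integer $k$. From $z^n=-1$ we get $z\neq 1$, hence $t_+\neq t_-$, and $t_\pm$ are recovered from $z$ via $t_+=zt_-$ and $t_++t_-=1$: this gives $t_-=1/(z+1)$ and therefore $x=-t_+t_-=-z/(z+1)^2$, valid whenever $z\neq-1$. The value $z=-1$ cannot genuinely occur, since $z=-1$ would force $t_++t_-=0\neq1$; it merely appears as a spurious solution of $z^n=-1$ when $n$ is odd and must be discarded.

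To finish, I would simplify $x=-z/(z+1)^2$ for $z=e^{i\theta}$, $\theta=(2k-1)\pi/n$. Writing $z+1=e^{i\theta/2}\cdot 2\cos(\theta/2)$ gives $(z+1)^2=z\cdot 4\cos^2(\theta/2)$, so $x=-1/\bigl(4\cos^2(\theta/2)\bigr)=-1/\bigl(2+2\cos\theta\bigr)$, which is real, as it must be. Because $\cos\theta$ is invariant under $\theta\mapsto 2\pi-\theta$, the arguments $\theta_k=(2k-1)\pi/n$ and $\theta_{n+1-k}$ produce the same value of $x$, which is exactly the pairing of complex-conjugate solutions $z,\bar z$. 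Discarding the spurious $z=-1$ (which is the case $k=(n+1)/2$, occurring only for odd $n$), the surviving distinct values are precisely $c_i=-1/\bigl(2+2\cos((2i-1)\pi/n)\bigr)$ for $i=1,\ldots,\lfloor n/2\rfloor$; they are pairwise distinct because $\cos$ is injective on $(0,\pi)$. Since $\deg I(C_n,x)=\alpha(C_n)=\lfloor n/2\rfloor$ and we have exhibited $\lfloor n/2\rfloor$ distinct roots, these are all the roots (each simple).

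The step I expect to be the main obstacle is the bookkeeping in this last paragraph: checking that distinct indices $i$ really give distinct $c_i$, that the only solution of $z^n=-1$ that fails to yield a root is the spurious $z=-1$ (and that this happens exactly when $n$ is odd), and that the resulting count of distinct roots matches $\deg I(C_n,x)$ so that completeness is automatic. By comparison, deriving the closed form $I(C_n,x)=t_+^{\,n}+t_-^{\,n}$ is routine once the recurrence and the two base cases $n=3,4$ are written down.
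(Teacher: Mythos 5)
Your proof is correct. Note, however, that the paper itself gives no proof of this proposition: it is quoted with a citation to Alikhani and Peng, so there is no in-paper argument to compare yours against. Your route is the standard one for such transfer-matrix-style results: establish $I(C_n,x)=I(C_{n-1},x)+xI(C_{n-2},x)$ from the vertex-deletion recurrences, deduce the Binet-type closed form $I(C_n,x)=t_+^{\,n}+t_-^{\,n}$ with $t_\pm=\tfrac12\bigl(1\pm\sqrt{1+4x}\bigr)$ (this is the Lucas-polynomial identity), and then solve $z^n=-1$ for $z=t_+/t_-$. All the delicate points are handled: the exclusion of $x=0$, the spurious solution $z=-1$ for odd $n$ (which would contradict $t_++t_-=1$), the conjugate pairing $\theta\leftrightarrow 2\pi-\theta$ that cuts the $n$ solutions of $z^n=-1$ down to $\lfloor n/2\rfloor$ distinct real values of $x$, the injectivity of $\cos$ on $(0,\pi)$ giving distinctness, and the match with $\deg I(C_n,x)=\alpha(C_n)=\lfloor n/2\rfloor$ ensuring completeness and simplicity of the roots. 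The small base-case checks ($I(C_3,x)=1+3x$, $I(C_4,x)=1+4x+2x^2$, and the recurrence already holding at $n=5$) all go through. This is a self-contained and correct derivation of the cited result.
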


\begin{prop}\cite{alikhani}\label{prop:pathroot}
The roots of $I(P_n,x)$ are
$$p_i = -\frac{1}{2+2\cos\left(\frac{2i\pi}{n+2}\right)}$$
for $i = 1,2,\ldots,\left\lfloor\frac{n+1}{2}\right\rfloor$.
\end{prop}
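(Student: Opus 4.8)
The plan is to convert $I(P_n,x)$ into an explicit sequence via a two-term recurrence, solve that recurrence in closed form, and then read off the zeros. \textbf{Step 1 (a recurrence).} Apply Proposition~\ref{prop:delvert} to an endpoint $u$ of $P_n$: then $P_n-u\cong P_{n-1}$ and $P_n-N[u]\cong P_{n-2}$, so $I(P_n,x)=I(P_{n-1},x)+x\,I(P_{n-2},x)$ for $n\ge 2$, with initial data $I(P_0,x)=1$ and $I(P_1,x)=1+x$ (reading $P_0$ as the empty graph).

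\textbf{Step 2 (closed form).} The characteristic equation $t^2-t-x=0$ has roots $t_\pm$ with $t_++t_-=1$, $t_+t_-=-x$, and $\delta:=t_+-t_-$ satisfying $\delta^2=1+4x$. Writing $I(P_n,x)=At_+^n+Bt_-^n$, solving for $A,B$ from the initial data, and simplifying using $(\delta+1)^2=4t_+^2$ and $(\delta-1)^2=4t_-^2$, I expect to arrive at the compact formula
$$I(P_n,x)=\frac{t_+^{\,n+2}-t_-^{\,n+2}}{t_+-t_-},$$
which is visibly a polynomial in $t_++t_-=1$ and $t_+t_-=-x$, hence in $x$. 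The degenerate case $\delta=0$, i.e. $x=-\tfrac14$, is covered by passing to the limit, where the right-hand side becomes $(n+2)2^{-(n+1)}\ne 0$; so $-\tfrac14$ is not a root.

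\textbf{Step 3 (the zeros).} For $x\ne-\tfrac14$ the formula vanishes precisely when $(t_+/t_-)^{n+2}=1$ with $t_+/t_-\ne 1$, i.e. when $t_+/t_-=\zeta:=e^{2\pi i k/(n+2)}$ for some $k\in\{1,\dots,n+1\}$. Together with $t_++t_-=1$ this forces $t_-=\tfrac{1}{1+\zeta}$ and $t_+=\tfrac{\zeta}{1+\zeta}$, so
$$-x=t_+t_-=\frac{\zeta}{(1+\zeta)^2}=\frac{1}{2+2\cos\!\big(\tfrac{2k\pi}{n+2}\big)},$$
using $1+e^{i\beta}=2\cos(\beta/2)e^{i\beta/2}$ and $2\cos^2(\beta/2)=1+\cos\beta$. \textbf{Step 4 (counting).} Since $\cos\tfrac{2k\pi}{n+2}=\cos\tfrac{2(n+2-k)\pi}{n+2}$, the parameters $k$ and $n+2-k$ give the same $x$; and the value $\beta=\pi$ — possible only for even $n$, at $k=\tfrac{n+2}{2}$ — is spurious, since there $1+\zeta=0$ and no such $t_\pm$ exist. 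Hence the distinct roots are exactly those indexed by $k=1,\dots,\big\lfloor\tfrac{n+1}{2}\big\rfloor$ (for which $\tfrac{2k\pi}{n+2}\in(0,\pi)$, where $\cos$ is injective). Because $\alpha(P_n)=\big\lceil\tfrac n2\big\rceil=\big\lfloor\tfrac{n+1}{2}\big\rfloor$ equals $\deg I(P_n,x)$, this list accounts for all roots (each simple), which is the asserted formula after renaming $k$ as $i$.

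The step that I expect to need the most care is the counting in Step~4: one must verify that the $\zeta=-1$ degeneracy genuinely contributes no root and that the involution $k\leftrightarrow n+2-k$ leaves exactly $\lfloor(n+1)/2\rfloor$ distinct cosine values, so that the tally matches $\deg I(P_n,x)$ and no root is missed or double-counted. The manipulations in Steps~2 and~3 are routine algebra and trigonometry.
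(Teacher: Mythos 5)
Your proposal is correct and complete. Note that the paper itself offers no proof of this statement: it is quoted from Alikhani and Peng, so there is no in-paper argument to compare against. Your derivation is the standard one for this result: the endpoint recurrence $I(P_n,x)=I(P_{n-1},x)+xI(P_{n-2},x)$ is exactly what Proposition~\ref{prop:delvert} gives, the closed form $I(P_n,x)=\frac{t_+^{\,n+2}-t_-^{\,n+2}}{t_+-t_-}$ checks against the initial data $I(P_0,x)=1$, $I(P_1,x)=1+x$ and the recurrence (it is the Chebyshev-type solution), and your Step~4 tally of $\big\lfloor\tfrac{n+1}{2}\big\rfloor$ distinct values against $\deg I(P_n,x)=\alpha(P_n)=\big\lceil\tfrac{n}{2}\big\rceil$ correctly closes the argument, including the exclusion of $\zeta=-1$ and of the confluent case $x=-\tfrac14$. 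The only point worth making explicit is that forming the ratio $t_+/t_-$ requires $t_-\neq 0$; since $t_+t_-=-x$ and $I(P_n,0)=1\neq 0$, no root has $x=0$, so this is harmless. As a side benefit, your limit computation at $x=-\tfrac14$ yields $(n+2)2^{-(n+1)}$, which is consistent with the independence-equivalence structure used later in the paper (and suggests the exponent in the paper's Proposition~\ref{prop:recur2} formula for $I(P_n,-\tfrac14)$ should be read as $2^{n+1}$ in the denominator).
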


It can be seen that if $c_i$ is a root of $I(C_n,x)$ then it is also a root $p_{2i-1}$ of $I(P_{2n-2},x)$. The other roots of $I(P_{2n-2},x)$ are of the roots of $I(P_{n-2},x)$. This provides an alternative proof of Proposition~\ref{prop:zhang}.

The minimal polynomials of $\cos(2\pi k / n)$ have been previously determined by Lehmer~\cite{lehmer} and Watkins and Zeitlin~\cite{watkins}.

\begin{prop}\cite{lehmer}\label{prop:cosroot1}
If $\gcd(k,n)=1$ and $n\geq 3$ then the minimal polynomial of $2\cos(2\pi k / n)$ has degree $\phi(n)/2$ and leading coefficient $1$, where $\phi(n)$ is Euler's totient function.
\end{prop}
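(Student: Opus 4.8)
The plan is to realise $2\cos(2\pi k/n)$ inside a cyclotomic field and invoke Galois theory. Write $\zeta = e^{2\pi i k/n}$; since $\gcd(k,n)=1$, $\zeta$ is a primitive $n$-th root of unity, and $\alpha := 2\cos(2\pi k/n) = \zeta + \zeta^{-1}$. First I would recall the two standard inputs. The extension $\mathbb{Q}(\zeta)/\mathbb{Q}$ is Galois of degree $\phi(n)$ with group $G\cong(\mathbb{Z}/n\mathbb{Z})^\times$, where $\sigma_a$ acts by $\zeta\mapsto\zeta^a$; this rests on Gauss's irreducibility of the cyclotomic polynomial $\Phi_n$. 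And $\zeta$, being a root of $X^n-1$, is an algebraic integer, hence so is $\alpha = \zeta+\zeta^{-1} = \zeta+\zeta^{n-1}$.

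Next I would identify $\mathbb{Q}(\alpha)$ via the Galois correspondence as the fixed field of the subgroup $H = \{\sigma\in G : \sigma(\alpha)=\alpha\}$. Complex conjugation $\sigma_{-1}$ fixes $\alpha$, so $\sigma_{-1}\in H$. For the reverse inclusion, suppose $\sigma_a(\alpha)=\alpha$, i.e. $\zeta^a+\zeta^{-a} = \zeta+\zeta^{-1}$; multiplying by $\zeta^a$ and rearranging yields $(\zeta^{a-1}-1)(\zeta^{a+1}-1)=0$, and since $\zeta$ has order exactly $n$ this forces $a\equiv 1$ or $a\equiv -1\pmod n$. Hence $H=\{\sigma_1,\sigma_{-1}\}$, which has order exactly $2$ because $n\geq 3$ guarantees $-1\not\equiv 1\pmod n$. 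Therefore $[\mathbb{Q}(\alpha):\mathbb{Q}] = |G|/|H| = \phi(n)/2$, so the minimal polynomial of $\alpha$ over $\mathbb{Q}$ has degree $\phi(n)/2$.

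Finally, for the statement about the leading coefficient: the minimal polynomial over $\mathbb{Q}$ is monic by convention, and because $\alpha$ is an algebraic integer this monic polynomial in fact lies in $\mathbb{Z}[x]$ — which is the form in which the result will be applied when factorising $I(C_n,x)$ and $I(P_n,x)$ over $\mathbb{Z}[x]$. The only ingredient here that is not a short computation is the irreducibility of $\Phi_n$ (equivalently $[\mathbb{Q}(\zeta):\mathbb{Q}]=\phi(n)$). The step I expect to require the most care is the factorisation showing $\sigma_a(\alpha)=\alpha\implies a\equiv\pm1\pmod n$: equivalently, that the $\phi(n)/2$ pairs $\{j,-j\}\subseteq(\mathbb{Z}/n\mathbb{Z})^\times$ give genuinely distinct values $\zeta^j+\zeta^{-j}$, which is exactly what pins the degree down to $\phi(n)/2$ rather than to a proper divisor of it.
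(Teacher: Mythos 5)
Your argument is correct and complete: the identification of $2\cos(2\pi k/n)$ with $\zeta+\zeta^{-1}$, the computation of the stabiliser via the factorisation $(\zeta^{a-1}-1)(\zeta^{a+1}-1)=0$, and the integrality argument for the leading coefficient are all sound, with $n\geq 3$ correctly invoked to get $|H|=2$. The paper itself offers no proof of this statement --- it is imported from Lehmer, whose argument likewise rests on the irreducibility of the cyclotomic polynomial and the substitution $x\mapsto x+x^{-1}$ --- so your Galois-theoretic write-up is essentially the standard route to the cited result.
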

Watkins and Zeitlin~\cite{watkins} give an explicit construction for this minimal polynomial in terms of Chebychev polynomials. 
\begin{prop}\cite{watkins}\label{prop:cosroot2}
The roots of the minimal polynomial of $2\cos(2\pi / n)$ are precisely those values of $2\cos(2\pi k / n)$ for which $\gcd(k,n)=1$ and $1\leq k<n/2$.
\end{prop}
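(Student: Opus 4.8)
The plan is to read everything off the Galois action on the $n$th cyclotomic field (I will assume $n\geq 3$, the cases $n\leq 2$ being degenerate). Set $\zeta=e^{2\pi i/n}$, a primitive $n$th root of unity, so that $2\cos(2\pi k/n)=\zeta^{k}+\zeta^{-k}$ for every integer $k$, and in particular $\alpha:=2\cos(2\pi/n)=\zeta+\zeta^{-1}$. First I would invoke the standard theory of cyclotomic extensions (irreducibility of $\Phi_n$): $\mathbb{Q}(\zeta)/\mathbb{Q}$ is Galois of degree $\phi(n)$ with Galois group $\{\sigma_k:\gcd(k,n)=1\}$, where $\sigma_k(\zeta)=\zeta^{k}$. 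Since $\mathbb{Q}(\zeta)$ is Galois over $\mathbb{Q}$ and contains $\alpha$, the roots of the minimal polynomial $m(x)$ of $\alpha$ are exactly the orbit of $\alpha$ under this group, i.e.\ the numbers $\sigma_k(\alpha)=\zeta^{k}+\zeta^{-k}=2\cos(2\pi k/n)$ with $\gcd(k,n)=1$; moreover $m(x)=\prod(x-\beta)$ over the \emph{distinct} values $\beta$ among these (the minimal polynomial being separable).

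Next I would count and identify those distinct values. The value $\sigma_k(\alpha)$ depends only on the pair $\{k,n-k\}\bmod n$, and since $x\mapsto\cos x$ is strictly monotone on $(0,\pi)$, which contains $2\pi k/n$ for $1\leq k<n/2$, the numbers $2\cos(2\pi k/n)$ with $\gcd(k,n)=1$ and $1\leq k<n/2$ are pairwise distinct; and every $\sigma_k(\alpha)$ equals one of them. The residues coprime to $n$ pair up under $k\leftrightarrow n-k$ with no fixed point (a fixed point needs $2k=n$, forcing $\gcd(k,n)=n/2>1$ for $n\geq 3$), so there are exactly $\phi(n)/2$ of these values. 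Hence $m(x)=\prod_{\gcd(k,n)=1,\ 1\leq k<n/2}\bigl(x-2\cos(2\pi k/n)\bigr)$, provided one checks that no further collapsing occurs, i.e.\ that $\deg m=\phi(n)/2$. This last point is exactly Proposition~\ref{prop:cosroot1}; alternatively it follows by computing the stabiliser of $\alpha$: if $\sigma_k(\alpha)=\alpha$ then $\{\zeta^{k},\zeta^{-k}\}$ and $\{\zeta,\zeta^{-1}\}$ are both the full root set of the quadratic $T^{2}-\alpha T+1$, forcing $k\equiv\pm1\pmod n$, so the orbit of $\alpha$ has size $\phi(n)/2$.

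Combining these observations yields the stated characterisation of the roots. The part I expect to need the most care is the degree bookkeeping: confirming that the $\phi(n)/2$ candidate roots are genuinely distinct (handled by monotonicity of cosine) and that $m$ is not a proper divisor of their product (handled by the stabiliser computation, or simply by citing Proposition~\ref{prop:cosroot1}); everything else is a direct unwinding of the cyclotomic Galois action. If one wants the explicit Chebyshev-polynomial expression of Watkins and Zeitlin, it can then be recovered by expanding $\prod_k\bigl(x-2\cos(2\pi k/n)\bigr)$ via the standard identity writing such products of factors $x-2\cos\theta$ in terms of Chebyshev polynomials, but that explicit form is not needed for the use made of this proposition in the sequel.
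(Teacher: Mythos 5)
The paper offers no proof of this proposition at all: it is quoted verbatim from Watkins and Zeitlin~\cite{watkins}, so there is nothing internal to compare your argument against. Judged on its own, your Galois-theoretic proof is correct and complete. The key steps all hold: $\mathbb{Q}(\zeta)/\mathbb{Q}$ is Galois with group $\{\sigma_k:\gcd(k,n)=1\}$, so the roots of the minimal polynomial of $\alpha=\zeta+\zeta^{-1}$ are exactly the distinct values $\sigma_k(\alpha)=2\cos(2\pi k/n)$; every such value equals $2\cos(2\pi k'/n)$ for the representative $k'\in\{k,n-k\}$ with $k'<n/2$ (and $k'=n/2$ is impossible for $n\geq 3$ since it forces $\gcd(k,n)>1$); and strict monotonicity of cosine on $(0,\pi)$ shows these $\phi(n)/2$ representatives give pairwise distinct values, so the orbit is precisely the claimed set. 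Your stabiliser computation via the quadratic $T^2-\alpha T+1$ is a nice self-contained way to get the degree $\phi(n)/2$ without leaning on Proposition~\ref{prop:cosroot1}, which keeps the argument non-circular. This differs in flavour from the route in the cited source, which works more elementarily by factoring Chebyshev-type polynomials whose roots are the $2\cos(2\pi k/n)$ and extracting the irreducible factor through an inclusion–exclusion over divisors; that approach buys the explicit Chebyshev formula for the minimal polynomial, while yours is shorter and suffices for everything the present paper actually uses. The only caveat worth recording is your standing assumption $n\geq 3$, which is harmless here since the paper only ever invokes the proposition with $n\geq 3$ (or with $2n\geq 4$ in the proof of the factorisation of $f_n$ and $\tilde{f}_n$).
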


In~\cite{ng}, this was used to find the minimal polynomials of $c_i$ for odd $n$. Working along very similar methods, we can find the minimal polynomials of $c_i$ and $p_i$ for general $n$.

Since $c_i$ and $p_i$ above have similar algebraic forms, we make the following definition.
\begin{definition}\label{def:fn}
For any positive integer $n$, and any positive integer $k$ such that $1\leq k\leq n-1$, define
$$\phi_{k,n}=-\frac{1}{2+2\cos\left(\frac{k\pi}{n}\right)}.$$
\end{definition}

We now define one sequence of polynomials $f_n(x)$ for $n\geq 1$, and another sequence of polynomials $\tilde{f}_n(x)$ for odd $n\geq 3$.
\begin{definition}
For any positive integer $n$, define $f_n(x)$ to be
\begin{itemize}
    \item $f_1(x)=1$,
    \item the minimal polynomial over $\mathbb{Z}[x]$ of $\phi_{1,n}$ for $n\geq 2$.
\end{itemize}
For any odd positive integer $n\geq 3$, define $\tilde{f}_n(x)$ to be
\begin{itemize}
    \item $\tilde{f}_1(x)=1$,
    \item the minimal polynomial over $\mathbb{Z}[x]$ of $\phi_{2,n}$ for $n\geq 3$.
\end{itemize}
\end{definition}

\begin{prop}
The roots of $f_n(x)$ are $\phi_{2k-1,n}$, where $\gcd(2k-1,n)=1$ and $1\leq k \leq n/2$. The roots of $\tilde{f}_n(x)$ are $\phi_{2k,n}$, where $\gcd(k,n)=1$ and $1\leq k < n/2$.
\end{prop}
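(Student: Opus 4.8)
The overall plan is to transfer the known description of the minimal polynomial of $2\cos(2\pi/m)$ (Propositions~\ref{prop:cosroot1} and~\ref{prop:cosroot2}) across the M\"obius substitution that converts $2\cos$ into the quantities $\phi_{\cdot,\cdot}$, following the method used for odd $n$ in~\cite{ng}. Set $y_0=2\cos(\pi/n)=2\cos\!\left(\tfrac{2\pi}{2n}\right)$ and let $g$ be its minimal polynomial over $\mathbb{Z}[x]$. For $n\geq 2$ we have $2n\geq 3$ and $\gcd(1,2n)=1$, so Proposition~\ref{prop:cosroot1} gives that $g$ is monic of degree $\phi(2n)/2$ and Proposition~\ref{prop:cosroot2} gives that the roots of $g$ are exactly the numbers $2\cos(k\pi/n)$ with $\gcd(k,2n)=1$ and $1\leq k<n$; the case $n=1$ is immediate from the definition. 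Let $t(y)=-\tfrac{1}{2+y}$, a M\"obius transformation defined over $\mathbb{Q}$ with inverse $x\mapsto -2-\tfrac{1}{x}$, and note that $\phi_{1,n}=t(y_0)\neq 0$. Since $t$ and $t^{-1}$ have rational coefficients, $\mathbb{Q}(\phi_{1,n})=\mathbb{Q}(y_0)$, and hence $\deg f_n=[\mathbb{Q}(\phi_{1,n}):\mathbb{Q}]=[\mathbb{Q}(y_0):\mathbb{Q}]=\deg g=\phi(2n)/2$.

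Next I would push the substitution down to the level of polynomials. Write $g(y)=\sum_{j=0}^{d}a_j y^{j}$ with $d=\deg g$ and $a_d=1$, and set
\[
g^{\ast}(x)=x^{d}\,g\!\left(-2-\tfrac{1}{x}\right)=\sum_{j=0}^{d}a_j(-2x-1)^{j}x^{d-j} \in \mathbb{Z}[x].
\]
The coefficient of $x^{d}$ in $g^{\ast}$ equals $\sum_{j=0}^{d}a_j(-2)^{j}=g(-2)$, which is nonzero because every root of $g$ lies strictly between $-2$ and $2$; hence $\deg g^{\ast}=d$. Also $g^{\ast}(0)=(-1)^{d}a_d\neq 0$, so $0$ is not a root of $g^{\ast}$, and for $x\neq 0$ we have $g^{\ast}(x)=0$ if and only if $-2-\tfrac1x$ is a root of $g$, i.e.\ if and only if $x=t(y)$ for a root $y$ of $g$. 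Therefore the roots of $g^{\ast}$ are exactly the numbers $t(2\cos(k\pi/n))=\phi_{k,n}$ with $\gcd(k,2n)=1$ and $1\leq k<n$, and they are pairwise distinct since $g$ is separable and $t$ is injective. Finally $\phi_{1,n}$ is a root of $g^{\ast}$, so $f_n\mid g^{\ast}$ in $\mathbb{Q}[x]$, and comparing degrees ($\deg f_n=d=\deg g^{\ast}$) shows $f_n$ and $g^{\ast}$ agree up to a nonzero rational factor. Hence $f_n$ has precisely the roots listed above.

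What is left is the re-indexing. For an integer $k$, $\gcd(k,2n)=1$ holds exactly when $k$ is odd and $\gcd(k,n)=1$; writing $k=2k'-1$, the inequality $2k'-1<n$ is equivalent to $2k'\leq n$, i.e.\ $k'\leq n/2$, so $1\leq 2k'-1<n$ becomes $1\leq k'\leq n/2$. Thus the root set of $f_n$ is $\{\phi_{2k'-1,n}:\gcd(2k'-1,n)=1,\ 1\leq k'\leq n/2\}$, as claimed. The assertion about $\tilde f_n$ is proved identically, starting from $y_0=2\cos(2\pi/n)$ for odd $n\geq 3$: Propositions~\ref{prop:cosroot1} and~\ref{prop:cosroot2} now give a minimal polynomial of degree $\phi(n)/2$ whose roots are $2\cos(2\pi k/n)$ with $\gcd(k,n)=1$ and $1\leq k<n/2$, and since $\phi_{2,n}=t(2\cos(2\pi/n))$ the same substitution $g^{\ast}(x)=x^{d}g(-2-\tfrac1x)$ shows that the roots of $\tilde f_n$ are $t(2\cos(2\pi k/n))=\phi_{2k,n}$ over the same index set (the case $n=1$ being trivial).

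I expect the only real friction to be bookkeeping rather than conceptual content: translating the condition $\gcd(k,2n)=1$ together with $1\leq k<n$ into the stated index range for $f_n$ and handling the endpoints for $n$ even versus odd, and disposing of the degenerate small cases $n=1,2$. The one genuinely substantive point is the verification that the substitution preserves the degree, which is precisely the observation $g(-2)\neq 0$ — true because the roots of $g$ are values of $2\cos$ and hence lie in the open interval $(-2,2)$.
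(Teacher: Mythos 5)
Your proposal is correct and follows essentially the same route as the paper: both transfer Lehmer's and Watkins--Zeitlin's description of the minimal polynomial of $2\cos(\pi/n)$ (resp.\ $2\cos(2\pi/n)$) through the substitution $y\mapsto -1/(2+y)$ and then reindex the root set. The paper realises this substitution as a translation $g(x-2)$ followed by a coefficient reversal and invokes preservation of irreducibility, whereas you do it in one Möbius step and pin down the degree via $\mathbb{Q}(\phi_{1,n})=\mathbb{Q}(y_0)$ together with the observation $g(-2)\neq 0$ --- a cosmetic difference only.
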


\begin{proof}
For some positive integer $n$, let $g(x)$ be the minimal polynomial of $2\cos(\pi/n)=2\cos((2\pi)/(2n))$. By Proposition~\ref{prop:cosroot2}, the roots of $g(x)$ are those values of $2\cos((2j\pi)/(2n))$, for which $\gcd(j,2n)=1$ and $1\leq j < n$. Now $\gcd(j,2n)=1$ if and only if $j$ is odd and $\gcd(j,n)=1$. Therefore, we can let $j=2k-1$ where $1\leq k\leq n/2$.

We now have
$$g(x)=\prod_{\substack{\gcd(2k-1,n)=1,\\1\leq k \leq \frac{n}{2}}} \left(x-2\cos\left(\frac{(2k-1)\pi}{n}\right)\right).$$
From Proposition~\ref{prop:cosroot1}, $g(x)$ has degree $d=\phi(2n)/2$ and leading coefficient 1. Let
$$g(x) = \sum_{t=0}^d a_t x^t.$$
Now, we can translate $g(x)$ along the $x$-axis to obtain another irreducible polynomial $h(x)$ with integer coefficients and degree $d$, whose roots are $2\cos((2k-1)\pi/n) + 2$.
$$g(x-2) = \sum_{t=0}^d a_t (x-2)^t = \sum_{t=0}^d b_t x^t = h(x).$$
Since $2\cos((2k-1)\pi/n) + 2 = -1/\phi_{2k-1,n}$, for any $k$ such that $gcd(2k-1,n)=1$ and $1\leq k \leq n/2$,
\begin{eqnarray*}
0 & = & \sum_{t=0}^d a_t \left(2\cos\left(\frac{(2k-1)\pi}{n}\right)\right)^t \\
& = & \sum_{t=0}^d b_t \left(2\cos\left(\frac{(2k-1)\pi}{n}\right) + 2\right)^t \\
& = & \sum_{t=0}^d b_t \left(-\frac{1}{\phi_{2k-1,n}}\right)^t \\
& = & \left(-\frac{1}{\phi_{2k-1,n}}\right)^d \sum_{t=0}^d b_t (-\phi_{2k-1,n})^{d-t}.
\end{eqnarray*}
and therefore the $\phi_{2k-1,n}$ are roots of a polynomial
$$f(x) = \sum_{t=0}^d b_t (-x)^{d-t}.$$
Since $h(x)$ is irreducible, $f(x)$ is irreducible as well, and has degree $d$ and integer coefficients.

The roots of $f(x)$ are precisely the values of $\phi_{k,n}$ where $\gcd(2k-1,n)=1$ and $1\leq k \leq \frac{n}{2}$.

Similarly, for some odd positive integer $n\geq 3$, let $\tilde{g}(x)$ be the minimal polynomial of $2\cos(2\pi/n)$. By Proposition~\ref{prop:cosroot2}, the roots of $\tilde{g}(x)$ are $2\cos(2k\pi/n)$, for which $\gcd(k,n)=1$ and $1\leq k < n/2$.

We now have
$$\tilde{g}(x)=\prod_{\substack{\gcd(k,n)=1,\\1\leq k < \frac{n}{2}}} \left(x-2\cos\left(\frac{2k\pi}{n}\right)\right).$$
From Proposition~\ref{prop:cosroot1}, $g(x)$ has degree $\tilde{d}=\phi(n)/2$ and leading coefficient 1. Let
$$\tilde{g}(x) = \sum_{t=0}^{\tilde{d}} \tilde{a}_t x^t.$$
Now, we can translate $\tilde{g}(x)$ along the $x$-axis to obtain another irreducible polynomial $\tilde{h}(x)$ with integer coefficients and degree $\tilde{d}$, whose roots are $2\cos(2k\pi/n) + 2$.
$$\tilde{g}(x-2) = \sum_{t=0}^{\tilde{d}} \tilde{a}_t (x-2)^t = \sum_{t=0}^{\tilde{d}} \tilde{b}_t x^t = h(x).$$
Since $2\cos(2k\pi/n) + 2 = -1/\phi_{2k,n}$,
\begin{eqnarray*}
0 & = & \sum_{t=0}^{\tilde{d}} \tilde{a}_t \left(2\cos\left(\frac{2k\pi}{n}\right)\right)^t \\
& = & \sum_{t=0}^{\tilde{d}} \tilde{b}_t \left(2\cos\left(\frac{2k\pi}{n}\right) + 2\right)^t \\
& = & \sum_{t=0}^{\tilde{d}} \tilde{b}_t \left(-\frac{1}{\phi_{2k,n}}\right)^t \\
& = & \left(-\frac{1}{\phi_{2k,n}}\right)^{\tilde{d}} \sum_{t=0}^{\tilde{d}} \tilde{b}_t (-\phi_{2k,n})^{\tilde{d}-t}.
\end{eqnarray*}
and therefore the $\phi_{2k,n}$ are roots of a polynomial
$$ \tilde{f}(x) = \sum_{t=0}^{\tilde{d}} \tilde{b}_t (-x)^{\tilde{d}-t}. $$
Since $\tilde{h}(x)$ is irreducible, $\tilde{f}(x)$ is irreducible as well, and has degree $\tilde{d}$ and integer coefficients.

The roots of $\tilde{f}(x)$ are precisely the values of $\phi_{2k,n}$ where $\gcd(k,n)=1$ and $1\leq k < \frac{n}{2}$.
\end{proof}

\begin{corollary}\label{cor:unique}
For each $\phi_{k,n}$, where $1\leq k\leq n-1$, the monomial $(x-\phi_{k,n})$ is a factor of exactly one of the $f_n(x)$ or one of the $\tilde{f}_n(x)$ (but not both).
\end{corollary}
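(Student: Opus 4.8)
The plan is to reduce the statement to two elementary facts: $\cos$ is injective on the open interval $(0,\pi)$, and every rational number in $(0,1)$ has a unique representation in lowest terms. I would begin with two preliminary remarks. Since $1\le k\le m-1$, the argument $k\pi/m$ lies strictly between $0$ and $\pi$, so $2+2\cos(k\pi/m)\in(0,4)$ and $\phi_{k,m}$ is a well-defined (negative, real) number. Also, for any $p(x)\in\mathbb{Q}[x]$, the monomial $(x-\phi_{k,m})$ is a factor of $p$ if and only if $p(\phi_{k,m})=0$. Hence the corollary is equivalent to the assertion that $\phi_{k,m}$ is a root of exactly one polynomial from the combined list consisting of all the $f_n$ together with all the $\tilde f_n$.

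For existence, write $k/m=a/b$ in lowest terms. Then $\cos(k\pi/m)=\cos(a\pi/b)$, so $\phi_{k,m}=\phi_{a,b}$, where now $\gcd(a,b)=1$ and, since $0<a/b<1$, also $1\le a\le b-1$. I would split on the parity of $a$. If $a$ is odd, set $a=2\ell-1$; then $1\le\ell\le b/2$ and $\gcd(2\ell-1,b)=1$, so by the proposition just proved $\phi_{a,b}$ is a root of $f_b$. If $a$ is even, then $\gcd(a,b)=1$ forces $b$ odd with $b\ge 3$; setting $a=2\ell$ one has $1\le\ell<b/2$ and, since $b$ is odd, $\gcd(\ell,b)=\gcd(2\ell,b)=1$, so $\phi_{a,b}$ is a root of $\tilde f_b$. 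In either case we have exhibited one polynomial on the list admitting $(x-\phi_{k,m})$ as a factor.

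For uniqueness, suppose $\phi_{k,m}$ is a root of some $f_n$. By the proposition its roots are of the form $\phi_{j,n}$ with $j$ odd, $\gcd(j,n)=1$ and $1\le j\le n-1$, so $\phi_{k,m}=\phi_{j,n}$ for such a $j$, whence $\cos(k\pi/m)=\cos(j\pi/n)$. Both arguments lie in $(0,\pi)$, where $\cos$ is injective, so $k/m=j/n$; comparing this with the reduced form $a/b$ forces $j=a$ and $n=b$. Thus $(x-\phi_{k,m})$ divides no $f_n$ other than $f_b$, and by the existence argument this occurs exactly when $a$ is odd. Running the same argument against the $\tilde f_n$ shows $(x-\phi_{k,m})$ divides no $\tilde f_n$ other than $\tilde f_b$, and this occurs exactly when $a$ is even (equivalently $b$ odd). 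Since $a$ has a single parity, $(x-\phi_{k,m})$ is a factor of precisely one polynomial on the list, which proves the corollary.

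The only genuine difficulty is clerical: one must check that the index bounds $1\le\ell\le b/2$ (resp.\ $1\le\ell<b/2$) and the coprimality hypotheses appearing in the preceding proposition translate correctly under passage to the reduced fraction $a/b$, including the small-case and boundary checks that make $f_b$ and $\tilde f_b$ the relevant (non-constant) polynomials. Once this bookkeeping is done, injectivity of $\cos$ on $(0,\pi)$ and uniqueness of the lowest-terms representation do everything else.
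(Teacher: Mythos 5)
Your proof is correct and follows essentially the same route as the paper's: reduce $k/m$ to lowest terms, observe $\phi_{k,m}=\phi_{a,b}$, and split on the parity of the numerator to decide between $f_b$ and $\tilde f_b$. The only difference is that you spell out the uniqueness step (injectivity of $\cos$ on $(0,\pi)$ plus uniqueness of the lowest-terms representation) that the paper leaves implicit.
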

\begin{proof}
Let $d=\gcd(k,m)$, and $k^\prime=k/d$ and $m^\prime = m/d$. Then $k^\prime/m^\prime$ is the unique way to represent $k/m$ in lowest terms, and $\phi_{k,m}=\phi_{k^\prime,m^\prime}$. Now $(x-\phi_{k,m})\vert f_{m^\prime}(x)$ iff $k^\prime$ is odd and $(x-\phi_{k,m})\vert \tilde{f}_{m^\prime}(x)$ iff $k^\prime$ is even.
\end{proof}

\begin{prop}\label{prop:cyclicfactor}
For $n>1$, let $n=2^t m$ where $k$ is a non-negative integer and $m$ is an odd positive integer. Then
$$I(C_n,x)=\prod_{r\vert m} f_{2^t r}(x).$$
\end{prop}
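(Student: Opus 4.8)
The plan is to prove the identity by comparing the two sides root by root. First note that both sides have constant term $1$: for $I(C_n,x)$ this is just $i_0(C_n)=1$, and for each factor $f_{2^t r}(x)$ it follows from the construction above, where the polynomial produced has the form $\sum_{s} b_s(-x)^{d-s}$ with $b_d=1$ (because $h(x)=g(x-2)$ is monic), so its value at $0$ is $b_d=1$; the trivial factor $f_1=1$, which appears only when $t=0$ and $r=1$, also has constant term $1$. Consequently, once we know that $I(C_n,x)$ and $\prod_{r\mid m} f_{2^t r}(x)$ are both squarefree and have the same multiset of roots, they must be equal: two squarefree polynomials with the same roots differ by a multiplicative constant, and equal nonzero constant terms force that constant to be $1$.

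Next I would pin down the roots of each side. By Proposition~\ref{prop:cyclicroot}, the roots of $I(C_n,x)$ are exactly the $\phi_{j,n}$ with $j$ odd and $1\le j\le n-1$; since $\cos$ is injective on $(0,\pi)$ these $\lfloor n/2\rfloor$ numbers are pairwise distinct, and as $\deg I(C_n,x)=\alpha(C_n)=\lfloor n/2\rfloor$ this is a complete list of roots, each simple, so $I(C_n,x)$ is squarefree. For the factors, I apply the characterisation of the roots of $f_N$ just proved, with $N=2^t r$: using that an odd integer is coprime to $2^t r$ precisely when it is coprime to $r$, the roots of $f_{2^t r}(x)$ are the $\phi_{j',\,2^t r}$ with $j'$ odd, $\gcd(j',r)=1$, and $1\le j'\le 2^t r-1$ (this range holds uniformly, whether $t=0$ or $t\ge 1$). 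By Corollary~\ref{cor:unique} each such root lies in $f_{2^t r}(x)$ and in no other $f_N$, so distinct divisors $r\mid m$ contribute disjoint root sets; hence $\prod_{r\mid m} f_{2^t r}(x)$ is squarefree.

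The core of the argument is a bijection between the index set $\{\,j: j\text{ odd},\ 1\le j\le n-1\,\}$ (for $I(C_n,x)$) and the disjoint union, over $r\mid m$, of the index sets for the $f_{2^t r}(x)$. Given an odd $j$ in $[1,n-1]$, set $d=\gcd(j,m)$; since $j$ is odd and $n=2^t m$, we also have $d=\gcd(j,n)$, so $\phi_{j,n}=\phi_{j/d,\,n/d}=\phi_{j/d,\,2^t(m/d)}$. Writing $r=m/d$ (a divisor of $m$) and $j'=j/d$, the pair $(r,j')$ meets the constraints above: $j'$ is odd, $\gcd(j',r)=1$, and $1\le j'\le 2^t r-1$ because $1\le j<n$. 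Conversely, from such a pair $(r,j')$ put $j=j'\cdot(m/r)$; then $j$ is odd, $1\le j\le n-1$, and $\phi_{j',\,2^t r}=\phi_{j,n}$. A short computation of $\gcd(j,m)$ shows these two assignments are mutually inverse, so the root multisets of $I(C_n,x)$ and $\prod_{r\mid m} f_{2^t r}(x)$ coincide, and the proof concludes as in the first paragraph.

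All the steps are essentially bookkeeping, and I do not anticipate a genuine difficulty; the two points to handle with care are (i) confirming that the list of roots in Proposition~\ref{prop:cyclicroot} is exhaustive with the correct multiplicities, which the degree count $\deg I(C_n,x)=\lfloor n/2\rfloor$ settles, and (ii) checking that the upper bound ``$1\le k\le N/2$'' in the statement about the roots of $f_N$ really does translate to the clean range ``$j'$ odd, $1\le j'\le 2^t r-1$, $\gcd(j',r)=1$'' in both the case $t=0$ and the case $t\ge 1$. I expect step (ii), reconciling those index ranges, to be the only mildly delicate part.
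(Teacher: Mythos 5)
Your proof is correct and follows essentially the same route as the paper's: both sides are identified by matching their roots, and your bijection $j \leftrightarrow (r,j')$ via $d=\gcd(j,m)$ is exactly the computation the paper carries out in its two divisibility directions ($\prod_{r\vert m} f_{2^t r}(x)\mid I(C_n,x)$ and conversely), with Corollary~\ref{cor:unique} supplying the disjointness of the factors. Your explicit handling of squarefreeness and of the constant term tidies up a normalisation the paper leaves implicit, but the argument is the same.
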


\begin{proof}
From Proposition~\ref{prop:cyclicroot}, it is clear that
$$I(C_n,x)=\prod_{k=1}^{\left\lfloor\frac{n}{2}\right\rfloor} \left(x-\phi_{2k-1,n}\right).$$

For all $r\vert m$, the factors of $f_{2^t r}(x)$ are $(x-\phi_{2j-1,2^t r})$, where $\gcd(2j-1,2^t r)=1$. But $d=m/r$ is an odd integer, and $\phi_{2j-1,2^t r}=\phi_{(2j-1)d,2^t m}=\phi_{(2k-1), n}$ for some value of $k$. Hence, $f_{2^t r}(x) \vert I(C_n,x)$. From Corollary~\ref{cor:unique}, none of the $f_{2^t r}(x)$ have common factors, and so
$$\left.\prod_{r\vert m} f_{2^t r}(x)\right\vert I(C_n,x).$$

For each value of $2k-1$, let $d_k=\gcd(2k-1,n)$. Then
$\phi_{2k-1,n} = \phi_{(2k-1)/d_k,n/d_k}$, and $\gcd((2k-1)/d_k,n/d_k)=1$. Therefore, $(x-\phi_{2k-1,n})$ is a factor of $f_{n/d_k}(x)$. Since $d_k\vert(2k-1)$, $d_k$ is odd, and hence $n/d_k = 2^t r$ for some $r\vert m$. Therefore, $(x-\phi_{2k-1,n})\vert f_{2^t r}(x)$ for some $r\vert m$, and so
$$I(C_n,x)\left\vert\prod_{r\vert m} f_{2^t r}(x)\right..$$
\end{proof}

\begin{corollary}\label{cor:cyclicfactor}
For two cycle graphs $C_k$ and $C_n$, $I(C_k,x)\vert I(C_n,x)$ if and only if $n/k$ is an odd number.
\end{corollary}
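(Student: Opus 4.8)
The plan is to read the corollary straight off the factorisation in Proposition~\ref{prop:cyclicfactor}, using Corollary~\ref{cor:unique} to recognise that factorisation as the one into irreducibles, after which the claim reduces to counting divisors. First I would fix notation: write $k=2^s m$ and $n=2^t m'$ with $m,m'$ odd, so Proposition~\ref{prop:cyclicfactor} gives $I(C_k,x)=\prod_{r\mid m} f_{2^s r}(x)$ and $I(C_n,x)=\prod_{r'\mid m'} f_{2^t r'}(x)$. Each $f_N$ has integer coefficients and constant term $1$, and for $N\ge 2$ it is irreducible with $\phi_{1,N}$ as a root; moreover, by Corollary~\ref{cor:unique}, distinct $f_N$'s have disjoint root sets, so distinct $f_N$'s with $N\ge 2$ are pairwise non-associate irreducibles (and $f_1=1$ is a unit that can be dropped from the products). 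Hence the two displayed products are genuinely the factorisations of $I(C_k,x)$ and $I(C_n,x)$ into distinct irreducibles; in particular both polynomials are squarefree, and since all the polynomials in sight are primitive, divisibility over $\mathbb{Z}[x]$ and over the UFD $\mathbb{Q}[x]$ coincide (Gauss's lemma). Therefore $I(C_k,x)\mid I(C_n,x)$ if and only if every factor $f_{2^s r}$ with $r\mid m$ occurs among the factors $f_{2^t r'}$ with $r'\mid m'$, which by the disjointness of root sets is exactly the set inclusion $\{\,2^s r : r\mid m\,\}\subseteq\{\,2^t r' : r'\mid m'\,\}$.

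It then remains to verify that this inclusion is equivalent to $n/k$ being an odd number. For the ``if'' direction, if $n/k$ is an odd integer then comparing powers of $2$ forces $s=t$, whence $m'/m=n/k$ is an odd integer and $m\mid m'$, so every divisor of $m$ is a divisor of $m'$ and the inclusion holds. For the ``only if'' direction, apply the inclusion with the choice $r=m$: then $2^s m=2^t r'$ for some odd $r'\mid m'$, and comparing powers of $2$ gives $s=t$ and $r'=m$; hence $m\mid m'$ and $n/k=2^t m'/(2^s m)=m'/m$, which is an odd integer.

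I do not expect a genuine obstacle here; the one point that needs care is the very first step, namely confirming that the product in Proposition~\ref{prop:cyclicfactor} is literally the irreducible factorisation of $I(C_n,x)$ — that the $f_{2^s r}$ for distinct $r\mid m$ are pairwise non-associate and each squarefree — so that divisibility of the polynomials can be replaced by containment of their factor sets. This is precisely what Corollary~\ref{cor:unique} supplies, via disjointness of the root sets of the various $f_N$. Everything after that is elementary $2$-adic bookkeeping on divisors.
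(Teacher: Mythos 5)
Your proposal is correct and follows the same route the paper intends: the paper states this corollary without proof as an immediate consequence of Proposition~\ref{prop:cyclicfactor}, and your argument is exactly that deduction carried out in detail (using Corollary~\ref{cor:unique} to see the product as a factorisation into distinct non-associate irreducibles, then reducing to the divisor-set inclusion and the $2$-adic bookkeeping). The extra care you take over $f_1=1$ being a unit and over Gauss's lemma is more than the paper records, but it is the same proof.
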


\begin{prop}\label{prop:pathfactor}
For positive odd $n$,
$$I(P_{n},x)=\prod_{r\vert (n+2)} \tilde{f}_r(x).$$
For positive even $n$, let $n+2=2^t m$ where $t$ and $m$ are positive integers and $m$ is odd. Then
$$I(P_{n},x) = \prod_{r\vert 2^{t-1}m} f_r(x) \prod_{s\vert m} \tilde{f}_s(x).$$
\end{prop}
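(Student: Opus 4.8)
The plan is to compare both sides root by root and then fix the overall constant. Throughout write $n=2^t m$ with $m$ odd, so that $t=0$ and $m=n$ when $n$ is odd, in which case the product $\prod_{r\mid 2^{t-1}m}f_r$ is read as empty. By Proposition~\ref{prop:pathroot} the roots of $I(P_{n-2},x)$ are exactly $\phi_{2i,n}$ for $i=1,\dots,\lfloor(n-1)/2\rfloor$; since $0<2i\pi/n<\pi$ and $\cos$ is injective there, these $\lfloor(n-1)/2\rfloor=\alpha(P_{n-2})=\deg I(P_{n-2},x)$ numbers are pairwise distinct and none equals $0$. Hence
$$I(P_{n-2},x)=c\prod_{i=1}^{\lfloor(n-1)/2\rfloor}\bigl(x-\phi_{2i,n}\bigr)$$
for some $c\neq 0$, and it suffices to show (i) that $\{\phi_{2i,n}:1\le i\le\lfloor(n-1)/2\rfloor\}$ equals the union of the root sets of the factors appearing on the right-hand side of the claimed identity, with each such factor squarefree and any two of them coprime, and (ii) that $c=1$.

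For (i), fix $i$ and reduce $2i/n$ to lowest terms, say $2i/n=k'/m'$; then $\phi_{2i,n}=\phi_{k',m'}$ and, by Corollary~\ref{cor:unique}, $(x-\phi_{2i,n})$ divides exactly one listed factor, namely $f_{m'}$ if $k'$ is odd and $\tilde{f}_{m'}$ if $k'$ is even, and no other $f_r$ or $\tilde{f}_r$. A short $2$-adic computation with $e=\gcd(2i,n)$ shows that $k'$ is odd precisely when $v_2(i)\le t-1$ (vacuous when $n$ is odd), and then $m'=n/e$ has $v_2(m')=t-1-v_2(i)$, so $m'\mid 2^{t-1}m$; while $k'$ is even precisely when $v_2(i)\ge t$ (always, when $n$ is odd), and then $m'=n/e$ is odd, so $m'\mid m$. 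Conversely, every root $\phi_{2\ell-1,r}$ of $f_r$ with $r\mid 2^{t-1}m$ equals $\phi_{(2\ell-1)(n/r),\,n}$, and $(2\ell-1)(n/r)$ is even since $v_2(n/r)\ge t-(t-1)=1$, hence equals $\phi_{2i,n}$ for some $i$ with $1\le i<n/2$; likewise every root $\phi_{2j,s}$ of $\tilde{f}_s$ with $s\mid m$ equals $\phi_{2j(n/s),\,n}=\phi_{2i,n}$ with $i=j(n/s)<n/2$. The remaining checks are routine: that $i\ge 1$, that the auxiliary moduli meet the inequalities built into the definitions of $f$ and $\tilde{f}$ (such as $1\le\ell\le m'/2$ and $1\le j<m'/2$), and that the degenerate values $m'\in\{1,2\}$ behave as expected. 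Squarefreeness and pairwise coprimality of the listed factors follow from their irreducibility over $\mathbb{Q}$ together with Corollary~\ref{cor:unique}.

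For (ii), evaluate at $x=0$. On the left, $I(P_{n-2},0)=i_0(P_{n-2})=1$. On the right, each $f_r$ and each $\tilde{f}_r$ has constant term $1$: in the explicit construction above, $f_r(0)$ is the leading coefficient of $h(x)=g(x-2)$, which is the leading coefficient $1$ of the monic polynomial $g$, and similarly for $\tilde{f}_r$ (with $f_1=\tilde{f}_1=1$). Since $0$ is a root of neither side, matching the values at $0$ forces $c=1$, proving the identity.

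The main obstacle is the bookkeeping in step (i): one must track carefully how $v_2(\gcd(2i,n))$ controls the parity of the reduced numerator $k'$, and hence whether $(x-\phi_{2i,n})$ lies in an $f$-factor or a $\tilde{f}$-factor and which modulus it carries, while confirming that exactly the divisors of $2^{t-1}m$ (for the $f$'s) and of $m$ (for the $\tilde{f}$'s) occur. For even $n$ there is a shorter route avoiding this: the proof of Proposition~\ref{prop:zhang} actually yields the polynomial identity $I(P_{2k},x)=I(P_{k-1},x)\,I(C_{k+1},x)$ for $k\ge2$, so iterating with $2k=n-2$ reduces $I(P_{n-2},x)$ to cyclic factors $I(C_{2^j m},x)$ times an odd path $I(P_{m-2},x)$ (or a small path handled directly when $m=1$); Proposition~\ref{prop:cyclicfactor} then expands each cyclic factor, and the rearrangement uses only that every divisor of $2^{t-1}m$ is uniquely of the form $2^s r$ with $0\le s\le t-1$ and $r\mid m$.
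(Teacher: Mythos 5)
Your argument is correct, but it is organized differently from the paper's. For odd $n$ the two proofs coincide in substance: both match the roots $\phi_{2i,n}$ of $I(P_{n-2},x)$ against the root sets of the $\tilde{f}_r$ via reduction of $2i/n$ to lowest terms and Corollary~\ref{cor:unique}. The divergence is in the even case: the paper does \emph{not} redo the root-matching there, but instead iterates Proposition~\ref{prop:zhang} to write $I(P_{2^t m-2},x)=I(P_{m-2},x)\prod_{i=0}^{t-1}I(C_{2^i m},x)$, invokes the odd case for the path factor and Proposition~\ref{prop:cyclicfactor} for each cycle, and observes that $2^i r$ with $0\le i\le t-1$, $r\mid m$ runs over the divisors of $2^{t-1}m$ --- exactly the ``shorter route'' you mention at the end of your write-up. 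You instead treat both parities uniformly by a direct $2$-adic analysis of $v_2(\gcd(2i,n))$ to decide whether each root lands in an $f$-factor or a $\tilde{f}$-factor and with which modulus. Your route costs more bookkeeping but is self-contained (no dependence on Propositions~\ref{prop:zhang} and~\ref{prop:cyclicfactor}) and is actually more careful on one point the paper elides: the paper asserts $I(P_{n-2},x)=\prod_k(x-\phi_{2k,n})$ outright, which is only true up to a nonzero scalar since $I(P_{n-2},x)$ has constant term $1$ rather than leading coefficient $1$; your normalization step, comparing degrees and evaluating at $x=0$ where every $f_r$ and $\tilde{f}_r$ takes the value $1$, closes that gap explicitly. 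The paper's route buys brevity and makes the appearance of the divisor set of $2^{t-1}m$ structurally transparent; yours buys independence from the cycle factorization and a cleaner treatment of the leading constant.
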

\begin{proof}
From Proposition~\ref{prop:pathroot}, it is clear that
$$I(P_{n},x)=\prod_{k=1}^{\left\lfloor\frac{n+1}{2}\right\rfloor} \left(x-\phi_{2k,n+2}\right).$$

If $n$ is odd, then $\gcd(2k,n+2)=1$ iff $\gcd(k,n+2)=1$. For all $r\vert (n+2)$, where $r>1$, the factors of $\tilde{f}_r(x)$ are $(x-\phi_{2k,r})$, where $\gcd(k,r)=1$. From Corollary~\ref{cor:unique}, none of the $\tilde{f}_r (x)$ have common factors, and so
$$\left.\prod_{r\vert (n+2)} \tilde{f}_r(x)\right\vert I(P_{n},x).$$

For each value of $2k$, let $d_k=\gcd(2k,n+2)$, and let $(n+2)/d_k = r$. Then
$\phi_{2k,n+2} = \phi_{2k/d_k,r}$, and $\gcd(2k/d_k,r)=1$. Therefore, $(x-\phi_{2k,n+2})$ is a factor of $f_r (x)$ for some $r\vert (n+2)$, and so
$$I(P_{n},x)\left\vert\prod_{r\vert (n+2)} \tilde{f}_r(x)\right..$$

Therefore,
$$I(P_{n},x)=\prod_{r\vert (n+2)} \tilde{f}_r(x)$$
for positive odd $n$.

For positive even $n$, we repeatedly apply Proposition~\ref{prop:zhang}.
\begin{eqnarray*}
I(P_{2^t m -2},x) & = & I(P_{2^{t-1} m -2},x)I(C_{2^{t-1} m}, x) \\
& = & I(P_{2^{t-2} m -2},x) I(C_{2^{t-2} m}, x) I(C_{2^{t-1} m}, x) \\
& \vdots & \\
& = & I(P_{m-2},x) \prod_{i=0}^{t-1}  I(C_{2^i m}, x)
\end{eqnarray*}
Since $m-2$ is odd,
$$I(P_{m-2},x)=\prod_{s\vert m} \tilde{f}_s(x),$$
Also, for each $i$,
$$I(C_{2^i m}, x)=\prod_{r\vert m} f_{2^i r}(x),$$
so that
$$\prod_{i=0}^{t-1}  I(C_{2^i m}, x)=\prod_{i=0}^{t-1}\prod_{r\vert m} f_{2^i r}(x).$$
But as $i$ ranges from $0$ to $t-1$ and $r$ ranges over all factors of $m$, $2^i r$ would range over all factors of $2^{t-1}m$. Hence,
$$I(P_n,x) = \prod_{r\vert 2^{t-1}m} f_r(x) \prod_{s\vert m} \tilde{f}_s(x).$$
\end{proof}

In~\cite{ng}, examples are given for factorising $I(C_n,x)$ for $n=3,5,9,15$. Here, we will illustrate the factorisation of some even cycle graphs and paths.

\begin{example}
We have
$$f_2(x) = I(P_2,x) = 1 + 2x$$
and
$$f_3(x) = I(C_3,x) = 1 + 3x.$$
Both of these are irreducible polynomials.
\end{example}
\begin{example}
We have
$$\tilde{f}_3(x) = I(P_1,x) = 1+x.$$
\end{example}
\begin{example}
We have
\begin{eqnarray*}
I(C_6,x) & = & 1 + 6x + 9x^2 + 2x^3\\
& = & \underbrace{(1 + 2x)}_{f_2(x)}\underbrace{(1 + 4x + x^2)}_{f_6(x)}.
\end{eqnarray*}
It so happens that
$$I(K_4 - e, x) = 1 + 4x + x^2 = f_6(x),$$
which explains why $K_4-e\cup P_2$ is independence equivalent to $C_6$.
\end{example}
\begin{example}
As mentioned in the previous section, $\mathcal{I}(P_{10})$ contains ten graphs~\cite{beaton}. We can determine which graphs are in $\mathcal{I}(P_{10})$ by another approach, namely, that of factorising $I(P_{10},x)$.
\begin{eqnarray*}
I(P_{10},x) & = & 1 + 10x + 36x^2 + 56x^3 + 35x^4 + 6x^5\\
& = & \underbrace{(1 + 2x)}_{f_2(x)}\underbrace{(1 + 3x)}_{f_3(x)}\underbrace{(1 + 4x + x^2)}_{f_6(x)}\underbrace{(1 + x)}_{\tilde{f}_3(x)}.
\end{eqnarray*}
Some of the graphs mentioned in the previous section are listed below with their independence polynomial.
\begin{center}
\begin{tabular}{c|c}
Graph $G$ & Independence polynomial $I(G,x)$ \\
\hline
$P_{10}$ & $f_2(x)f_3(x)f_6(x)\tilde{f}_3(x)$ \\
$Y_{3,2,1}$ (see Figure~\ref{fig:g01}) & $f_2(x)f_6(x)\tilde{f}_3(x)$ \\
$C_6$, $D_6$ & $f_2(x)f_6(x)$ \\
$E_{1,1}$ (see Figure~\ref{fig:g01}), $A_{1,1}$ (see Figure~\ref{fig:g12}) & $f_6(x)\tilde{f}_3(x)$ \\
$K_4-e$ & $f_6(x)$ \\
$C_3$ & $f_3(x)$ \\
$P_2$ & $f_2(x)$ \\
$P_1$ & $\tilde{f}_3(x)$
\end{tabular}
\end{center}
We can see which of these graphs' independence polynomials can be multiplied together to give $I(P_{10},x)$.

\end{example}

\section{Structure of a graph which is in $\mathcal{I}(P_n)$}

\begin{definition}
For any graph~$G$, let $n_G(C_3)$ be the number of triangles in $G$, and $g_i$ denote the number of vertices in $G$ of degree~$i$.
\end{definition}

\begin{prop}\label{prop:degreesum}
If $G$ is a graph such that $G\in\mathcal{I}(P_n)$, then
\begin{enumerate}[label=(\roman*)]
    \item $\displaystyle\sum_{i=0}^{n-1}g_i = n$,
    \item $\displaystyle\sum_{i=1}^{n-1}i\cdot g_i = 2n-2$,
    \item $\displaystyle\sum_{i=2}^{n-1}\binom{i}{2}g_i = n - 2 + n_G(C_3)$,
    \item $n_G(C_3) = g_0 + \displaystyle\sum_{i=3}^{n-1}\binom{i-1}{2}g_i$.
\end{enumerate}
\end{prop}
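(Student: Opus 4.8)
The plan is to obtain the four identities by comparing the coefficients $i_1,i_2,i_3$ of $I(G,x)$ with those of $I(P_n,x)$. Write $N=|V(G)|$ and $m=|E(G)|$. The first ingredient is the standard expansion of the low-order coefficients of any graph's independence polynomial: $i_1(G)=N$; $i_2(G)=\binom{N}{2}-m$, because a $2$-element independent set is precisely a non-edge; and $i_3(G)=\binom{N}{3}-m(N-2)+\sum_{v\in V(G)}\binom{d(v)}{2}-n_G(C_3)$, where $d(v)$ denotes the degree of $v$. I would prove the last formula by classifying the $3$-subsets of $V(G)$ according to how many edges they span (a $3$-subset spanning three edges is a triangle; one spanning two edges is counted exactly once by $\sum_v\binom{d(v)}{2}$; the quantity $m(N-2)$ counts each $3$-subset once for each edge it spans), or equivalently by inclusion--exclusion over the edges. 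Since $\sum_v\binom{d(v)}{2}=\sum_{i\geq 2}\binom{i}{2}g_i$, $\sum_v 1=\sum_i g_i$, and $\sum_v d(v)=\sum_i i\,g_i=2m$, these coefficients are exactly the left-hand sides appearing in (i)--(iii).

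Next I would specialise to $P_n$: it has $N=n$ vertices, $m=n-1$ edges, two vertices of degree $1$, $n-2$ of degree $2$, and no triangle, so $i_1(P_n)=n$, $i_2(P_n)=\binom{n}{2}-(n-1)$, and $i_3(P_n)=\binom{n}{3}-(n-1)(n-2)+(n-2)$. Equating with the coefficients of $I(G,x)$: the equality $i_1(G)=i_1(P_n)$ gives $\sum_i g_i=n$, which is (i) (all degrees in $G$ lie in $\{0,\dots,n-1\}$, so the sum may be cut off at $n-1$); combined with $N=n$, the equality $i_2(G)=i_2(P_n)$ forces $m=n-1$, hence $\sum_i i\,g_i=2m=2n-2$, which is (ii); and substituting $N=n$, $m=n-1$ into $i_3(G)=i_3(P_n)$ cancels the $\binom{N}{3}$ and $m(N-2)$ terms, leaving $\sum_{i\geq2}\binom{i}{2}g_i-n_G(C_3)=n-2$, which is (iii).

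For (iv) no further graph-theoretic input is needed; it is a linear combination of (i)--(iii). Subtracting (i) from (ii) gives $\sum_{i\geq2}(i-1)g_i=n-2+g_0$, and subtracting this from (iii), using the identity $\binom{i}{2}-(i-1)=\binom{i-1}{2}$, gives $\sum_{i\geq2}\binom{i-1}{2}g_i=n_G(C_3)-g_0$; the $i=2$ term vanishes, so $n_G(C_3)=g_0+\sum_{i\geq3}\binom{i-1}{2}g_i$, which is (iv). The only step requiring care is the derivation of the formula for $i_3(G)$ --- getting the triangle term and the $\sum_v\binom{d(v)}{2}$ term with the right signs --- since everything after that is routine bookkeeping.
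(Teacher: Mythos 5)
Your proposal is correct and follows essentially the same route as the paper: identities (i)--(iii) come from equating $i_1$, $i_2$, $i_3$ of $G$ and $P_n$, and (iv) is the same linear combination of (i)--(iii) (your two-step subtraction is algebraically identical to the paper's ``add (i) and (iii), subtract (ii)''). The only difference is that where the paper cites Theorem~3.2 of Beaton, Brown and Cameron for the $i_3$ formula underlying (iii), you derive $i_3(G)=\binom{N}{3}-m(N-2)+\sum_v\binom{d(v)}{2}-n_G(C_3)$ yourself by inclusion--exclusion on the edges spanned by a $3$-subset, which is a correct, self-contained substitute for that citation.
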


The proof of this borrows very heavily from Beaton, Brown and Cameron~\cite{beaton}.

\begin{proof}
Note that $i_1(G)=i_1(P_n)=n$ since the number of independent sets of cardinality~1 in a graph is simply the number of vertices of a graph. Furthermore,
$$i_2(G)=\binom{|V(G)|}{2}-|E(G)|,$$
and since $i_2(G)=i_2(P_n)$, $G$ and $P_n$ must have the same number of edges as well. Now the left side of~(i) is $|V(G)|=n$. The left side of~(ii) counts the number of ordered pairs $(v,e)$ where $v$ is a vertex of $G$ and $e$ is an edge incident to $v$. In other words, it is $2|E(G)|=2|E(P_n)|=2n-2$. The identity~(iii) follows from Theorem~3.2 of Beaton, Brown and Cameron~\cite{beaton} and the fact that $i_3(G)=i_3(P_n)$. Finally,~(iv) follows from adding~(i) and~(iii) and subtracting~(ii).
\end{proof}

Since the independence polynomial of $P_n$ does not have repeated roots, at most one of the connected components can be an isolated vertex. In other words, $g_0\leq 1$. Furthermore, at most one of the connected components can be $C_3$. If any of the other connected components contains a triangle, then at least one of the vertices of the triangle must have degree~3 or larger.

\begin{prop}\label{prop:maxdegree3}
If $G$ is a graph such that $G\in\mathcal{I}(P_n)$, then the maximum degree $\Delta(G)$ of $G$ is at most~$3$ and $n_G(C_3)=g_0 + g_3$.
\end{prop}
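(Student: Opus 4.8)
The statement has two halves, but once we know $g_i=0$ for every $i\ge 4$ --- which \emph{is} the assertion $\Delta(G)\le 3$ --- the rest is immediate: for $i\le 3$ one has $\binom{i-1}{2}=0$ unless $i=3$, so part~(iv) of Proposition~\ref{prop:degreesum} reads $n_G(C_3)=g_0+g_3$. So the whole task is to bound the maximum degree. Here I would first note that Proposition~\ref{prop:degreesum} together with $g_0\le 1$ and ``at most one $C_3$-component'' is \emph{not} enough on its own: for instance the disjoint union of a bowtie (two triangles sharing a vertex), a $C_3$, and two copies of $P_2$ satisfies all of (i)--(iv), has $g_0=0$ and exactly one $C_3$-component, yet has a vertex of degree~$4$. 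The extra leverage has to come from a further coefficient, so I would use that $I(G,x)=I(P_n,x)$ forces $i_4(G)=i_4(P_n)$ as well. (Equivalently one could work with roots: by Proposition~\ref{prop:pathroot}, $I(P_n,x)$ is squarefree with every root strictly less than $-\tfrac14$, hence each component $G_j$ of $G$ has $I(G_j,x)$ squarefree with all roots real and $<-\tfrac14$; but deducing $\Delta(G_j)\le 3$ from that needs the same work.)

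The plan for the $i_4$ identity is the same inclusion--exclusion over edges used for (iii) (in the spirit of Theorem~3.2 of~\cite{beaton}): $\binom{|V|}{4}-i_4$ expands in terms of $|E|$, the cherry count $\sum_v\binom{d_v}{2}$, the disjoint-edge-pair count $\binom{|E|}{2}-\sum_v\binom{d_v}{2}$, the triangle count $n_G(C_3)$, the number of copies of $P_4$, the claw count $\sum_v\binom{d_v}{3}$, and the numbers of copies of $C_4$, of a paw (a triangle with a pendant edge), of $K_4-e$, and of $K_4$. Substituting the $P_n$-values (all ``dense'' counts $0$, the $P_4$-count $n-3$, the cherry count $n-2$) and using parts (i)--(iii) to cancel everything depending only on $|V|$, $|E|$ and $n_G(C_3)$, the equation $i_4(G)=i_4(P_n)$ collapses to a single relation $(\star)$ among the $P_4$-count of $G$, the claw count $\sum_v\binom{d_v}{3}$, $n_G(C_3)$, and the $C_4$/paw/$(K_4-e)$/$K_4$-counts of $G$.

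Next I would exploit the tight edge budget. Since $G$ has only $n-1$ edges, summing $e_j-n_j+1$ over the $c$ components gives total cyclomatic number $c-1$; this caps how many $4$-cycles, paws, $K_4-e$'s and $K_4$'s $G$ can contain and bounds its $P_4$-count in terms of $n$. Feeding these bounds into $(\star)$, together with (iv) written as $n_G(C_3)=g_0+\sum_{i\ge3}\binom{i-1}{2}g_i$, should leave an inequality forcing $\sum_{i\ge4}(\text{strictly positive coefficient})\cdot g_i\le 0$, hence $g_i=0$ for all $i\ge4$. I would organise this componentwise: prove that a connected graph on $\nu$ vertices with $\nu-1+r$ edges cannot contain a vertex of degree $\ge4$ without forcing more of the above subgraphs than $(\star)$ permits, then sum the ``defect'' over components using $\sum_j r_j=c-1$.

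The main obstacle is exactly this last step. The $i_4$-relation mixes several subgraph counts with opposite signs, so pinning down ``$g_i=0$ for $i\ge4$'' means carefully ruling out the ways a degree-$\ge4$ vertex could hide inside a cheap-but-dense component; and it is quite possible --- as elsewhere in~\cite{beaton} and~\cite{ng} --- that $i_4$ alone does not suffice and one must also use the analogous relation coming from $i_5(G)=i_5(P_n)$. Once $\Delta(G)\le 3$ is in hand, $n_G(C_3)=g_0+g_3$ follows from (iv) as above.
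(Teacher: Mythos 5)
Your reduction of the second claim to the first is correct (once $g_i=0$ for all $i\ge 4$, identity (iv) of Proposition~\ref{prop:degreesum} collapses to $n_G(C_3)=g_0+g_3$, exactly as in the paper), and your preliminary observation deserves emphasis because it is sharper than you present it: the graph $B\cup C_3\cup P_2\cup P_2$, with $B$ the bowtie, satisfies all of (i)--(iv) with $g_0=0$ and a single $C_3$ component while having a vertex of degree~$4$, and the paper's own proof uses \emph{only} those ingredients. The paper sets $h=g_4+\cdots+g_{n-1}$, deduces $n_G(C_3)\ge g_0+g_3+3h$ from (iv), asserts $g_3+h\ge n_G(C_3)-1$ on the grounds that every triangle except at most one has a vertex of degree at least~$3$, and combines the two to force $h=0$. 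That asserted inequality tacitly assumes distinct triangles can be charged to distinct vertices of degree at least~$3$, and the bowtie is precisely the configuration where this fails: both of its triangles have the central vertex as their only vertex of degree at least~$3$, so $g_3+h=1$ while $n_G(C_3)-1=2$. (Your graph has the repeated independence root $-\tfrac{1}{2}$ coming from the two $P_2$ components, so it does not contradict the proposition itself, but it does show that the justification of this step in the paper is incomplete, and that any correct proof needs something beyond (i)--(iv), $g_0\le 1$ and the one-$C_3$ observation --- or at least a more careful charging argument.)

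That said, your own proposal does not constitute a proof. Its entire weight rests on the claim that the relation $(\star)$ extracted from $i_4(G)=i_4(P_n)$, combined with the cyclomatic-number budget $\sum_j r_j=c-1$ and identity (iv), forces $g_i=0$ for all $i\ge 4$. You never derive $(\star)$ explicitly, never verify that the coefficients of $g_4,g_5,\ldots$ in the resulting inequality are strictly positive, and you concede yourself that $i_4$ alone may be insufficient and that $i_5$ might be required. This is not a routine verification that can be deferred: a degree-$4$ vertex shared by two triangles is cheap in several of the subgraph counts entering your expansion (it creates no $C_4$, no $K_4-e$ and no $K_4$), so whether the signs in $(\star)$ actually cooperate is exactly the content of the proposition and must be computed. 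As it stands, the proposal correctly diagnoses why the easy argument (and, as written, the paper's argument) is insufficient, but it leaves the actual proof that $\Delta(G)\le 3$ open.
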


\begin{proof}
From Proposition~\ref{prop:degreesum}(iv),
$$n_G(C_3)=g_0+g_3+3g_4+6g_5+\ldots+\binom{n-2}{2}g_{n-1}.$$
Let $h=g_4+\ldots+g_{n-1}>0$. Then $n_G(C_3)\geq g_0+g_3+3h$. Since every triangle, except at most one, has a vertex of degree~3 or larger, we see that $g_3+h$ must be at least as large as the number of triangles less one, that is, $g_3+h\geq n_G(C_3)-1$. Therefore, we have $g_3+h\geq g_0+g_3+3h-1$, which simplifies to $2h\leq 1-g_0\leq 1$. Since $h$ is a nonnegative integer, the only possibility is that $h=0$, so $g_4=\ldots=g_{n-1}=0$ and the maximum degree of $G$ is~$3$. This also implies that $n_G(C_3)=g_0+g_3$.
\end{proof}

Now, since Wingard~\cite{wingard} has already shown that if $G$ is a connected graph in $\mathcal{I}(P_n)$, then $G$ must be isomorphic to $P_n$, we can assume that $G$ contains connected components $G_1,\ldots,G_q$, where $q\geq 2$. For each connected component $G_i$, let $g_{i,j}$ be the number of vertices of degree~$j$ in $G_i$.

\begin{prop}\label{prop:maxdegree3verts}
Let $G_i$ be a connected graph. If $\Delta(G_i)=3$ and $G_i$ is not $K_4$, then $g_{i,3}\geq 2n_{G_i}(C_3)-2$.
\end{prop}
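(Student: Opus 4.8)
The plan is to compare the number of triangles of $G_i$ with its circuit rank. Recall that the cycle space (over $\mathbb{F}_2$) of a connected graph $H$ has dimension $|E(H)|-|V(H)|+1$. I will first reduce the statement to the purely structural claim that \emph{the triangles of $G_i$, viewed as edge sets, are linearly independent in the cycle space of $G_i$}. Granting this, $n_{G_i}(C_3)\le |E(G_i)|-|V(G_i)|+1$. Since $G_i$ is connected with $\Delta(G_i)=3$, every vertex has degree $1$, $2$ or $3$, so $2|E(G_i)| = g_{i,1}+2g_{i,2}+3g_{i,3}$ and $|V(G_i)| = g_{i,1}+g_{i,2}+g_{i,3}$; subtracting yields $|E(G_i)|-|V(G_i)| = \tfrac12(g_{i,3}-g_{i,1})$. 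Combining the two gives $g_{i,3}\ge g_{i,3}-g_{i,1}\ge 2n_{G_i}(C_3)-2$, which is what we want.

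It remains to prove the structural claim. First I would record an elementary consequence of $\Delta(G_i)\le 3$: an edge $uv$ lies in a triangle $uvw$ only when $w\in N(u)\cap N(v)$, hence $uv$ lies in at most $\min(\deg u,\deg v)-1\le 2$ triangles. Moreover, if $uv$ lies in two triangles $uvw_1$ and $uvw_2$, then necessarily $N(u)=\{v,w_1,w_2\}$ and $N(v)=\{u,w_1,w_2\}$, and $w_1w_2\notin E(G_i)$: otherwise $\{u,v,w_1,w_2\}$ would induce a $K_4$, and since all four of these vertices would then already have degree $3$, connectedness of $G_i$ would force $G_i=K_4$, contrary to hypothesis. (This is the only place $G_i\ne K_4$ is used.)

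Now suppose, for contradiction, that the triangles are dependent: there is a nonempty family $S$ of triangles of $G_i$ such that every edge of $G_i$ lies in an even number of members of $S$. Pick $T\in S$ and an edge $uv\in E(T)$. Then $uv$ lies in at least one, hence (evenness) at least two, members of $S$; by the previous paragraph $uv$ lies in exactly two triangles of $G_i$, say $T=uvw_1$ and $T'=uvw_2$, with $N(u)=\{v,w_1,w_2\}$, $N(v)=\{u,w_1,w_2\}$ and $w_1w_2\notin E(G_i)$. Consider the edge $uw_1\in E(T)$. Any triangle of $G_i$ through $uw_1$ uses a common neighbour of $u$ and $w_1$, which lies in $N(u)=\{v,w_1,w_2\}$; it is not $w_1$, it can be $v$ (and indeed $uvw_1=T$ realises this), but it cannot be $w_2$ because $w_1w_2\notin E(G_i)$. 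Hence $T$ is the \emph{only} triangle of $G_i$ containing $uw_1$, so $uw_1$ lies in exactly one member of $S$ — an odd number — contradicting the defining property of $S$. This proves linear independence and completes the argument.

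The main obstacle is pinning down the correct structural lemma, and in particular handling the ``two triangles on a single edge'' configuration: it genuinely can occur (it does in $K_4-e$, where the bound is tight), so one cannot simply assert that the triangles are edge-disjoint; the point is rather that such a configuration still cannot propagate into a full linear dependence once $G_i\ne K_4$, which is exactly what the edge-$uw_1$ step above exploits. The remaining ingredients — the cycle-space dimension formula and the degree-sum bookkeeping — are routine.
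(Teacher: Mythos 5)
Your proof is correct and follows essentially the same route as the paper: both bound $n_{G_i}(C_3)$ by the circuit rank $|E(G_i)|-|V(G_i)|+1$ via linear independence of the triangles in the cycle space, and then finish with the identical degree-sum bookkeeping. The only cosmetic difference is that the paper establishes independence by showing every triangle has a ``private'' edge (using the same two structural facts you use --- an edge lies in at most two triangles when $\Delta\le 3$, and the $K_4$ exclusion), whereas you run the equivalent argument as a direct contradiction against a hypothetical $\mathbb{F}_2$-dependency.
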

\begin{proof}
Since $\Delta(G_i)=3$, $G_i$ has at least 4 vertices. If $G_i$ is not $K_4$ then three triangles cannot share a vertex, otherwise the edges of the triangles which are not incident to that vertex would form a fourth triangle and create a $K_4$ induced subgraph, which would be the entirety of $G_i$ as the maximum degree of $G_i$ is~3.

If two triangles share a vertex then they also share an edge $e$, and both vertices incident to the shared edge would have degree~3. None of the remaining edges of the triangles (two on each triangle) can be shared with a third triangle, as this would cause one of the vertices incident to $e$ to be of degree~4. Therefore, every triangle has at least one edge that is not shared with any other triangle.

Therefore, the triangles can form a subset of a cycle basis of $G_i$, so
$$n_{G_i}(C_3)\leq |E(G_i)|-|V(G_i)|+1$$
as $G_i$ is connected (see Chapter~4.6 of~\cite{gross}). Since
$$|V(G_i)|=g_{i,1}+g_{i,2}+g_{i,3}$$
and
$$|E(G_i)|=\frac{1}{2}\left(g_{i,1}+2g_{i,2}+3g_{i,3}\right),$$
we have
$$n_{G_i}(C_3)\leq \frac{1}{2}\left(g_{i,3}-g_{i,1}\right)+1\leq\frac{g_{i,3}}{2}+1$$
and the result follows.
\end{proof}

\begin{corollary}\label{cor:maxdegree3verts1}
Let $G_i$ be a connected graph. If $\Delta(G_i)=3$ and $n_{G_i}(C_3)\geq 2$, then $g_{i,3} \geq n_{G_i}(C_3)$.
\end{corollary}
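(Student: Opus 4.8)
The plan is to derive this immediately from Proposition~\ref{prop:maxdegree3verts}, after separating out the one graph that proposition excludes. So I would begin by splitting into two cases according to whether $G_i\cong K_4$ or not.

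In the case $G_i\cong K_4$, I would simply compute both quantities directly: $K_4$ has four triangles, so $n_{G_i}(C_3)=4$, and all four of its vertices have degree~$3$, so $g_{i,3}=4$. Hence $g_{i,3}=4\geq 4=n_{G_i}(C_3)$, and the inequality holds (in fact with equality). Note that $\Delta(K_4)=3$ and $n_{K_4}(C_3)=4\geq 2$, so this case genuinely falls under the hypotheses and must be checked separately, since Proposition~\ref{prop:maxdegree3verts} does not apply to it.

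In the case $G_i\not\cong K_4$, I would invoke Proposition~\ref{prop:maxdegree3verts} to get $g_{i,3}\geq 2n_{G_i}(C_3)-2$. It then suffices to observe that the hypothesis $n_{G_i}(C_3)\geq 2$ gives $2n_{G_i}(C_3)-2 = n_{G_i}(C_3) + \bigl(n_{G_i}(C_3)-2\bigr) \geq n_{G_i}(C_3)$, so that $g_{i,3}\geq n_{G_i}(C_3)$ as claimed.

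There is no real obstacle here; the only thing to be careful about is not to forget the $K_4$ case, since it is precisely the case excluded by the proposition being applied, and it is the unique connected graph with $\Delta=3$ for which the bound $g_{i,3}\geq 2n_{G_i}(C_3)-2$ can fail (three triangles there do share a vertex). Everything else is the trivial arithmetic step above.
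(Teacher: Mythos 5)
Your proof is correct and follows exactly the paper's own argument: handle $G_i\cong K_4$ by direct computation ($g_{i,3}=4=n_{G_i}(C_3)$), and otherwise apply Proposition~\ref{prop:maxdegree3verts} together with the observation that $2n_{G_i}(C_3)-2\geq n_{G_i}(C_3)$ when $n_{G_i}(C_3)\geq 2$. The only difference is that you spell out the arithmetic and the reason $K_4$ must be treated separately in more detail than the paper does.
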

\begin{proof}
If $G_i = K_4$ then $g_{i,3} = 4 = n_{G_i}(C_3)$. Otherwise, $2n_{G_i}(C_3)-2\geq n_{G_i}(C_3)$ for $n_{G_i}(C_3)\geq 2$.
\end{proof}

\begin{corollary}\label{cor:maxdegree3verts2}
Let $G_i$ be a connected graph. If $\Delta(G_i)=3$ and $n_{G_i}(C_3)\geq 4$ then either $G_i$ is $K_4$, or $g_{i,3} \geq n_{G_i}(C_3)+2$.
\end{corollary}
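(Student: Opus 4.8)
The plan is to reduce immediately to Proposition~\ref{prop:maxdegree3verts}, in exactly the same spirit as the proof of Corollary~\ref{cor:maxdegree3verts1}. First I would dispose of the case $G_i \cong K_4$ by direct inspection: $K_4$ has four vertices, all of degree~$3$, and four triangles, so $g_{i,3} = 4 = n_{G_i}(C_3)$, and the statement holds in its first alternative. This is why the corollary is phrased disjunctively: the bound $g_{i,3}\geq n_{G_i}(C_3)+2$ genuinely fails for $K_4$, and Proposition~\ref{prop:maxdegree3verts} was already stated with the $K_4$ exception built in.

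For the remaining case, where $\Delta(G_i)=3$ but $G_i\not\cong K_4$, Proposition~\ref{prop:maxdegree3verts} supplies $g_{i,3}\geq 2\,n_{G_i}(C_3)-2$. It then suffices to note the elementary inequality $2\,n_{G_i}(C_3)-2 = n_{G_i}(C_3) + \bigl(n_{G_i}(C_3)-2\bigr) \geq n_{G_i}(C_3)+2$, which holds precisely because the hypothesis $n_{G_i}(C_3)\geq 4$ forces $n_{G_i}(C_3)-2\geq 2$. Chaining the two bounds gives $g_{i,3}\geq n_{G_i}(C_3)+2$, the second alternative.

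Honestly there is no real obstacle here: all the combinatorial work — the cycle-basis argument bounding the number of triangles by $|E(G_i)|-|V(G_i)|+1$, and the conversion of that into a statement about $g_{i,1}$ and $g_{i,3}$ — has already been carried out in Proposition~\ref{prop:maxdegree3verts}. This corollary merely records the numerically stronger conclusion that becomes available once one knows there are at least four triangles rather than at least two. The only point requiring any care is remembering to treat $K_4$ separately, since the slack in the inequality $n_{G_i}(C_3)\leq \tfrac12(g_{i,3}-g_{i,1})+1$ is exactly what $K_4$ saturates in the wrong direction.
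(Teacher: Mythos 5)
Your proof is correct and follows the paper's own argument exactly: dispose of $K_4$ via the first disjunct, then apply Proposition~\ref{prop:maxdegree3verts} and observe that $2\,n_{G_i}(C_3)-2\geq n_{G_i}(C_3)+2$ whenever $n_{G_i}(C_3)\geq 4$. No issues.
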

\begin{proof}
If $n_{G_i}(C_3)\geq 4$ then 
$2n_{G_i}(C_3)-2\geq n_{G_i}(C_3)+2$.
\end{proof}

\begin{prop}\label{prop:maxtriangles}
If $G$ is a graph such that $I(G,x)=I(P_n,x)$, then for each connected component $G_i$ of $G$, we have $g_{i,3}\leq n_{G_i}(C_3)+1$, and each $G_i$ has at most three triangles.
\end{prop}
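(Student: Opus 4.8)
The plan is to play the global identity $n_G(C_3)=g_0+g_3$ from Proposition~\ref{prop:maxdegree3} against the per-component lower bounds on the number of degree-$3$ vertices from Corollaries~\ref{cor:maxdegree3verts1} and~\ref{cor:maxdegree3verts2}. Since triangles and degree-$3$ vertices are each distributed among the components, and $g_0\in\{0,1\}$, that identity reads
$$\sum_i \bigl(n_{G_i}(C_3)-g_{i,3}\bigr)=g_0\in\{0,1\}.$$
Writing $\delta_i:=n_{G_i}(C_3)-g_{i,3}$, everything reduces to controlling the sign and magnitude of the $\delta_i$.

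First I would prove the key per-component inequality: $\delta_i\le 1$ for every component, with equality only when $G_i\cong C_3$. Since $\Delta(G_i)\le 3$ by Proposition~\ref{prop:maxdegree3}, this is a short case split. If $\Delta(G_i)\le 2$ then $G_i$ is a path or a cycle, which has a triangle only if $G_i\cong C_3$ (giving $\delta_i=1$), and otherwise $\delta_i=0$. If $\Delta(G_i)=3$ and $n_{G_i}(C_3)\ge 2$, then Corollary~\ref{cor:maxdegree3verts1} gives $g_{i,3}\ge n_{G_i}(C_3)$, so $\delta_i\le 0$. The remaining sub-cases $\Delta(G_i)=3$ with $n_{G_i}(C_3)\in\{0,1\}$ are not covered by the corollaries and need an elementary argument: when $n_{G_i}(C_3)=1$, connectivity of $G_i$ (which has $\ge 4$ vertices) forces some triangle vertex to have a neighbour outside the triangle and hence degree $3$, so $g_{i,3}\ge 1$ and $\delta_i\le 0$; when $n_{G_i}(C_3)=0$, $\delta_i=-g_{i,3}\le 0$. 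I expect these few-triangle cases, together with the $K_4$ exception below, to be the only fiddly points of the whole argument.

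The first claimed inequality $g_{i,3}\le n_{G_i}(C_3)+1$ then follows quickly. If some component $G_j$ violated it, then $\delta_j\le -2$. By the key inequality, every component with $\delta_i>0$ has $\delta_i=1$ and is a copy of $C_3$, and there is at most one such component (as already noted in the text, since $I(P_n,x)$ has distinct roots by Proposition~\ref{prop:pathroot}, so $I(C_3,x)^2\nmid I(P_n,x)$). Hence $\sum_{i\ne j}\delta_i\le 1$, which forces $g_0=\sum_i\delta_i\le -2+1=-1<0$, a contradiction.

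Finally, for the bound of three triangles, suppose a component $G_j$ has $n_{G_j}(C_3)\ge 4$. Then $\Delta(G_j)=3$ (a path or cycle has at most one triangle), so Corollary~\ref{cor:maxdegree3verts2} gives either $g_{j,3}\ge n_{G_j}(C_3)+2$, contradicting the inequality just proved, or $G_j\cong K_4$. To eliminate the latter I would note that $G_j\cong K_4$ would make $I(K_4,x)=1+4x$ a factor of $I(G,x)=I(P_n,x)$, forcing $-\tfrac14$ to be a root; but for $1\le i\le\lfloor(n+1)/2\rfloor$ we have $0<2i\pi/(n+2)<\pi$, hence $0<2+2\cos\!\bigl(\tfrac{2i\pi}{n+2}\bigr)<4$ and every root $p_i<-\tfrac14$, so $-\tfrac14$ is not a root — a contradiction. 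Therefore $n_{G_j}(C_3)\le 3$ for every component, which completes the proof. The main obstacle is really just the bookkeeping around the sporadic low-triangle and $K_4$ configurations; the quantitative heart of the argument is already packaged in the two corollaries.
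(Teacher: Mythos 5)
Your proposal is correct and follows essentially the same route as the paper: both rest on the identity $n_G(C_3)=g_0+g_3$ from Proposition~\ref{prop:maxdegree3}, the per-component bounds $g_{i,3}\geq n_{G_i}(C_3)$ (with the single $K_3$ exception, controlled by the distinct-roots observation), and then Corollary~\ref{cor:maxdegree3verts2} together with the fact that $-\tfrac{1}{4}$ is not a root of $I(P_n,x)$ to exclude $K_4$ and cap the triangle count at three. Your $\delta_i$ bookkeeping is just a compact repackaging of the paper's two-case summation, so no further comparison is needed.
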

\begin{proof}
From Proposition~\ref{prop:maxdegree3}, we know that $n_G(C_3)=g_0+g_3$, and since $I(G,x)$ has no repeated roots, $G$ can have at most one isolated vertex, so $0\leq g_0\leq 1$, and, therefore,
\begin{equation}\label{eq:maxdegree3-1}
g_3\leq n_G(C_3) \leq g_3+1
\end{equation}
Consider a connected component $G_i$ of $G$. If $n_{G_i}(C_3)=0$ then $g_{i,3}\geq n_{G_i}(C_3)$ trivially. If $n_{G_i}(C_3)=1$, then either $G_i=K_3$, in which case $g_{i,3}=0$, or $G_i$ must have at least one vertex of degree~3, so $g_{i,3}\geq 1$. Note that since $I(G,x)$ has no repeated roots, at most one $G_i$ can be $K_3$. Finally, from Corollary~\ref{cor:maxdegree3verts1}, if $n_{G_i}(C_3)\geq 2$ then $g_{i,3}\geq n_{G_i}(C_3)$. In summary,
\begin{equation}\label{eq:maxdegree3-2}
g_{i,3}\geq n_{G_i}(C_3)
\end{equation}
for all values of $i$ except at most one.

On the other hand, $g_3 =\sum_{i=1}^q g_{i,3}$ and $n_G(C_3) =\sum_{i=1}^q n_{G_i}(C_3)$.

\textbf{Case 1.} If none of the $G_i$ are $K_3$ then summing the inequality (\ref{eq:maxdegree3-2}) from $1$ to $r$ gives us $g_3 \geq n_G(C_3)$. Comparing this with (\ref{eq:maxdegree3-1}) means that $g_3 = n_G(C_3)$ and, in fact, $g_{i,3} = n_{G_i}(C_3)$ for each $i$.

\textbf{Case 2.} If, without loss of generality, $G_1=K_3$, then summing the inequality (\ref{eq:maxdegree3-2}) from $2$ to $r$, along with $g_{1,3}=0, n_{G_1}(C_3)=1$ gives us $g_3+1\geq n_G(C_3)$. Comparing this with (\ref{eq:maxdegree3-1}) means that $g_3+1 = n_G(C_3)$, and so, without loss of generality, $g_{r,3}=n_{G_r}(C_3)+1$ and $g_{i,3} = n_{G_i}(C_3)$ for each $i\in\{2,\ldots,q-1\}$.

In both cases, $g_{i,3} \leq n_{G_i}(C_3)+1$ for all $i$.

None of the $G_i$ can be $K_4$ as $I(K_4,x)=1+4x$ and its independence root is $-\frac{1}{4}$, which is not an independence root of any path (Proposition~\ref{prop:pathroot}). Using Corollary~\ref{cor:maxdegree3verts2}, we find that if $n_{G_i}(C_3)\geq 4$ for any $G_i$, then $g_{i,3} \geq n_{G_i}(C_3)+2$, a contradiction. Hence there are at most three triangles in each $G_i$.
\end{proof}

We can now catalogue a list of graph families based on the number of triangles and number of vertices of degree~3.
\begin{center}
\begin{tabular}{c|c|c}
$n_{G_i}(C_3)$ & $g_{i,3}$ & Candidates for $G_i$ \\
\hline
0 & 0 & $P_m$ ($m\geq 1$) and $C_m$ ($m\geq 4$) \\
0 & 1 & See Figure~\ref{fig:g01} \\
1 & 0 & $C_3$ \\
1 & 1 & $D_m$ ($m \geq 4$) \\
1 & 2 & See Figure~\ref{fig:g12} \\
2 & 2 & See Figure~\ref{fig:g22} \\
2 & 3 & See Figure~\ref{fig:g23} \\
3 & 4 & See Figure~\ref{fig:g34} 
\end{tabular}
\end{center}

In the next section, we will narrow down this list of graphs further to determine which can possibly be connected components of a graph $G\in\mathcal{I}(P_n)$.

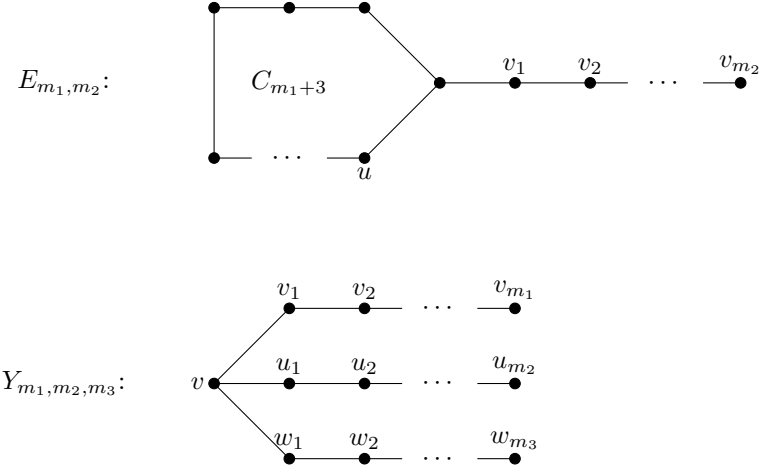
\begin{figure}
\centering
\begin{tikzpicture}
\node at (-2,9) {$E_{m_1,m_2}$:};
\filldraw(0,10) circle[radius=2pt];
\filldraw(1,10) circle[radius=2pt];
\filldraw(2,10) circle[radius=2pt];
\filldraw(0,8) circle[radius=2pt];
\node at (1,8) {$\cdots$};
\node at (1,9) {$C_{m_1+3}$};
\filldraw(2,8) circle[radius=2pt]node[below]{$u$};
\filldraw(3,9) circle[radius=2pt];
\filldraw(4,9) circle[radius=2pt]node[above]{$v_1$};
\filldraw(5,9) circle[radius=2pt]node[above]{$v_2$};
\node at (6,9) {$\cdots$};
\filldraw(7,9) circle[radius=2pt]node[above]{$v_{m_2}$};
\draw(0,10)--(1,10);
\draw(1,10)--(2,10);
\draw(0,10)--(0,8);
\draw(0,8)--(0.5,8);
\draw(1.5,8)--(2,8);
\draw(2,10)--(3,9);
\draw(2,8)--(3,9);
\draw(3,9)--(4,9);
\draw(4,9)--(5,9);
\draw(5,9)--(5.5,9);
\draw(6.5,9)--(7,9);

\node at (-2,5) {$Y_{m_1,m_2,m_3}$:};
\filldraw(0,5) circle[radius=2pt]node[left]{$v$};
\filldraw(1,6) circle[radius=2pt]node[above]{$v_1$};
\filldraw(2,6) circle[radius=2pt]node[above]{$v_2$};
\node at (3,6) {$\cdots$};
\filldraw(4,6) circle[radius=2pt]node[above]{$v_{m_1}$};
\filldraw(1,5) circle[radius=2pt]node[above]{$u_1$};
\filldraw(2,5) circle[radius=2pt]node[above]{$u_2$};
\node at (3,5) {$\cdots$};
\filldraw(4,5) circle[radius=2pt]node[above]{$u_{m_2}$};
\filldraw(1,4) circle[radius=2pt]node[above]{$w_1$};
\filldraw(2,4) circle[radius=2pt]node[above]{$w_2$};
\node at (3,4) {$\cdots$};
\filldraw(4,4) circle[radius=2pt]node[above]{$w_{m_3}$};
\draw(0,5)--(1,6);
\draw(1,6)--(2,6);
\draw(2,6)--(2.5,6);
\draw(3.5,6)--(4,6);
\draw(0,5)--(1,5);
\draw(1,5)--(2,5);
\draw(2,5)--(2.5,5);
\draw(3.5,5)--(4,5);
\draw(0,5)--(1,4);
\draw(1,4)--(2,4);
\draw(2,4)--(2.5,4);
\draw(3.5,4)--(4,4);
\end{tikzpicture}
\caption{Graphs with $n_{G_i}(C_3)=0$ and $g_{i,3}=1$. All parameters are positive integers.}\label{fig:g01}
\end{figure}
\begin{figure}
\centering
\begin{tikzpicture}
\node at (-2,5) {$A_{m_1,m_2}$:};
\filldraw(0,5) circle[radius=2pt]node[left]{$v$};
\filldraw(1,6) circle[radius=2pt];
\filldraw(2,6) circle[radius=2pt]node[above]{$v_1$};
\filldraw(3,6) circle[radius=2pt]node[above]{$v_2$};
\node at (4,6) {$\cdots$};
\filldraw(5,6) circle[radius=2pt]node[above]{$v_{m_1}$};
\filldraw(1,4) circle[radius=2pt];
\filldraw(2,4) circle[radius=2pt]node[below]{$u_1$};
\filldraw(3,4) circle[radius=2pt]node[below]{$u_2$};
\node at (4,4) {$\cdots$};
\filldraw(5,4) circle[radius=2pt]node[below]{$u_{m_2}$};
\draw(0,5)--(1,6);
\draw(0,5)--(1,4);
\draw(1,6)--(2,6);
\draw(2,6)--(3,6);
\draw(3,6)--(3.5,6);
\draw(4.5,6)--(5,6);
\draw(1,6)--(1,4);
\draw(1,4)--(2,4);
\draw(2,4)--(3,4);
\draw(3,4)--(3.5,4);
\draw(4.5,4)--(5,4);

\node at (-2,1) {$B_{m_1,m_2,m_3}$:};
\filldraw(0,0) circle[radius=2pt];
\filldraw(0,2) circle[radius=2pt];
\filldraw(1,1) circle[radius=2pt]node[above]{$v$};
\filldraw(2,1) circle[radius=2pt]node[above]{$v_1$};
\node at (3,1) {$\cdots$};
\filldraw(4,1) circle[radius=2pt]node[above]{$v_{m_1}$};
\filldraw(5,1) circle[radius=2pt]node[right]{$u$};
\filldraw(5,2) circle[radius=2pt]node[above]{$u_1$};
\filldraw(6,2) circle[radius=2pt]node[above]{$u_2$};
\node at (7,2) {$\cdots$};
\filldraw(8,2) circle[radius=2pt]node[above]{$u_{m_2}$};
\filldraw(5,0) circle[radius=2pt]node[below]{$w_1$};
\filldraw(6,0) circle[radius=2pt]node[below]{$w_2$};
\node at (7,0) {$\cdots$};
\filldraw(8,0) circle[radius=2pt]node[below]{$w_{m_3}$};
\draw(0,0)--(0,2);
\draw(0,0)--(1,1);
\draw(0,2)--(1,1);
\draw(1,1)--(2,1);
\draw(2,1)--(2.5,1);
\draw(3.5,1)--(4,1);
\draw(4,1)--(5,1);
\draw(5,1)--(5,2);
\draw(5,2)--(6,2);
\draw(6,2)--(6.5,2);
\draw(7.5,2)--(8,2);
\draw(5,1)--(5,0);
\draw(5,0)--(6,0);
\draw(6,0)--(6.5,0);
\draw(7.5,0)--(8,0);

\node at (-2,-3) {$F^{(1)}_{m_1,m_2}$:};
\filldraw(0,-2) circle[radius=2pt];
\filldraw(0,-4) circle[radius=2pt];
\filldraw(1,-3) circle[radius=2pt]node[above]{$v$};
\filldraw(2,-3) circle[radius=2pt]node[above]{$v_1$};
\node at (3,-3) {$\cdots$};
\filldraw(4,-3) circle[radius=2pt]node[above]{$v_{m_1}$};
\filldraw(5,-3) circle[radius=2pt]node[above]{$u$};
\filldraw(6,-2) circle[radius=2pt];
\filldraw(7,-2) circle[radius=2pt];
\filldraw(8,-2) circle[radius=2pt];
\filldraw(8,-4) circle[radius=2pt];
\filldraw(6,-4) circle[radius=2pt];
\node at (7,-3) {$C_{m_2+3}$};
\node at (7,-4) {$\cdots$};
\draw(0,-2)--(0,-4);
\draw(0,-2)--(1,-3);
\draw(0,-4)--(1,-3);
\draw(1,-3)--(2,-3);
\draw(2,-3)--(2.5,-3);
\draw(3.5,-3)--(4,-3);
\draw(4,-3)--(5,-3);
\draw(5,-3)--(6,-2);
\draw(6,-2)--(7,-2);
\draw(7,-2)--(8,-2);
\draw(8,-2)--(8,-4);
\draw(5,-3)--(6,-4);
\draw(6,-4)--(6.5,-4);
\draw(7.5,-4)--(8,-4);

\node at (-2,-7) {$F^{(2)}_m$:};
\filldraw(0,-7) circle[radius=2pt]node[left]{$v$};
\filldraw(1,-6) circle[radius=2pt];
\filldraw(1,-8) circle[radius=2pt];
\filldraw(2,-6) circle[radius=2pt];
\filldraw(3,-6) circle[radius=2pt];
\filldraw(3,-8) circle[radius=2pt];
\node at (2,-8) {$\cdots$};
\node at (2,-7) {$C_{m+3}$};
\draw(0,-7)--(1,-6);
\draw(0,-7)--(1,-8);
\draw(1,-6)--(1,-8);
\draw(1,-6)--(2,-6);
\draw(2,-6)--(3,-6);
\draw(3,-6)--(3,-8);
\draw(1,-8)--(1.5,-8);
\draw(2.5,-8)--(3,-8);
\end{tikzpicture}
\caption{Graphs with $n_{G_i}(C_3)=1$ and $g_{i,3}=2$. For $B_{m_1,m_2,m_3}$ and $F^{(1)}_{m_1,m_2}$, if $m_1=0$ then the two vertices of degree~3 are adjacent. All other parameters are positive integers.}\label{fig:g12}
\end{figure}
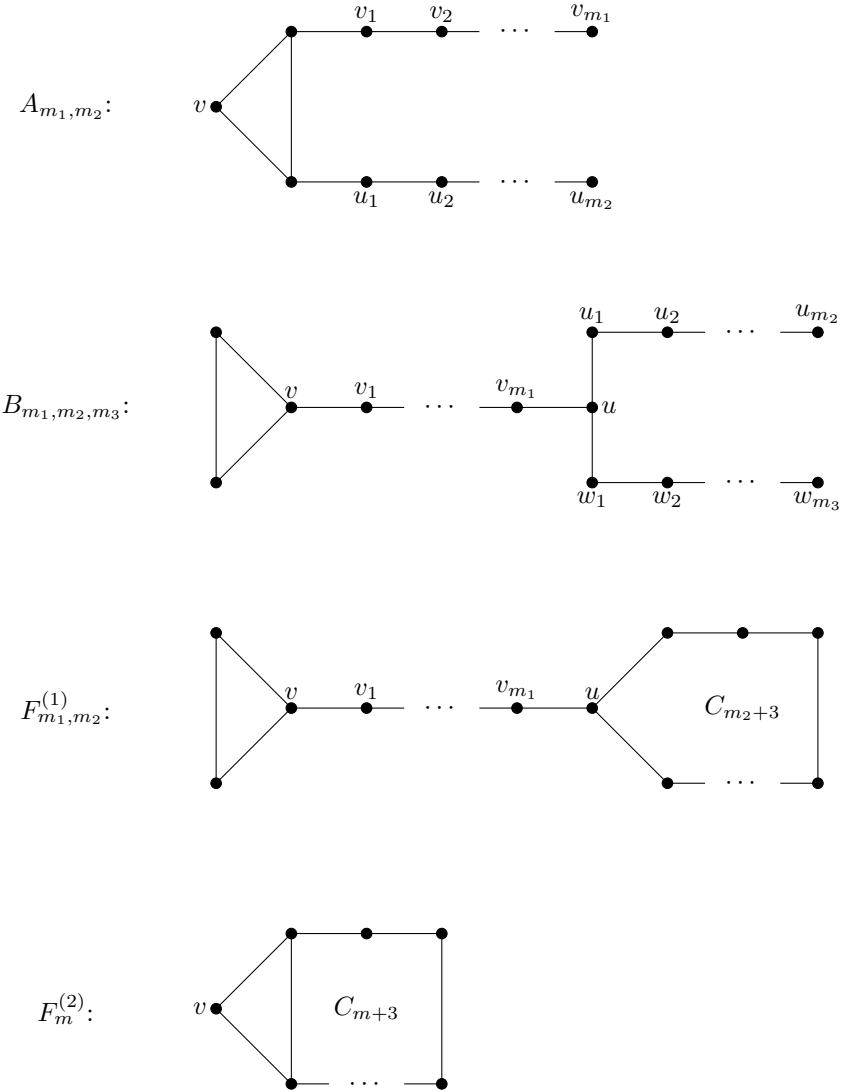
\begin{figure}
\centering
\begin{tikzpicture}
\node at (-2,9) {$K_4-e$:};
\filldraw(0,9) circle[radius=2pt];
\filldraw(1,10) circle[radius=2pt];
\filldraw(1,8) circle[radius=2pt];
\filldraw(2,9) circle[radius=2pt];
\draw(0,9)--(1,10);
\draw(0,9)--(1,8);
\draw(1,10)--(2,9);
\draw(1,8)--(2,9);
\draw(1,10)--(1,8);

\node at (-2,5) {$F^{(3)}_m$:};
\filldraw(0,6) circle[radius=2pt];
\filldraw(0,4) circle[radius=2pt];
\filldraw(1,5) circle[radius=2pt];
\filldraw(2,5) circle[radius=2pt]node[above]{$v_1$};
\node at (3,5) {$\cdots$};
\filldraw(4,5) circle[radius=2pt]node[above]{$v_m$};
\filldraw(5,5) circle[radius=2pt];
\filldraw(6,6) circle[radius=2pt];
\filldraw(6,4) circle[radius=2pt]node[right]{$v$};
\draw(0,6)--(0,4);
\draw(0,6)--(1,5);
\draw(0,4)--(1,5);
\draw(1,5)--(2,5);
\draw(2,5)--(2.5,5);
\draw(3.5,5)--(4,5);
\draw(4,5)--(5,5);
\draw(5,5)--(6,6);
\draw(5,5)--(6,4);
\draw(6,6)--(6,4);
\end{tikzpicture}
\caption{Graphs with $n_{G_i}(C_3)=2$ and $g_{i,3}=2$. For $F^{(3)}_m$, if $m=0$ then the two vertices of degree~3 are adjacent.}\label{fig:g22}
\end{figure}
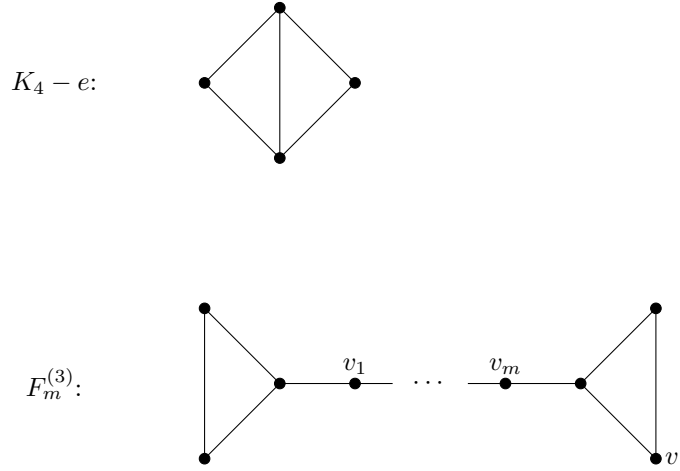
\begin{figure}
\centering
\begin{tikzpicture}
\node at (0,10.5) {$F^{(4)}_m$:};
\filldraw(0,9) circle[radius=2pt]node[left]{$v$};
\filldraw(1,10) circle[radius=2pt];
\filldraw(1,8) circle[radius=2pt];
\filldraw(2,9) circle[radius=2pt];
\filldraw(3,9) circle[radius=2pt]node[above]{$v_1$};
\node at (4,9) {$\cdots$};
\filldraw(5,9) circle[radius=2pt]node[above]{$v_m$};
\draw(0,9)--(1,10);
\draw(0,9)--(1,8);
\draw(1,10)--(2,9);
\draw(1,8)--(2,9);
\draw(1,10)--(1,8);
\draw(2,9)--(3,9);
\draw(3,9)--(3.5,9);
\draw(4.5,9)--(5,9);

\node at (0,6.5) {$F^{(5)}_{m_1,m_2}$:};
\filldraw(0,6) circle[radius=2pt];
\filldraw(0,4) circle[radius=2pt];
\filldraw(1,5) circle[radius=2pt]node[above]{$v$};
\filldraw(2,5) circle[radius=2pt]node[above]{$v_1$};
\node at (3,5) {$\cdots$};
\filldraw(4,5) circle[radius=2pt]node[above]{$v_{m_1}$};
\filldraw(5,5) circle[radius=2pt]node[above]{$u$};
\filldraw(6,6) circle[radius=2pt];
\filldraw(6,4) circle[radius=2pt];
\filldraw(7,6) circle[radius=2pt]node[below]{$u_1$};
\node at (8,6) {$\cdots$};
\filldraw(9,6) circle[radius=2pt]node[below]{$u_{m_2}$};
\draw(0,6)--(0,4);
\draw(0,6)--(1,5);
\draw(0,4)--(1,5);
\draw(1,5)--(2,5);
\draw(2,5)--(2.5,5);
\draw(3.5,5)--(4,5);
\draw(4,5)--(5,5);
\draw(5,5)--(6,6);
\draw(5,5)--(6,4);
\draw(6,6)--(6,4);
\draw(6,6)--(7,6);
\draw(7,6)--(7.5,6);
\draw(8.5,6)--(9,6);

\node at (0,2.5) {$F^{(6)}_{m_1,m_2,m_3}$:};
\filldraw(0,2) circle[radius=2pt];
\filldraw(0,0) circle[radius=2pt];
\filldraw(1,1) circle[radius=2pt]node[above]{$v$};
\filldraw(2,1) circle[radius=2pt]node[above]{$v_1$};
\node at (3,1) {$\cdots$};
\filldraw(4,1) circle[radius=2pt]node[above]{$v_{m_1}$};
\filldraw(5,1) circle[radius=2pt]node[above]{$u$};
\filldraw(6,1) circle[radius=2pt]node[above]{$u_1$};
\node at (7,1) {$\cdots$};
\filldraw(8,1) circle[radius=2pt]node[above]{$u_{m_2}$};
\filldraw(9,1) circle[radius=2pt]node[above]{$w$};
\filldraw(10,0) circle[radius=2pt];
\filldraw(10,2) circle[radius=2pt];
\filldraw(5,0) circle[radius=2pt]node[below]{$w_1$};
\node at (6,0) {$\cdots$};
\filldraw(7,0) circle[radius=2pt]node[below]{$w_{m_3}$};
\draw(0,2)--(0,0);
\draw(0,2)--(1,1);
\draw(0,0)--(1,1);
\draw(1,1)--(2,1);
\draw(2,1)--(2.5,1);
\draw(3.5,1)--(4,1);
\draw(4,1)--(5,1);
\draw(5,1)--(6,1);
\draw(6,1)--(6.5,1);
\draw(7.5,1)--(8,1);
\draw(8,1)--(9,1);
\draw(9,1)--(10,2);
\draw(9,1)--(10,0);
\draw(10,2)--(10,0);
\draw(5,1)--(5,0);
\draw(5,0)--(5.5,0);
\draw(6.5,0)--(7,0);
\end{tikzpicture}
\caption{Graphs with $n_{G_i}(C_3)=2$ and $g_{i,3}=3$. For $F^{(5)}_{m_1,m_2}$ and $F^{(6)}_{m_1,m_2,m_3}$, if $m_1=0$ then $v$ and $u$ are adjacent. For $F^{(6)}_{m_1,m_2,m_3}$, if $m_2=0$ then $u$ and $w$ are adjacent. All other parameters are positive integers.}\label{fig:g23}
\end{figure}
\begin{figure}
\centering
\begin{tikzpicture}
\node at (0,10.5) {$F^{(7)}_m$:};
\filldraw(0,9) circle[radius=2pt]node[left]{$w$};
\filldraw(1,10) circle[radius=2pt];
\filldraw(1,8) circle[radius=2pt];
\filldraw(2,9) circle[radius=2pt]node[above]{$v$};
\filldraw(3,9) circle[radius=2pt]node[above]{$v_1$};
\node at (4,9) {$\cdots$};
\filldraw(5,9) circle[radius=2pt]node[above]{$v_m$};
\filldraw(6,9) circle[radius=2pt]node[above]{$u$};
\filldraw(7,10) circle[radius=2pt];
\filldraw(7,8) circle[radius=2pt];
\draw(0,9)--(1,10);
\draw(0,9)--(1,8);
\draw(1,10)--(2,9);
\draw(1,8)--(2,9);
\draw(1,10)--(1,8);
\draw(2,9)--(3,9);
\draw(3,9)--(3.5,9);
\draw(4.5,9)--(5,9);
\draw(5,9)--(6,9);
\draw(6,9)--(7,10);
\draw(6,9)--(7,8);
\draw(7,10)--(7,8);

\node at (0,6.5) {$F^{(8)}_{m_1,m_2}$:};
\filldraw(0,6) circle[radius=2pt];
\filldraw(0,4) circle[radius=2pt];
\filldraw(1,5) circle[radius=2pt]node[above]{$v$};
\filldraw(2,5) circle[radius=2pt]node[above]{$v_1$};
\node at (3,5) {$\cdots$};
\filldraw(4,5) circle[radius=2pt]node[above]{$v_{m_1}$};
\filldraw(5,5) circle[radius=2pt]node[above]{$w$};
\filldraw(6,6) circle[radius=2pt]node[above]{$u$};
\filldraw(6,4) circle[radius=2pt];
\filldraw(7,6) circle[radius=2pt]node[below]{$u_1$};
\node at (8,6) {$\cdots$};
\filldraw(9,6) circle[radius=2pt]node[below]{$u_{m_2}$};
\filldraw(10,6) circle[radius=2pt]node[above]{$u^\prime$};
\filldraw(11,7) circle[radius=2pt];
\filldraw(11,5) circle[radius=2pt];
\draw(0,6)--(0,4);
\draw(0,6)--(1,5);
\draw(0,4)--(1,5);
\draw(1,5)--(2,5);
\draw(2,5)--(2.5,5);
\draw(3.5,5)--(4,5);
\draw(4,5)--(5,5);
\draw(5,5)--(6,6);
\draw(5,5)--(6,4);
\draw(6,6)--(6,4);
\draw(6,6)--(7,6);
\draw(7,6)--(7.5,6);
\draw(8.5,6)--(9,6);
\draw(9,6)--(10,6);
\draw(10,6)--(11,7);
\draw(10,6)--(11,5);
\draw(11,7)--(11,5);

\node at (0,2.5) {$F^{(9)}_{m_1,m_2,m_3}$:};
\filldraw(0,2) circle[radius=2pt];
\filldraw(0,0) circle[radius=2pt];
\filldraw(1,1) circle[radius=2pt]node[above]{$v$};
\filldraw(2,1) circle[radius=2pt]node[above]{$v_1$};
\node at (3,1) {$\cdots$};
\filldraw(4,1) circle[radius=2pt]node[above]{$v_{m_1}$};
\filldraw(5,1) circle[radius=2pt]node[above]{$w$};
\filldraw(6,1) circle[radius=2pt]node[above]{$u_1$};
\node at (7,1) {$\cdots$};
\filldraw(8,1) circle[radius=2pt]node[above]{$u_{m_2}$};
\filldraw(9,1) circle[radius=2pt]node[above]{$u$};
\filldraw(10,0) circle[radius=2pt];
\filldraw(10,2) circle[radius=2pt];
\filldraw(5,0) circle[radius=2pt]node[right]{$w_1$};
\node at (5,-1) {$\vdots$};
\filldraw(5,-2) circle[radius=2pt]node[right]{$w_{m_3}$};
\filldraw(5,-3) circle[radius=2pt]node[right]{$w^\prime$};
\filldraw(4,-4) circle[radius=2pt];
\filldraw(6,-4) circle[radius=2pt];
\draw(0,2)--(0,0);
\draw(0,2)--(1,1);
\draw(0,0)--(1,1);
\draw(1,1)--(2,1);
\draw(2,1)--(2.5,1);
\draw(3.5,1)--(4,1);
\draw(4,1)--(5,1);
\draw(5,1)--(6,1);
\draw(6,1)--(6.5,1);
\draw(7.5,1)--(8,1);
\draw(8,1)--(9,1);
\draw(9,1)--(10,2);
\draw(9,1)--(10,0);
\draw(10,2)--(10,0);
\draw(5,1)--(5,0);
\draw(5,0)--(5,-0.5);
\draw(5,-1.5)--(5,-2);
\draw(5,-2)--(5,-3);
\draw(5,-3)--(4,-4);
\draw(5,-3)--(6,-4);
\draw(4,-4)--(6,-4);
\end{tikzpicture}
\caption{Graphs with $n_{G_i}(C_3)=3$ and $g_{i,3}=4$. For $F^{(7)}_m$, if $m=0$ then $v$ and $u$ are adjacent. For $F^{(8)}_{m_1,m_2}$ and $F^{(9)}_{m_1,m_2,m_3}$, if $m_1=0$ then $v$ and $w$ are adjacent. For $F^{(8)}_{m_1,m_2}$, if $m_2=0$ then $u$ and $u^\prime$ are adjacent. For $F^{(9)}_{m_1,m_2,m_3}$, if $m_2=0$ then $u$ and $w$ are adjacent, and if $m_3=0$ then $w$ and $w^\prime$ are adjacent.}\label{fig:g34}
\end{figure}
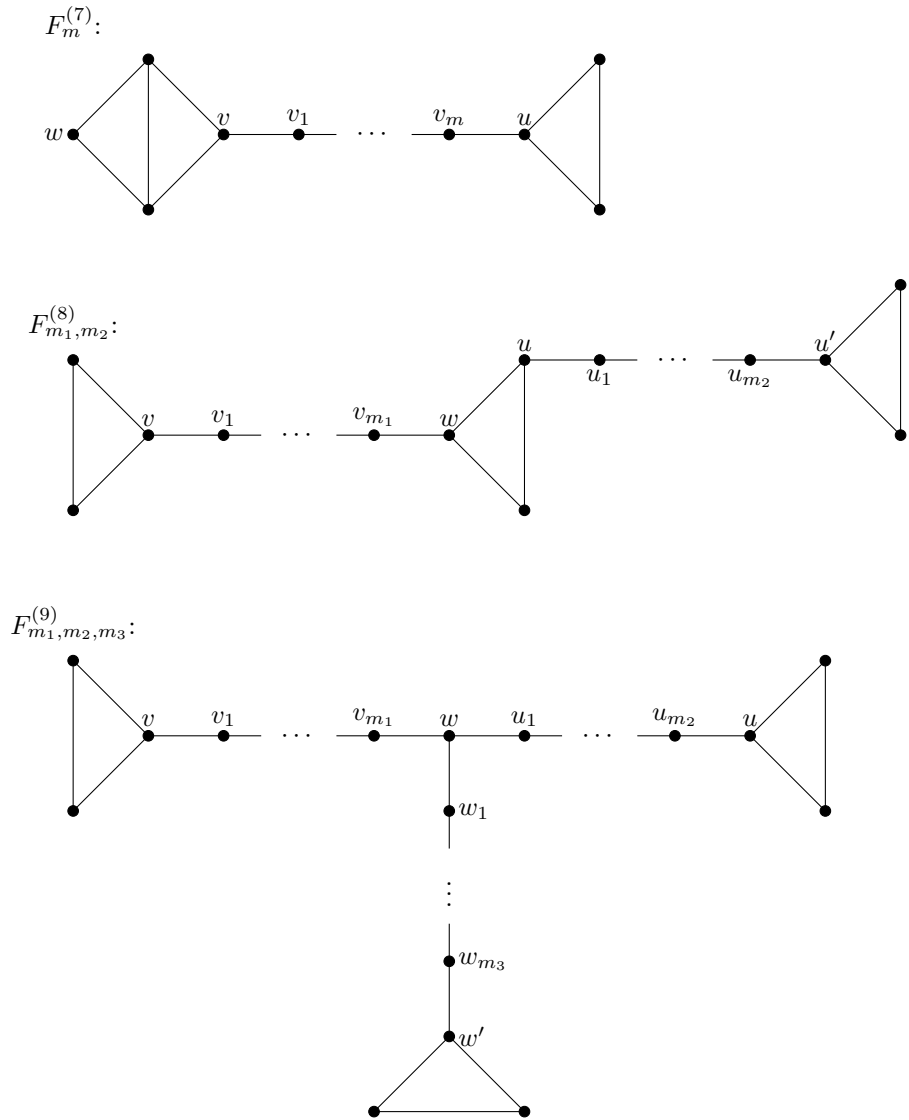

\section{Candidate connected components of graphs $G$ in $\mathcal{I}(P_n)$}

From Proposition~\ref{prop:pathroot}, we find that the roots of $I(P_n,x)$ are all real and smaller than $-\frac{1}{4}$. Therefore, if we can show that any of the graphs have independence roots which are not real, or real numbers greater than $-\frac{1}{4}$, then they are eliminated as possible connected components of $G$. In particular, since for any graph $G$, the value of $I(G,x)$ at $x=0$ is $1$, if we can show that the value of $I(G_i,x)$ at $x=-\frac{1}{4}$ is negative, then $I(G_i,x)$ has a root in the interval $\left[-\frac{1}{4},0\right)$. 

We have one tool that we will use repeatedly in this section. Suppose we have a sequence of graphs $F_0, F_1, \ldots$ such that
\begin{equation}\label{eq:recur}
I(F_m,x)=I(F_{m-1},x)+xI(F_{m-2},x)    
\end{equation}
for $m\geq 2$. If we let $\tau_m$ be the numerical value of $I\left(F_m,-\frac{1}{4}\right)$, then $\tau_m$ obeys the recurrence relation
$$\tau_m = \tau_{m-1} - \frac{1}{4}\tau_{m-2}.$$
The solution to this recurrence relation is
\begin{equation}\label{eq:recursol}
\tau_m = \frac{Um+V}{2^m} 
\end{equation}
where $U$ and $V$ are determined by the initial values of $\tau_m$.
\begin{prop}\label{prop:recur1}
If $\tau_m = \tau_{m+1}$ for some value of $m$, then all subsequent terms will have the same sign as $\tau_m$. 
\end{prop}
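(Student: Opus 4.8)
The plan is to read off the conclusion directly from the explicit formula (\ref{eq:recursol}). Suppose $\tau_m=\tau_{m+1}$ for some $m$. Substituting into $\tau_j=(Uj+V)/2^j$ gives $(Um+V)/2^m=(U(m+1)+V)/2^{m+1}$; clearing denominators yields $2(Um+V)=U(m+1)+V$, which simplifies to $Um+V=U$, i.e.\ $V=U(1-m)$.

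Feeding this value of $V$ back into (\ref{eq:recursol}) produces $\tau_k=U(k+1-m)/2^k$ for every index $k$. For $k\ge m$ the integer factor $k+1-m$ is at least $1$, and $2^k>0$, so $\tau_k$ has the same sign as $U$; in particular $\tau_m=U/2^m$ also has the sign of $U$. Hence $\tau_k$ and $\tau_m$ share the same sign for all $k\ge m$, which is the assertion. The degenerate case $U=0$ is covered as well: then $\tau_k=0$ for every $k$, so all the terms are (trivially) of the same sign.

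I do not anticipate any real obstacle here: once (\ref{eq:recursol}) is available the argument is a single substitution, the only points needing a word of care being the $\tau_m=0$ case and the observation that the claim concerns indices $k\ge m$ only (the formula $\tau_k=U(k+1-m)/2^k$ is valid for $k<m$ too, but there the sign can flip, which is irrelevant). If one preferred to avoid invoking (\ref{eq:recursol}), one could instead normalise $\tau_m=\tau_{m+1}=a$, write $\tau_{m+j}=c_j a$ with $c_0=c_1=1$ and $c_j=c_{j-1}-\tfrac14 c_{j-2}$, and prove $c_j>0$ for all $j$ by induction, using that the successive ratios stay above $\tfrac12$ (hence above $\tfrac14$) so that $c_{j+2}=c_{j+1}-\tfrac14 c_j>0$; but this merely re-derives the closed form, so the direct route is preferable.
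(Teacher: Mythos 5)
Your argument is essentially identical to the paper's: both substitute the hypothesis into the closed form~(\ref{eq:recursol}) to deduce $V=U(1-m)$, and then read off that $\tau_{m+k}=U(k+1)/2^{m+k}$ shares the sign of $U$, hence of $\tau_m$. Your explicit remark on the degenerate case $U=0$ is a small addition the paper leaves implicit, but the proof is the same.
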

\begin{proof}
Let $\tau_m = \tau_{m+1} = \tau$. Then
$$\tau = \frac{Um+V}{2^m} = \frac{U(m+1)+V}{2^{m+1}}$$
so $V=U(1-m)$ and, therefore,
$$\tau = \frac{U}{2^m},$$
which is of the same sign as $U$. For $k\geq 2$,
$$\tau_{m+k} = \frac{U(m+k)+V}{2^{m+k}} = \frac{U(k+1)}{2^{m+k}},$$
which is of the same sign as $U$, and hence, as $\tau$.
\end{proof}
\begin{prop}\label{prop:recur2}
For the path $P_n$, the cycle graph $C_n$, and the graph $D_n$,
\begin{eqnarray*}
I\left(P_n,-\frac{1}{4}\right) & = & \frac{n+2}{2^n}, \\
I\left(C_n,-\frac{1}{4}\right) = I\left(D_n,-\frac{1}{4}\right) & = & \frac{1}{2^{n-1}}.
\end{eqnarray*}
\end{prop}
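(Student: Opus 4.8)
The plan is to realise each of $P_n$, $C_n$ and $D_n$ as a term of a sequence satisfying the recurrence~\eqref{eq:recur}, and then to read off the value at $-\tfrac14$ directly from the closed form~\eqref{eq:recursol}.

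First I would treat $P_n$. Applying Proposition~\ref{prop:delvert} to an endpoint $u$ of $P_n$ gives $P_n-u\cong P_{n-1}$ and $P_n-N[u]\cong P_{n-2}$, so $I(P_n,x)=I(P_{n-1},x)+xI(P_{n-2},x)$; thus the sequence $F_m=P_m$ (with the convention that $P_0$ is the empty graph, $I(P_0,x)=1$) fits~\eqref{eq:recur}. Hence $\tau_m:=I\!\left(P_m,-\tfrac14\right)$ has the form $(Um+V)/2^m$, and substituting the two base values $\tau_0=1$ and $\tau_1=I\!\left(P_1,-\tfrac14\right)=\tfrac34$ pins down $U$ and $V$ and yields the stated closed form for $I\!\left(P_n,-\tfrac14\right)$.

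Next, for $C_n$: Proposition~\ref{prop:delvert} applied to any vertex $u$ of $C_n$ gives $C_n-u\cong P_{n-1}$ and $C_n-N[u]\cong P_{n-3}$, so $I(C_n,x)=I(P_{n-1},x)+xI(P_{n-3},x)$. Evaluating at $x=-\tfrac14$ and inserting the path formula just obtained, the two terms combine after a one-line simplification to give $I\!\left(C_n,-\tfrac14\right)$. For $D_n$ I would simply invoke Proposition~\ref{prop:cndn}, which already asserts $I(D_n,x)=I(C_n,x)$ for $n\ge 4$, so the two values at $-\tfrac14$ agree; alternatively one can argue directly from Figure~\ref{fig:dn}, since deleting the vertex $a$ gives $D_n-a\cong P_{n-1}$ and $D_n-N[a]\cong P_{n-3}$, the same recurrence as for $C_n$.

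There is no substantial obstacle here: the content is entirely bookkeeping built on the deletion identity and the already-solved recurrence. The only point requiring care is the small-$n$ and degenerate behaviour --- $P_0$, $P_1$, and the coincidences $D_3=C_3$ and $I(D_4,x)=I(C_4,x)$ --- so I must make sure the deletion identities are invoked only in ranges of $n$ for which $P_{n-1}$, $P_{n-2}$, $P_{n-3}$ all make sense under the convention $P_0=\varnothing$, and that the initial data fed into~\eqref{eq:recursol} genuinely satisfy the recurrence rather than being artifacts of an index that is too small.
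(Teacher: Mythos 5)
Your approach is essentially the paper's own: the proof there likewise observes that $P_n$, $C_n$ and $D_n$ all satisfy the recurrence~(\ref{eq:recur}) and determines $U$ and $V$ in~(\ref{eq:recursol}) from the first two values, and your handling of $C_n$ and $D_n$ by substituting the path formula into $I(C_n,x)=I(P_{n-1},x)+xI(P_{n-3},x)$ is only a cosmetic variation on that. One caution: carrying your own arithmetic through ($\tau_0=1$, $\tau_1=\tfrac34$) gives $I\left(P_n,-\tfrac14\right)=\frac{n+2}{2^{n+1}}$, \emph{not} the displayed $\frac{n+2}{2^{n}}$ --- the exponent in the proposition as printed is a typo (the value actually used later in the paper, e.g.\ in the $Y_{m_1,m_2,m_3}$ computation, is $\frac{m+2}{2^{m+1}}$), and it is this corrected value that makes your cycle calculation come out to $\frac{1}{2^{n-1}}$, so you should not claim the base values reproduce the ``stated'' closed form without flagging the discrepancy.
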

\begin{proof}
The graphs $P_n$, $C_n$ and $D_n$ each satisfy the recurrence relation~(\ref{eq:recur}). Therefore, the value of their independence polynomial at $x=-\frac{1}{4}$ follows the relation in~(\ref{eq:recursol}). Working out the first two values to find $U$ and $V$ in each case gives us the result.
\end{proof}
For convenience, we define $D_3=C_3$, $D_2=P_2$ and $P_0$ to be the empty graph so that $I(P_0,x)=1$.

\subsection{Eliminating the $F^{(j)}$ graphs}

It turns out that all the graphs labelled as $F^{(j)}$ (where $1\leq j \leq 9$) with various subscripts, can be eliminated because of the location of the roots of their independence polynomials.

$F^{(1)}_{m_1,m_2}$ is independence equivalent to $F^{(5)}_{m_1,m_2}$, since deleting $u$ from both graphs gives us independence equivalent graphs, and deleting $N[u]$ from both graphs gives us isomorphic graphs. Therefore, we will discuss $F^{(1)}_{m_1,m_2}$ when we discuss $F^{(5)}_{m_1,m_2}$ later.

$F^{(2)}_m$ is independence equivalent to $F^{(4)}_m$, since deleting $v$ from both graphs gives us independence equivalent graphs, and deleting $N[v]$ from both graphs gives us isomorphic graphs. Therefore, we will discuss $F^{(2)}_m$ when we discuss $F^{(4)}_m$ later.

In the case of $F^{(3)}_m$, we can apply Proposition~\ref{prop:delvert} on $v$ to find that
$$I\left(F^{(3)}_m,x\right)  =  I\left(D_{m+5},x\right)+xI\left(D_{m+3},x\right).$$
Therefore,
\begin{eqnarray*}
I\left(F^{(3)}_m,-\frac{1}{4}\right) & = &  I\left(D_{m+5},-\frac{1}{4}\right)+xI\left(D_{m+3},-\frac{1}{4}\right)\\
& = & \frac{1}{2^{m+4}} - \frac{1}{4}\frac{1}{2^{m+2}}\\
& = & 0.
\end{eqnarray*}
Therefore, $-\frac{1}{4}$ is a root of $I\left(F^{(3)}_m,x\right)$ for all non-negative integers $m$. Thus, it cannot be a factor of $I(P_n,x)$ for any $n$.

In the case of $F^{(4)}_m$, we find that, for $m\geq 3$, we can apply Proposition~\ref{prop:delvert} on the vertex of degree~1 to find that $I\left(F^{(4)}_m,x\right)$ satisfies the recurrence relation~(\ref{eq:recur}). We also find that $\tau_1=0$ and $\tau_2=\tau_3=-\frac{1}{64}<0$. By Proposition~\ref{prop:recur1}, $\tau_m\leq 0$ for all $m\geq 1$. Therefore, $I\left(F^{(4)}_m,x\right)$ always has a root in the interval $\left[-\frac{1}{4},0\right)$ and cannot be a factor of $I\left(P_n,x\right)$ for any $n$.

In the case of $F^{(5)}_{m_1,m_2}$, applying Proposition~\ref{prop:delvert} on $u_{m_2}$, we have
$$I\left(F^{(5)}_{m_1,1},x\right) = I\left(F^{(3)}_{m_1},x\right) + xI\left(D_{m_1+5},x\right)$$
for $m_2=1$,
$$I\left(F^{(5)}_{m_1,2},x\right) = I\left(F^{(5)}_{m_1,1},x\right) + xI\left(F^{(3)}_{m_1},x\right)$$
for $m_2=2$,
$$I\left(F^{(5)}_{m_1,m_2},x\right) = I\left(F^{(5)}_{m_1,m_2-1},x\right) + xI\left(F^{(5)}_{m_1,m_2-2},x\right)$$
for $m_2\geq 3$, so that $I\left(F^{(5)}_{m_1,m_2},x\right)$ satisfies the recurrence relation~(\ref{eq:recur}) where $m_2$ is the index. Substituting $x=-\frac{1}{4}$, we find that $\tau_1=\tau_2=-2^{-(m_1+6)} < 0$. By Proposition~\ref{prop:recur1}, $\tau_{m_2}$ is negative for all $m_2\geq 1$. Therefore, $I\left(F^{(5)}_{m_1,m_2},x\right)$ always has a root in the interval $\left[-\frac{1}{4},0\right)$ and cannot be a factor of $I\left(P_n,x\right)$ for any $n$.

In the case of $F^{(6)}_{m_1,m_2,m_3}$, applying Proposition~\ref{prop:delvert} on $w_{m_3}$, we have
$$I\left(F^{(6)}_{m_1,m_2,1},x\right) = I\left(F^{(3)}_{m_1+m_2+1},x\right) + xI\left(D_{m_1+3},x)I(D_{m_2+3},x\right)$$
for $m_3=1$,
$$I\left(F^{(6)}_{m_1,m_2,2},x\right) = I\left(F^{(6)}_{m_1,m_2,1},x\right) + xI\left(F^{(3)}_{m_1+m_2+1},x\right)$$
for $m_3=2$,
$$I\left(F^{(6)}_{m_1,m_2,m_3},x\right) = I\left(F^{(6)}_{m_1,m_2,m_3-1},x\right) + xI\left(F^{(6)}_{m_1,m_2,m_3-2},x\right)$$
for $m_3\geq 3$, so that $I\left(F^{(6)}_{m_1,m_2,m_3},x\right)$ satisfies the recurrence relation~(\ref{eq:recur}) where $m_3$ is the index. Substituting $x=-\frac{1}{4}$, we find that $\tau_1=\tau_2=-2^{-(m_1+m_2+6)}<0$. By Proposition~\ref{prop:recur1}, $\tau_{m_3}$ is negative for all $m_3\geq 1$. Therefore, $I\left(F^{(6)}_{m_1,m_2,m_3},x\right)$ always has a root in the interval $\left[-\frac{1}{4},0\right)$ and cannot be a factor of $I\left(P_n,x\right)$ for any $n$.

In the case of $F^{(7)}_m$, applying Proposition~\ref{prop:delvert} on $w$, we find that
$$I\left(F^{(7)}_{m},x\right) = I\left(F^{(3)}_{m},x\right) + xI\left(D_{m+3},x\right).$$
Substituting $x=-\frac{1}{4}$, we have
$$I\left(F^{(7)}_{m},-\frac{1}{4}\right) = I\left(F^{(3)}_{m},-\frac{1}{4}\right) + -\frac{1}{4}I\left(D_{m+3},-\frac{1}{4}\right) = -\frac{1}{2^{m+4}}<0.$$
Therefore, $I\left(F^{(7)}_{m},x\right)$ has a root in the interval $\left[-\frac{1}{4},0\right)$ and cannot be a factor of $I\left(P_n,x\right)$ for any $n$.

In the case of $F^{(8)}_{m_1,m_2}$, if $m_1=m_2=0$, then we can explicitly calculate
$$I\left(F^{(8)}_{0,0},x\right)=1+9x+25x^2+21x^3=(1+3x)(1+6x+7x^2)$$
which turns out to have a root $\frac{1}{7}\left(\sqrt{2}-3\right)\in\left[-\frac{1}{4},1\right).$

Otherwise, without loss of generality, suppose that $m_2\geq 1$. Applying Proposition~\ref{prop:delvert} on $u_1$, we have
$$I\left(F^{(8)}_{m_1,m_2},x\right) = I\left(F^{(3)}_{m_1},x\right)I\left(D_{m_2+2},x\right) + xI\left(D_{m_1+5},x\right)I\left(D_{m_2+1},x\right).$$
Substituting $x=-\frac{1}{4}$, we have
\begin{eqnarray*}
I\left(F^{(8)}_{m_1,m_2},-\frac{1}{4}\right) & = & I\left(F^{(3)}_{m_1},-\frac{1}{4}\right)I\left(D_{m_2+2},-\frac{1}{4}\right)\\
& &\qquad-\frac{1}{4}I\left(D_{m_1+5},-\frac{1}{4}\right)I\left(D_{m_2+1},-\frac{1}{4}\right)\\
& = & -\frac{1}{2^{m_1+m_2+6}}\\
& < & 0.
\end{eqnarray*}
Therefore, $I\left(F^{(8)}_{m_1,m_2},x\right)$ has a root in the interval $\left[-\frac{1}{4},0\right)$. In both cases, $I\left(F^{(8)}_{m_1,m_2},x\right)$ cannot be a factor of $I\left(P_n,x\right)$ for any $n$.

In the case of $F^{(9)}_{m_1,m_2,m_3}$, if $m_1=m_2=m_3=0$, then we can explicitly calculate
$$I\left(F^{(9)}_{0,0,0},x\right)=1+10x+33x^2+39x^3+8x^4$$
which has two non-real roots and thus cannot be a factor of $I\left(P_n,x\right)$.

Otherwise, without loss of generality, suppose that $m_3\geq 1$. Applying Proposition~\ref{prop:delvert} on $w_1$, we have
\begin{eqnarray*}
I\left(F^{(9)}_{m_1,m_2,m_3},x\right) & = & I\left(F^{(3)}_{m_1+m_2+1},x\right)I\left(D_{m_3+2},x\right) \\
& & \qquad +xI\left(D_{m_1+3},x\right)I\left(D_{m_2+3},x\right)I\left(D_{m_3+1},x\right).
\end{eqnarray*}
Substituting $x=-\frac{1}{4}$, we have
\begin{eqnarray*}
I\left(F^{(9)}_{m_1,m_2,m_3},-\frac{1}{4}\right) & = & I\left(F^{(3)}_{m_1+m_2+1},-\frac{1}{4}\right)I\left(D_{m_3+2},-\frac{1}{4}\right) \\
& & -\frac{1}{4}I\left(D_{m_1+3},-\frac{1}{4}\right)I\left(D_{m_2+3},-\frac{1}{4}\right)I\left(D_{m_3+1},-\frac{1}{4}\right)\\
& = & -\frac{1}{2^{m_1+m_2+m_3+6}}\\
& < & 0.
\end{eqnarray*}
Therefore, $I\left(F^{(9)}_{m_1,m_2,m_3},x\right)$ has a root in the interval $\left[-\frac{1}{4},0\right)$. In both cases, $I\left(F^{(9)}_{m_1,m_2,m_3},x\right)$ cannot be a factor of $I\left(P_n,x\right)$ for any $n$.

\subsection{The graph $Y_{m_1,m_2,m_3}$}

In this subsection, we will try to find values of $m_1$, $m_2$ and $m_3$ such that $Y_{m_1,m_2,m_3}$ could potentially be a connected component of a graph $G\in\mathcal{I}(P_n)$.

By applying Proposition~\ref{prop:delvert} on $v$, we find that
\begin{eqnarray*}
I(Y_{m_1,m_2,m_3},x) & = & I(P_{m_1},x)I(P_{m_2},x)I(P_{m_3},x) \\
& &\qquad +x(P_{m_1-1},x)I(P_{m_2-1},x)I(P_{m_3-1},x).
\end{eqnarray*}
Substituting $x=-\frac{1}{4}$, and applying Proposition~\ref{prop:recur2},
\begin{eqnarray*}
& & I\left(Y_{m_1,m_2,m_3},-\frac{1}{4}\right) \\ 
& = & \left(\frac{m_1+2}{2^{m_1+1}}\right)\left(\frac{m_2+2}{2^{m_2+1}}\right)\left(\frac{m_3+2}{2^{m_3+1}}\right) -\frac{1}{4}\left(\frac{m_1+1}{2^{m_1}}\right)\left(\frac{m_2+1}{2^{m_2}}\right)\left(\frac{m_3+1}{2^{m_3}}\right)\\
& = & \frac{(m_1+2)(m_2+2)(m_3+2)-2(m_1+1)(m_2+1)(m_3+1)}{2^{m_1+m_2+m_3+3}}
\end{eqnarray*}
Since the denominator is positive, we only need to be concerned with the sign of the numerator. If the numerator is negative, then $I(Y_{m_1,m_2,m_3},x)$ has a root in the interval $\left[-\frac{1}{4},0\right)$ and hence cannot be a factor of $I\left(P_n,x\right)$ for any $n$. Therefore, we wish to find values of $m_1, m_2, m_3$ for which the numerator is positive. Once we find those, we find the roots of $I(Y_{m_1,m_2,m_3},x)$ to ensure that all of them lie in the interval $(-\infty,-\frac{1}{4})$, and only then can $Y_{m_1,m_2,m_3}$ be considered a candidate connected component of $G$. Now,
$$(m_1+2)(m_2+2)(m_3+2)-2(m_1+1)(m_2+1)(m_3+1) > 0$$
is true if and only if
$$m_1m_2m_3 < 2(m_1 + m_2 + m_3)+6$$
or
$$m_1(m_2m_3-2) < 2(m_2 + m_3 + 3).$$
Without loss of generality, suppose that $m_1\geq m_2\geq m_3$.

If $m_3\geq 2$ then $m_2m_3-2>0$ so we can rearrange the inequality to give us
\begin{equation}\label{eq:m1}
m_1 < \frac{2(m_2+m_3+3)}{m_2m_3-2} = \frac{2}{m_3}+\frac{2m_3+\frac{4}{m_3}+6}{m_2m_3-2}. 
\end{equation}
If we treat $m_3$ as a constant, the right side of the inequality decreases as $m_2$ increases. Hence, its maximum value occurs when at the smallest value of $m_2$, that is, $m_2=m_3$. Also, since $m_3 \leq m_1$, we see that $m_3$ must satisfy
$$m_3 < \frac{2}{m_3}+\frac{2m_3+\frac{4}{m_3}+6}{m_3^2-2}.$$
For positive $m_3$, the solutions are in the interval $m_3\in\left(0, \sqrt[3]{2}+\sqrt[3]{4}\right]$. Since we are working under the assumption that $m_3\geq 2$, the only possible value for $m_3$ here is $2$. In that case, our inequality~(\ref{eq:m1}) becomes
$$m_1 < \frac{2m_2 + 10}{2m_2-2}=1+\frac{12}{2m_2-2}.$$
When $m_2=2$, we have $m_1<7$ so the possible values of $m_1$ are $2,3,4,5,6$. When $m_2=3$, we have $m_1<4$ so the only possible value of $m_1$ is $3$. For $m_2 \geq 4$ we find that $m_1 < 4$, a contradiction. By directly calculating $I\left(Y_{m_1,m_2,m_3},x\right)$ in these cases, we find that only $Y_{4,2,2}$, $Y_{3,3,2}$ and $Y_{3,2,2}$ have all roots in the interval $(-\infty,-\frac{1}{4})$.

If $m_3=1$ and $m_2\geq 3$, we still have $m_2m_3-2>0$ so the inequality~(\ref{eq:m1}) still holds and becomes
$$m_1 < \frac{2m_2+8}{m_2-2} = 2+\frac{12}{m_2-2}.$$
When $m_2 = 3$, we have $m_1<14$. When $m_2 = 4$, we have $m_1 < 8$. When $m_2 = 5$, we have $m_1 < 6$ For $m_2 \geq 6$ we find that $m_1 < 5$, a contradiction.  By directly calculating $I\left(Y_{m_1,m_2,m_3},x\right)$ in these cases, we find that only $Y_{5,4,1}$, $Y_{9,3,1}$, $Y_{7,3,1}$ and $Y_{4,3,1}$ have all roots in the interval $(-\infty,-\frac{1}{4})$.

If $m_3=1$ and $m_2=2$, it turns out that $Y_{m_1,2,1}$ is independence equivalent to $P_1\cup C_{m_1+3}$. This can be proven by applying Proposition~\ref{prop:deledge} on the edge between the vertex of degree~3 and the leaf in $Y_{m_1,2,1}$, and any edge in $P_1\cup C_{m_1+3}$. In both graphs, deleting the edge gives $P_1\cup P_{n+3}$, while deleting the neighbourhoods of both vertices incident to the edge gives $P_1\cup P_{n-1}$. Therefore, since $P_1$ and cycle graphs can clearly be connected components of $G$, all graphs of the form $Y_{m_1,2,1}$ should be considered.

The final case, $m_3=1$ and $m_2=1$, is the most complicated. Let $p_m(x)=I(Y_{m,1,1},x)$ for $m\geq 1$. 
The recurrence relation~(\ref{eq:recur}) applies to $p_m(x)$, so we have
\begin{eqnarray*}
p_1(x) & = & 1 + 4x + 3x^2 + x^3, \\
p_2(x) & = & 1 + 5x + 6x^2 + 2x^3, \\
p_m(x) & = & p_{m-1}(x) + xp_{m-2}(x)
\end{eqnarray*}
for $m\geq 2$. (This can be seen by applying Proposition~\ref{prop:delvert} on the leaf not adjacent to the vertex of degree~3.)

The solution to the recurrence relation for $p_m(x)$ is
$$p_m(x)=A(x)\left(\frac{1+\sqrt{1+4x}}{2}\right)^m+B(x)\left(\frac{1-\sqrt{1+4x}}{2}\right)^m$$
where
\begin{eqnarray*}
A(x) & = & \frac{1}{2}\left(1 + 3x + x^2 + \frac{1 + 5x + 5x^2 + 2x^3}{\sqrt{1+4x}}\right),\\
B(x) & = & \frac{1}{2}\left(1 + 3x + x^2 - \frac{1 + 5x + 5x^2 + 2x^3}{\sqrt{1+4x}}\right).
\end{eqnarray*}

We wish to know how many real roots $p_m(x)$ has that are real and less than $-\frac{1}{4}$. The equation $p_m(x)=0$ can be rearranged into
$$\left(\frac{1+\sqrt{1+4x}}{1-\sqrt{1+4x}}\right)^m=-\frac{B(x)}{A(x)}.$$
Now, if $x<-\frac{1}{4}$, then $\sqrt{1+4x}$ will be purely imaginary and we can let $\sqrt{1+4x}=ki$ for some positive real number $k$. Then we can substitute $x=-\frac{1}{4}(1+k^2)$ and solve for (positive) $k$ instead. Our equation now becomes
$$\left(\frac{1+ki}{1-ki}\right)^m=\frac{1+5x+5x^2+2x^3-(1+3x+x^2)ki}{1+5x+5x^2+2x^3+(1+3x+x^2)ki}.$$
Since both $x$ and $k$ are real numbers, the numerator and denominator on both sides are complex conjugates. Therefore, the modulus on both sides is $1$ and we only need to equate the arguments. Doing so and substituting $x=-\frac{1}{4}(1+k^2)$ gives

\begin{eqnarray*}
m \tan^{-1}(k) & = & \tan^{-1}\left(-\frac{2k(5 - 10k^2 + k^4)}{1 - 23k^2 + 7k^4 - k^6}\right)\\
\tan\left(m \tan^{-1}(k)\right) & = & -\frac{2k(5 - 10k^2 + k^4)}{1 - 23k^2 + 7k^4 - k^6}.
\end{eqnarray*}

Now, the function
$$F(k) = -\frac{2k(5 - 10k^2 + k^4)}{1 - 23k^2 + 7k^4 - k^6}$$
for positive $k$ is not defined when $1 - 23k^2 + 7k^4 - k^6=0$, which has one positive root $\alpha$ (which is approximately $0.21$). For $k\in(0,\alpha)$, $F(k)$ is strictly decreasing from $0$ to $-\infty$. For $k>\alpha$, the behaviour of $F(k)$ is more complicated. It has a local minimum at $\beta\approx1.888$ and a local maximum at $\gamma\approx4.637$. For $k\in(\alpha,\beta]$, $F(k)$ is strictly decreasing from $+\infty$ to approximately $-1.814$. For $k\in[\beta,\gamma]$, $F(k)$ is strictly increasing from approximately $-1.814$ to approximately $0.325$, and for $k\geq\gamma$, it is strictly decreasing from $0.325$ to approach $0$ asymptotically. The only region where $F(k)$ is increasing is $[\beta,\gamma]$, and in this region there is only one point of inflection, at approximately $k=2.358$. This is where $F^\prime(k)$ reaches its maximum value of approximately $2.27$.

The function
$$G_m(k) = \tan\left(m\tan^{-1}(k)\right)$$
for positive $k$ is not defined when
\begin{equation}\label{eq:kvalue}
k=\tan\left(\frac{\pi}{2m}\right),\tan\left(\frac{3\pi}{2m}\right),\tan\left(\frac{5\pi}{2m}\right),\ldots,\tan\left(\left(2\left\lfloor\frac{m}{2}\right\rfloor-1\right)\frac{\pi}{2m}\right).
\end{equation}
Between each pair of consecutive values listed in~(\ref{eq:kvalue}), $G_m(k)$ is strictly increasing from $-\infty$ to $+\infty$. For $k\in\left(0,\tan\left(\frac{\pi}{2m}\right)\right)$, $G_m(k)$ is strictly increasing from $0$ to $+\infty$, and for $k>\tan\left(\left(2\left\lfloor\frac{m}{2}\right\rfloor-1\right)\frac{\pi}{2m}\right)$, $G_m(k)$ is strictly increasing from $-\infty$ to $+\infty$ for odd $m$, and from $-\infty$ to $0$ for even $m$.

\begin{figure}
    \centering
    \includegraphics[width=\textwidth]{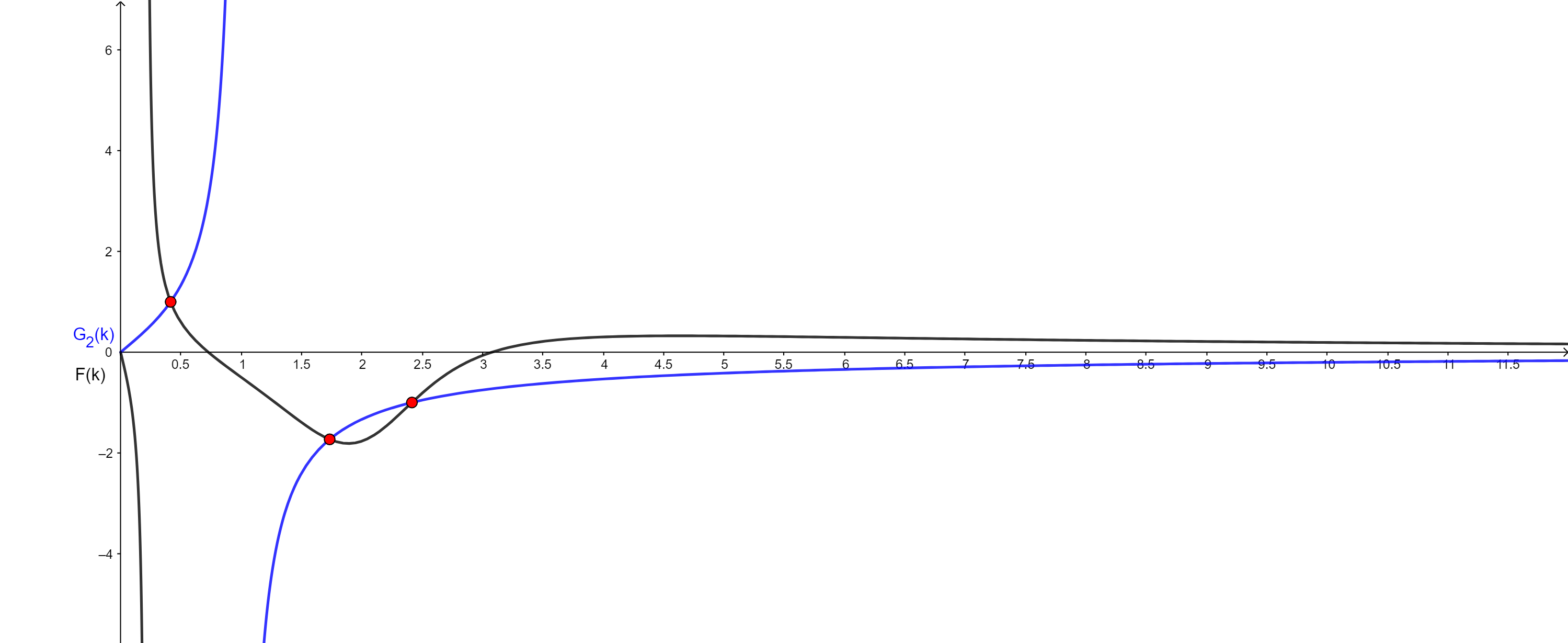}
    \caption{Graph of $F(k)$ (black) and $G_{2}(k)$ (blue), with intersections shown in red.}
    \label{fig:g2graph}
\end{figure}

\begin{figure}
    \centering
    \includegraphics[width=\textwidth]{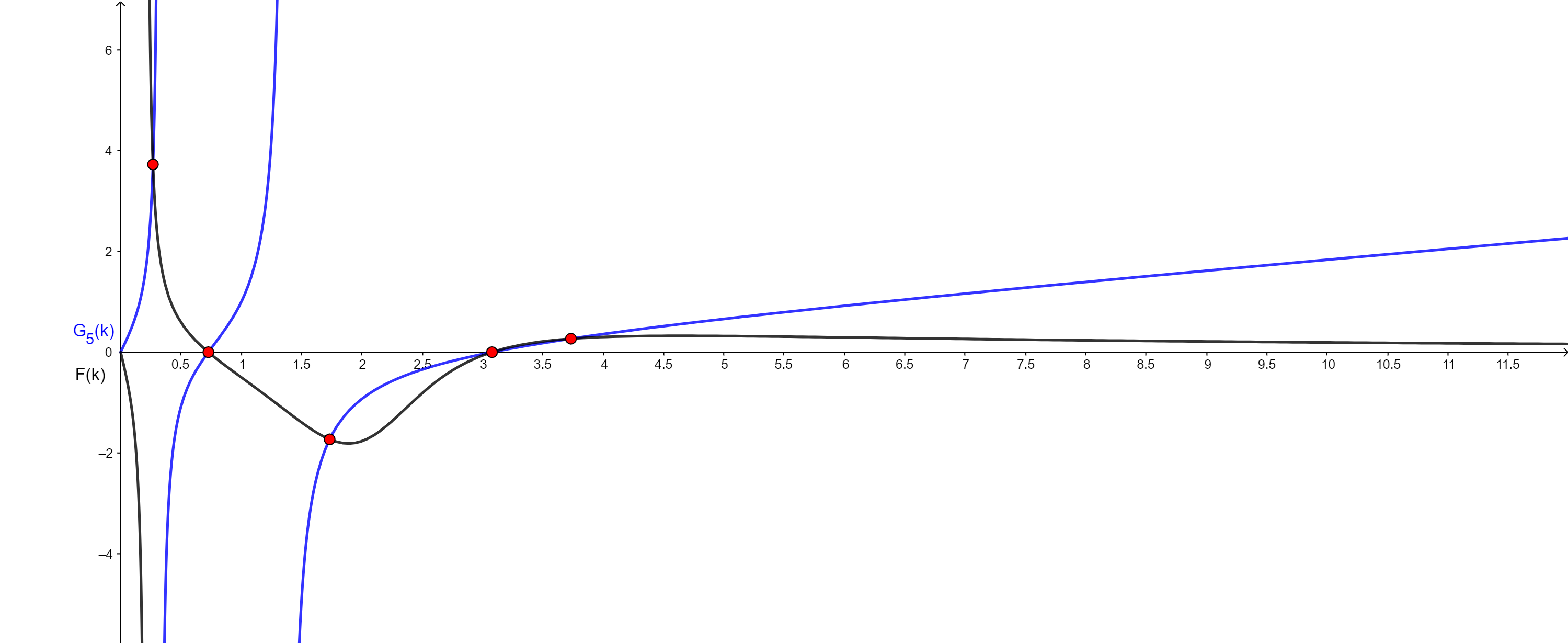}
    \caption{Graph of $F(k)$ (black) and $G_{5}(k)$ (blue), with intersections shown in red.}
    \label{fig:g5graph}
\end{figure}

\begin{figure}
    \centering
    \includegraphics[width=\textwidth]{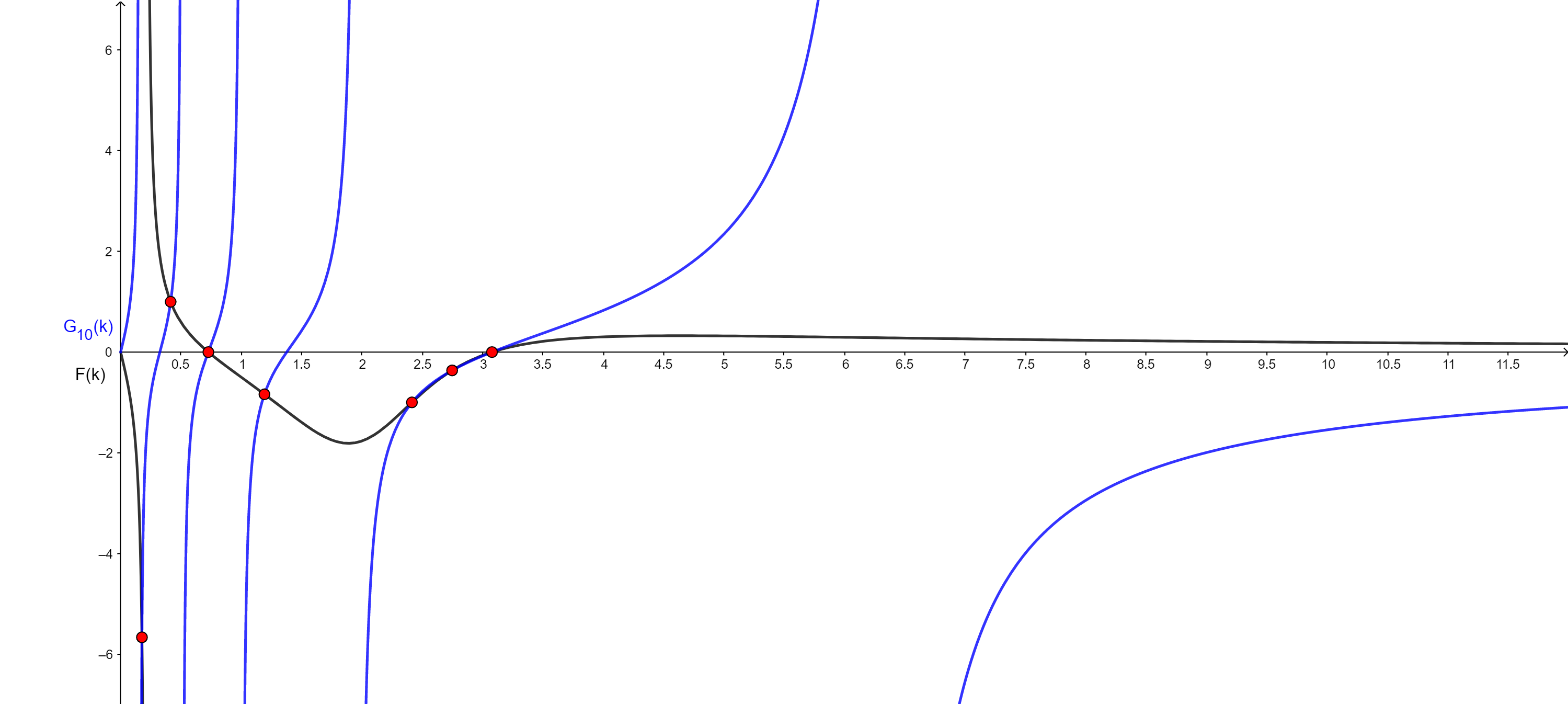}
    \caption{Graph of $F(k)$ (black) and $G_{10}(k)$ (blue), with intersections shown in red.}
    \label{fig:g10graph}
\end{figure}

\begin{figure}
    \centering
    \includegraphics[width=\textwidth]{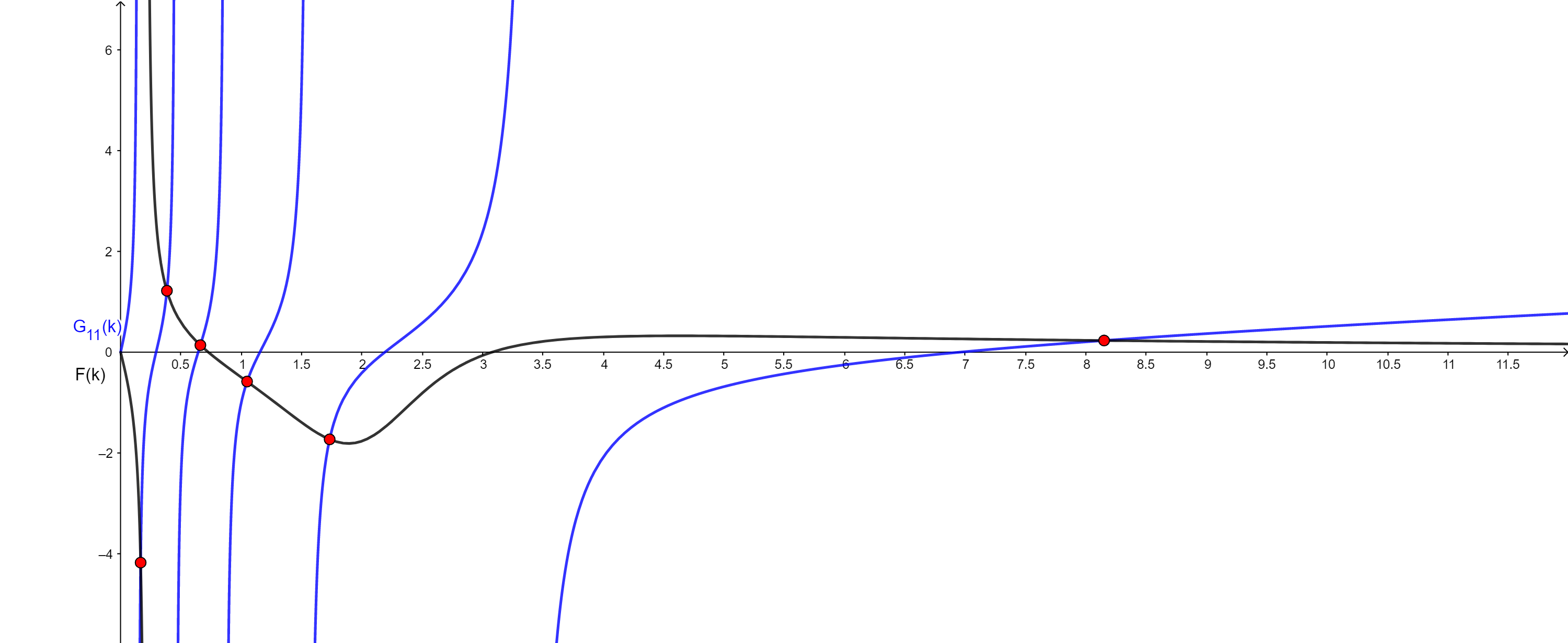}
    \caption{Graph of $F(k)$ (black) and $G_{11}(k)$ (blue), with intersections shown in red.}
    \label{fig:g11graph}
\end{figure}

\begin{figure}
    \centering
    \includegraphics[width=\textwidth]{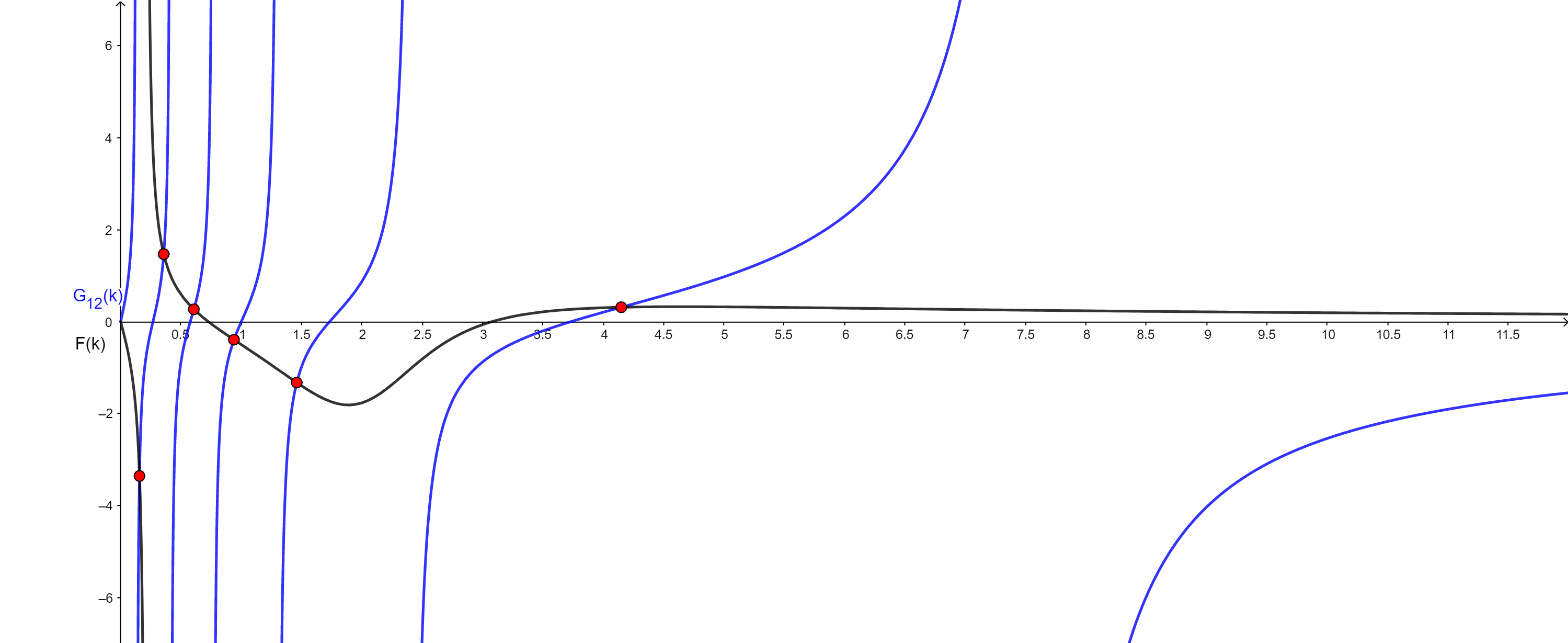}
    \caption{Graph of $F(k)$ (black) and $G_{12}(k)$ (blue), with intersections shown in red.}
    \label{fig:g12graph}
\end{figure}

\begin{figure}
    \centering
    \includegraphics[width=\textwidth]{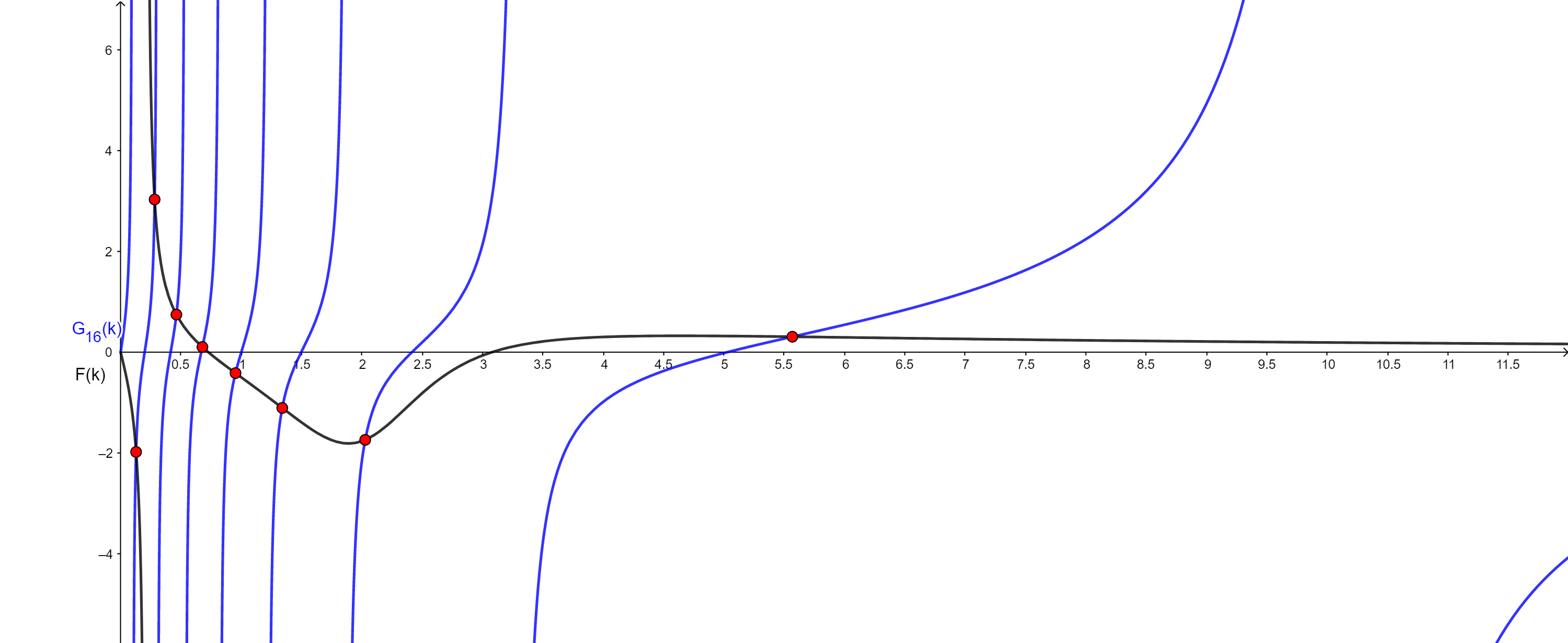}
    \caption{Graph of $F(k)$ (black) and $G_{16}(k)$ (blue), with intersections shown in red.}
    \label{fig:g16graph}
\end{figure}

\begin{figure}
    \centering
    \includegraphics[width=\textwidth]{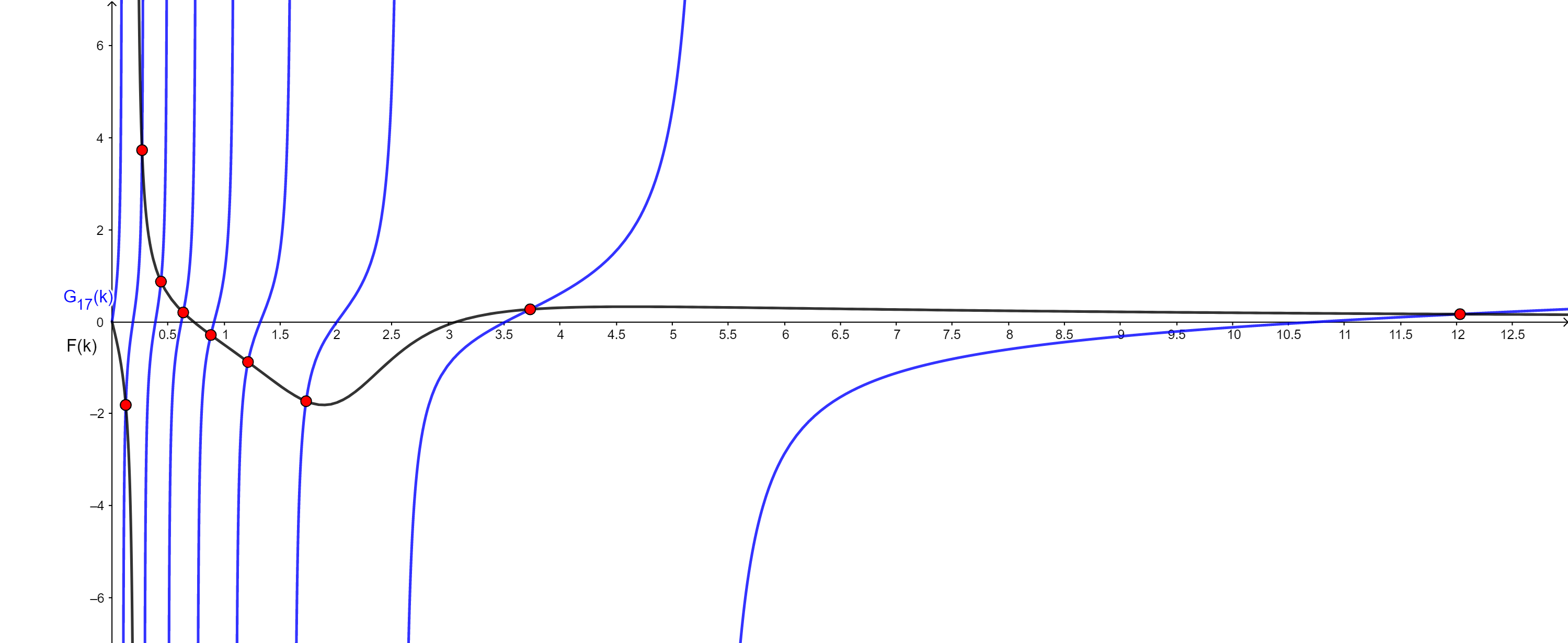}
    \caption{Graph of $F(k)$ (black) and $G_{17}(k)$ (blue), with intersections shown in red.}
    \label{fig:g17graph}
\end{figure}

We would like to determine how many times the functions $F(k)$ and $G_m(k)$ take on the same value. Now for $m\geq 16$, it turns out that
$$G_m^\prime(k) = \frac{m}{1+k^2}\sec^2(m\tan^{-1}(k)) \geq \frac{m}{1+k^2} > F^\prime(k)$$
for all values of $k$ where both functions are defined.

I claim that for $m\geq 16$, the functions $F(k)$ and $G_m(k)$ take the same value $\frac{m+1}{2}$ times if $m$ is odd and $\frac{m}{2}$ times if $m$ is even. Note that $m$ is large enough that $\alpha\approx0.21$ falls in between some pair of values $\tan\left((2j-1)\frac{\pi}{2m}\right)$ and $\tan\left((2j+1)\frac{\pi}{2m}\right)$ for some integer $j$ where $1\leq j\leq \left\lfloor\frac{m}{2}\right\rfloor-1$. Now for $k<\tan\left(\frac{\pi}{2m}\right)$, $F(k)<0$ and $G_m(k)>0$ so they cannot take the same value. Between each pair of consecutive values listed in~(\ref{eq:kvalue}) from $\tan\left(\frac{\pi}{2m}\right)$ to $\tan\left((2j-1)\frac{\pi}{2m}\right)$, and from $\tan\left((2j+1)\frac{\pi}{2m}\right)$ to $\tan\left(\left(2\left\lfloor\frac{m}{2}\right\rfloor-1\right)\frac{\pi}{2m}\right)$, $F(k)$ and $G_m(k)$ take the same value exactly once. This is because both functions are continuous between each pair of values, $G_m^\prime(k)>F^\prime(k)$, and $G_m(k)$ is strictly increasing from $-\infty$ to $+\infty$. Between $\tan\left((2j-1)\frac{\pi}{2m}\right)$ and $\alpha$, $F(k)$ and $G_m(k)$ take the same value exactly once. This is because both functions are continuous between each pair of values, $F(k)$ is strictly decreasing from some finite value to $-\infty$, and $G_m(k)$ is strictly increasing from $-\infty$ to some finite value. Similarly, between $\alpha$ and $\tan\left((2j+1)\frac{\pi}{2m}\right)$, $F(k)$ and $G_m(k)$ take the same value exactly once. This is because both functions are continuous between each pair of values, $F(k)$ is strictly decreasing from $+\infty$ to some finite value, and $G_m(k)$ is strictly increasing from some finite value to $+\infty$. Finally, for $k>\tan\left(\left(2\left\lfloor\frac{m}{2}\right\rfloor-1\right)\frac{\pi}{2m}\right)$, if $m$ is odd, then  $F(k)$ and $G_m(k)$ take the same value exactly once. This is because $m$ is large enough that $\tan\left(\left(2\left\lfloor\frac{m}{2}\right\rfloor-1\right)\frac{\pi}{2m}\right)>\gamma$, so $G_m(k)$ is strictly increasing from $-\infty$ to $+\infty$ while $F(k)$ is strictly decreasing from some finite positive value to $0$. If $m$ is even, then $F(k)$ and $G_m(k)$ do not take the same value in that region as $F(k) > 0$ whereas $G_m(k) < 0$. Counting the total number of times $F(k)$ and $G_m(k)$ take the same value proves the claim. Refer to Figure~\ref{fig:g16graph} for $m=16$ and Figure~\ref{fig:g17graph} for $m=17$ as representatives of the even and odd cases respectively.

Therefore, for $m\geq 16$, $p_m(x)=0$ has $\frac{m+1}{2}$ real roots less than $-\frac{1}{4}$ if $m$ is odd and $\frac{m}{2}$ real roots less than $-\frac{1}{4}$ if $m$ is even. However, we can see from the recurrence relation defining $p_m(x)$ that the degree of $p_m(x)$ is $\frac{m+1}{2}+2$ if $m$ is odd and $\frac{m}{2}+2$ if $m$ is even. Therefore, there have to be two more roots which are not real and less than $-\frac{1}{4}$. This eliminates $Y_{m,1,1}$ as a possible connected component of $G$ for $m\geq 16$. For $1\leq m\leq 15$, we find that the independence polynomial has all real roots less than $-\frac{1}{4}$ for $m=2,5,10$ and no other values of $m$. An examination of Figures~\ref{fig:g2graph}, \ref{fig:g5graph} and~\ref{fig:g10graph} would reveal what makes those values of $m$ special, in comparison to more `typical' values of $m$ such as $11$ (Figure~\ref{fig:g11graph}) and $12$ (Figure~\ref{fig:g12graph}).

\subsection{The graph $B_{m_1,m_2,m_3}$}

In this subsection, we will try to find values of $m_1$, $m_2$ and $m_3$ such that $B_{m_1,m_2,m_3}$ could potentially be a connected component of a graph $G\in\mathcal{I}(P_n)$.

By applying Proposition~\ref{prop:delvert} on $u$, we find that
\begin{eqnarray*}
I\left(B_{m_1,m_2,m_3},x\right) & = & I\left(D_{m_1+3},x\right)I\left(P_{m_2},x\right)I\left(P_{m_3},x\right)\\
& & + xI\left(D_{m_1+2},x\right)I\left(P_{m_2-1},x\right)I\left(P_{m_3-1},x\right).
\end{eqnarray*}
Substituting $x=\frac{1}{4}$, and applying Proposition~\ref{prop:recur2},
\begin{eqnarray*}
I\left(B_{m_1,m_2,m_3},-\frac{1}{4}\right) & = & \left(\frac{1}{2^{m_1+2}}\right)\left(\frac{m_2+2}{2^{m_2}}\right)\left(\frac{m_3+2}{2^{m_3}}\right)\\
& & -\frac{1}{4}\left(\frac{1}{2^{m_1+1}}\right)\left(\frac{m_2+1}{2^{m_2-1}}\right)\left(\frac{m_3+1}{2^{m_3-1}}\right)\\
& = &\frac{(m_2+2)(m_3+2)-2(m_2+1)(m_3+1)}{2^{m_1+m_2+m_3+2}}\\
& = &\frac{2-m_2m_3}{2^{m_1+m_2+m_3+2}}.
\end{eqnarray*}
Since the denominator is positive, if $m_2m_3 \geq 2$ then $I\left(B_{m_1,m_2,m_3},-\frac{1}{4}\right) \leq 0$ so $I\left(B_{m_1,m_2,m_3},x\right)$ has a root in the interval $\left[-\frac{1}{4},0\right)$ and hence cannot be a factor of $I\left(P_n,x\right)$ for any $n$. Hence, we only need to consider the case where $m_2=m_3=1$.

Unfortunately, the graph $B_{m_1,1,1}$ is as complicated as $Y_{m_1,1,1}$ in the previous subsection. Let $p_m(x)=I(B_{m,1,1},x)$ for $m\geq 1$. The
We have
\begin{eqnarray*}
p_0(x) & = & 1 + 6x + 9x^2 + 3x^3, \\
p_1(x) & = & 1 + 7x + 14x^2 + 8x^3 + 2x^4.
\end{eqnarray*}
Using Proposition~\ref{prop:delvert} on one of the vertices of degree~1, we find that
\begin{eqnarray*}
p_m(x) & = & I(D_{m+5},x) + x(1+x)I(D_{m+3},x) \\
& = & I(D_{m+4},x) + xI(D_{m+3},x) \\
& & + x(1+x)\left(I(D_{m+2},x)+xI(D_{m+1},x)\right) \\
& = & I(D_{m+4},x)+x(1+x)I(D_{m+2},x)\\
& & + x\left(I(D_{m+3},x)+x(1+x)I(D_{m+1},x)\right) \\
& = & p_{m-1}(x) + xp_{m-2}(x)
\end{eqnarray*}
for $m\geq 2$. Therefore, the recurrence relation~(\ref{eq:recur}) applies to $p_m(x)$.

The solution to the recurrence relation for $p_m(x)$ is
$$p_m(x)=A(x)\left(\frac{1+\sqrt{1+4x}}{2}\right)^m+B(x)\left(\frac{1-\sqrt{1+4x}}{2}\right)^m$$
where
\begin{eqnarray*}
A(x) & = & \frac{1}{2}\left(1 + 6x + 9x^2 + 3x^3 + \frac{1 + 8x + 19x^2 + 13x^3 + 2x^4}{\sqrt{1+4x}}\right),\\
B(x) & = & \frac{1}{2}\left(1 + 6x + 9x^2 + 3x^3 - \frac{1 + 8x + 19x^2 + 13x^3 + 2x^4}{\sqrt{1+4x}}\right).
\end{eqnarray*}

Carrying out the same steps as for $Y_{m_1,1,1}$ and (again) substituting $x=-\frac{1}{4}(1+k^2)$, we arrive at
$$\tan\left(m \tan^{-1}(k)\right) = \frac{2k(1 - 33k^2 + 27k^4 - 3k^6)}{1 + 26k^2 - 80k^4 + 22k^6 - k^8}.$$

Now, the function
$$F(k)=\frac{1-33k^2+27k^4-3k^6}{k(11-43k^2+9k^4-k^6)}$$
for positive $k$ is not defined when $11-43k^2+9k^4-k^6=0$, which has one positive root $\alpha$ (which is approximately $0.52$). For $k\in(0,\alpha)$, $F(k)$ is strictly decreasing from $+\infty$ to $-\infty$. For $k>\alpha$, the behaviour of $F(k)$ is more complicated. It has a local minimum at $\beta\approx1.998$ and a local maximum at $\gamma\approx4.13$. For $k\in(\alpha,\beta]$, $F(k)$ is strictly decreasing from $+\infty$ to approximately $-0.673$.  For $k\in[\beta,\gamma]$, $F(k)$ is strictly increasing from approximately $-0.673$ to approximately $0.600$,and for $k\geq\gamma$, it is strictly decreasing from $0.600$ to approach $0$ asymptotically. The only region where $F(k)$ is increasing is $[\beta,\gamma]$, and in this region there is only one point of inflection, at approximately $k=2.592$. This is where $F^\prime(k)$ reaches its maximum value of approximately $1.281$.

The function
$$G_m(k)=\tan\left(m\tan^{-1}(k)\right)$$
has already been described in the previous subsection.

We would like to determine how many times the functions $F(k)$ and $G_m(k)$ take on the same value. Now for $m\geq 11$, it turns out that
$$G_m^\prime(k) = \frac{m}{1+k^2}\sec^2(m\tan^{-1}(k)) \geq \frac{m}{1+k^2} > F^\prime(k)$$
for all values of $k$ where both functions are defined.

I claim that for $m\geq 11$, the functions $F(k)$ and $G_m(k)$ take the same value $\frac{m+3}{2}$ times if $m$ is odd and $\frac{m+1}{2}$ times if $m$ is even. Note that $m$ is large enough that $\alpha\approx0.52$ falls in between some pair of values $\tan\left((2j-1)\frac{\pi}{2m}\right)$ and $\tan\left((2j+1)\frac{\pi}{2m}\right)$ for some integer $j$ where $1\leq j\leq \left\lfloor\frac{m}{2}\right\rfloor-1$. Now for $k<\tan\left(\frac{\pi}{2m}\right)$, $F(k)$ and $G_m(k)$ take the same value exactly once. This is because both functions are continuous between each pair of values, and $F(k)$ is decreasing while $G_m(k)$ is increasing. Between each pair of consecutive values listed in~(\ref{eq:kvalue}) from $\tan\left(\frac{\pi}{2m}\right)$ to $\tan\left((2j-1)\frac{\pi}{2m}\right)$, and from $\tan\left((2j+1)\frac{\pi}{2m}\right)$ to $\tan\left(\left(2\left\lfloor\frac{m}{2}\right\rfloor-1\right)\frac{\pi}{2m}\right)$, $F(k)$ and $G_m(k)$ take the same value exactly once. This is because both functions are continuous between each pair of values, $G_m^\prime(k)>F^\prime(k)$, and $G_m(k)$ is strictly increasing from $-\infty$ to $+\infty$. Between $\tan\left((2j-1)\frac{\pi}{2m}\right)$ and $\alpha$, $F(k)$ and $G_m(k)$ take the same value exactly once. This is because both functions are continuous between each pair of values, $F(k)$ is strictly decreasing from some finite value to $-\infty$, and $G_m(k)$ is strictly increasing from $-\infty$ to some finite value. Similarly, between $\alpha$ and $\tan\left((2j+1)\frac{\pi}{2m}\right)$, $F(k)$ and $G_m(k)$ take the same value exactly once. This is because both functions are continuous between each pair of values, $F(k)$ is strictly decreasing from $+\infty$ to some finite value, and $G_m(k)$ is strictly increasing from some finite value to $+\infty$. Finally, for $k>\tan\left(\left(2\left\lfloor\frac{m}{2}\right\rfloor-1\right)\frac{\pi}{2m}\right)$, if $m$ is odd, then  $F(k)$ and $G_m(k)$ take the same value exactly once. This is because $m$ is large enough that $\tan\left(\left(2\left\lfloor\frac{m}{2}\right\rfloor-1\right)\frac{\pi}{2m}\right)>\gamma$, so $G_m(k)$ is strictly increasing from $-\infty$ to $+\infty$ while $F(k)$ is strictly decreasing from some finite positive value to $0$. If $m$ is even, then $F(k)$ and $G_m(k)$ do not take the same value in that region as $F(k) > 0$ whereas $G_m(k) < 0$. Counting the total number of times $F(k)$ and $G_m(k)$ take the same value proves the claim.

Therefore, for $m\geq 11$, $p_m(x)=0$ has $\frac{m+3}{2}$ real roots less than $-\frac{1}{4}$ if $m$ is odd and $\frac{m+2}{2}$ real roots less than $-\frac{1}{4}$ if $m$ is even. However, we can see from the recurrence relation defining $p_m(x)$ that the degree of $p_m(x)$ is $\frac{m+3}{2}+2$ if $m$ is odd and $\frac{m+2}{2}+2$ if $m$ is even. Therefore, there have to be two more roots which are not real and less than $-\frac{1}{4}$. This eliminates $B_{m,1,1}$ as a possible connected component of $G$ for $m\geq 11$. For $1\leq m\leq 10$, we find that the independence polynomial has all real roots less than $-\frac{1}{4}$ for $m=0,5$ and no other values of $m$.

\subsection{The graphs $A_{m_1,m_2}$ and $E_{m_1,m_2}$}

It was already shown in Ng~\cite{ng} that $A_{m_1,m_2}$ is independence equivalent to both $E_{m_1,m_2}$ and $E_{m_2,m_1}$. This can be seen by applying Proposition~\ref{prop:delvert}. $E_{m_1,m_2}-u$ and $A_{m_1,m_2}-v$ are both isomorphic to $P_{m_1+m_2+2}$ and $E_{m_1,m_2}-N[u]$ and $A_{m_1,m_2}-N[v]$ are both isomorphic to $P_{m_1}\cup P_{m_2}$. Therefore, we will only consider $A_{m_1,m_2}$.

For $m_1\geq 3$, we can apply Proposition~\ref{prop:delvert} on $v_{m_1}$ to get
$$I(A_{m_1,m_2},x) = I(A_{m_1-1,m_2},x)+xI(A_{m_1-2,m_2},x),$$
so the recurrence relation~(\ref{eq:recur}) is satisfied, with $m_1$ as the index. We can then use the solution~(\ref{eq:recursol}) to determine the sign of $I\left(A_{m_1,m_2},-\frac{1}{4}\right)$.

When $m_2=1$ we find that $\tau_{m_1}=2^{-(m_1+5)}(4-m_1)$ so $I\left(A_{m_1,1},-\frac{1}{4}\right) \leq 0$ for $m_1\geq 4$.

When $m_2=2$ we find that $\tau_{m_1}=2^{-(m_1+5)}(2-m_1)$ so $I\left(A_{m_1,1},-\frac{1}{4}\right) \leq 0$ for $m_1\geq 2$.

Since $A_{m_1,m_2}$ and $A_{m_2,m_1}$ are isomorphic, we can use these two values to help us find $A_{m_1,m_2}$ in general. For general $m_2$, we have
$$\tau_1 = \frac{U+V}{2} = \frac{4-m_2}{2^{m_2+5}},\qquad\tau_2 = \frac{2U+V}{2} = \frac{2-m_2}{2^{m_2+5}}$$
so
$$I\left(A_{m_1,m_2},-\frac{1}{4}\right)=\tau_{m_1}=\frac{Um_1+V}{2^{m_1}}=\frac{4-m_1m_2}{2^{m_1+m_2+5}}.$$
Therefore, $I\left(A_{m_1,m_2},-\frac{1}{4}\right)>0$ only if $m_1m_2 < 4$. This leaves $A_{3,1}$, $A_{2,1}$ and $A_{1,1}$ as candidate connected components of $G$.

\subsection{Shortlisting the candidates}

Currently the list of graphs we can have as connected components $G_i$ of $G\in\mathcal{I}(P_n)$, are given in the table below. Their independence polynomials are also given in terms of $f_k(x)$ and $\tilde{f}_k(x)$ from Propositions~\ref{prop:cyclicfactor} and~\ref{prop:pathfactor}.

\begin{center}
\begin{tabular}{c|c|c}
Graph $G_i$ & \begin{tabular}{c}Independence\\polynomial $I(G_i,x)$\end{tabular} & Eliminate?\\
\hline
$P_k$ & $I(P_k,x)$ &  \\
$C_k$, $D_k$ & $I(C_k,x)$ &  \\
$Y_{z,2,1}$ where $z\geq 1$ & $\tilde{f}_3(x)I(C_{z+3},x)$ &  \\
$Y_{10,1,1}$ & $f_4(x)f_9(x)\tilde{f}_5(x)$ & Eliminate \\
$Y_{9,3,1}$ & $f_{21}(x)\tilde{f}_5(x)$ & Eliminate \\
$Y_{7,3,1}$ & $f_{15}(x)\tilde{f}_7(x)$ & Eliminate \\
$Y_{5,4,1}$ & $f_3(x)f_{15}(x)\tilde{f}_3(x)$ & Eliminate \\
$Y_{5,1,1}$ & $f_6(x)\tilde{f}_3(x)\tilde{f}_5(x)$ & Eliminate \\
$Y_{4,3,1}$ & $f_9(x)\tilde{f}_5(x)$ & Eliminate \\
$Y_{4,2,2}$ & $f_{12}(x)\tilde{f}_3(x)$ &  \\
$Y_{3,3,2}$ & $f_2(x)f_{15}(x)$ & Eliminate \\
$Y_{3,2,2}$ & $f_2(x)f_9(x)$ & Eliminate \\
$B_{5,1,1}$ & $f_4(x)f_{15}(x)$ & Eliminate \\
$B_{0,1,1}$, $E_{2,1}$, $E_{1,2}$, $A_{2,1}$ & $f_9(x)$ & \\
$E_{3,1}$, $E_{1,3}$, $A_{3,1}$ & $f_{15}(x)$ & \\
$E_{1,1}$, $A_{1,1}$ & $f_6(x)\tilde{f}_3(x)$ & \\
$K_4-e$ & $f_6(x)$ &
\end{tabular}
\end{center}

Returning to Proposition~\ref{prop:pathfactor}, we note that if $f_r(x)$ (respectively, $\tilde{f}_s(x)$) is a factor of $I(P_{n},x)$, then so are $f_{r^\prime}(x)$ (respectively, $\tilde{f}_{s^\prime}(x)$) for all $r^\prime\vert r$ (respectively, all $s^\prime\vert s$ where $s>1$). Furthermore, if $\tilde{f}_s(x)$ is a factor of $I(P_{n},x)$ then so is $f_s(x)$. The converse also holds if $s$ is odd. Similarly, from Proposition~\ref{prop:cyclicfactor}, if $f_r(x)$ is a factor of $I(C_n,x)$, then so are $f_{r^\prime}(x)$ for which $r^\prime$ is a factor of $r$, and $\frac{r}{r^\prime}$ is odd.

We can eliminate some of these as potential factors of $I(P_{n},x)$ for this reason. For instance, in the case of $Y_{10,1,1}$, the presence of $f_4(x)$ and $f_9(x)$ means that if $I(Y_{10,1,1},x)\vert I(P_{n},x)$, then $4\vert ((n+2)/2)$ and $9\vert ((n+2)/2)$, so $72\vert (n+2)$, and therefore, $f_{36}(x)\vert I(P_{n},x)$. However, it can be seen from the list of $I(G_i,x)$ in the table that it is not possible for $f_{36}(x)$ to be in a factor of $I(G_i,x)$ which is a factor of $I(P_{n},x)$, without $f_4(x)$ or $f_9(x)$ being also factors of the same $I(G_i,x)$. Hence, $I(Y_{10,1,1},x)$ cannot be one of the connected components of a graph $G$ independence equivalent to $P_{n}$.

We eliminate the following graphs based on similar arguments.
\begin{itemize}
\item $Y_{10,1,1}$, as mentioned above.
\item $Y_{9,3,1}$, since $f_{21}(x)\vert I(P_{n},x)$ and $\tilde{f}_5(x)\vert I(P_{n},x)$ together imply that $210\vert (n+2)$, but it is not possible for $f_{105}(x)\vert I(G_i,x)$ without $f_{21}(x)\vert I(G_i,x)$.
\item $Y_{7,3,1}$, since $f_{15}(x)\vert I(P_{n},x)$ and $\tilde{f}_7(x)\vert I(P_{n},x)$ together imply that $210\vert (n+2)$, but it is not possible for $f_{105}(x)\vert I(G_i,x)$ without $f_{15}(x)\vert I(G_i,x)$.
\item $Y_{5,4,1}$, since $f_{15}(x)\vert I(P_{n},x)$ implies that $30\vert (n+2)$, but it is not possible for $\tilde{f}_{15}(x)\vert I(G_i,x)$ without $\tilde{f}_3(x)\vert I(G_i,x)$.
\item $Y_{5,1,1}$, since $\tilde{f}_3(x)\vert I(P_{n},x)$ and $\tilde{f}_5(x)\vert I(P_{n},x)$ together imply that $15\vert (n+2)$, but it is not possible for $\tilde{f}_{15}(x)\vert I(G_i,x)$ without $\tilde{f}_3(x)\vert I(G_i,x)$.
\item $Y_{4,3,1}$, since $f_{9}(x)\vert I(P_{n},x)$ and $\tilde{f}_5(x)\vert I(P_{n},x)$ together imply that $90\vert (n+2)$, but it is not possible for $\tilde{f}_{45}(x)\vert I(G_i,x)$ without $\tilde{f}_5(x)\vert I(G_i,x)$.
\item $Y_{3,3,2}$, since $f_2(x)\vert I(P_{n},x)$ and $f_{15}(x)\vert I(P_{n},x)$ together imply that $60\vert (n+2)$, but it is not possible for $f_{30}(x)\vert I(G_i,x)$ without $f_2(x)\vert I(G_i,x)$.
\item $Y_{3,2,2}$, since $f_2(x)\vert I(P_{n},x)$ and $f_9(x)\vert I(P_{n},x)$ together imply that $36\vert (n+2)$, but it is not possible for $f_{18}(x)\vert I(G_i,x)$ without $f_2(x)\vert I(G_i,x)$.
\item $B_{5,1,1}$, since $f_4(x)\vert I(P_{n},x)$ and $f_{15}(x)\vert I(P_{n},x)$ together imply that $120\vert (n+2)$, but it is not possible for $f_{60}(x)\vert I(G_i,x)$ without $f_4(x)\vert I(G_i,x)$.
\end{itemize}
Furthermore, in the case of $Y_{z,2,1}$, the presence of $\tilde{f}_3(x)$ means that $3\vert (n+2)$, hence $f_3(x)\vert (n+2)$, and hence, $6\vert (n+2)$.

We have indicated the graphs to be eliminated in the table.

\section{Final result}

We are now ready to list the graphs which can be in $\mathcal{I}(P_{n})$, where $n$ is even.

Suppose that $n+2=2^t m$, where $m$ is odd. Now, $\tilde{f}_m(x)\vert I(G,x)$, and for any $s\vert m$, we also have $\tilde{f}_s(x)\vert I(G,x)$. Examining the table of candidate graphs for $G_i$, it is not possible for $\tilde{f}_m(x)$ to be a factor of $I(G_i,x)$ without $\tilde{f}_s(x)$ being also a factor of the same $I(G_i,x)$. Hence, there exists a $G_i$ such that
$$\left. \prod_{s\vert m}\tilde{f}_s(x)\right\vert I(G_i,x).$$

\subsection{Case 1: $n=2^t m -2$, where $m\neq 3$}

Examining the table of candidate connected graphs $G_i$ and the factorisation of paths and cycle graphs in Propositions~\ref{prop:cyclicfactor} and~\ref{prop:pathfactor}, we find that for $m$ in Case~1,
the only possible graphs for $G_i$ are paths $P_{2^{t^\prime}m-2}$ for some $0\leq t^\prime < t$.

The remaining factors of $I(G,x)$ that are not factors of $I(P_{2^{t^\prime}m-2},x)$ are of the form $f_{2^j r}(x)$, where $t^\prime \leq j \leq t-1$ and $r\vert m$.

\textbf{Subcase~1.1: $n=2^t m -2$, where $m\neq 9$ and $m\neq 15$}

For each $j\in\{t^\prime,\ldots,t-1\}$, $f_{2^j m}(x)\vert I(G_i,x)$ for some $G_i$ that is not $P_{n^\prime}$. Examining the table of candidate graphs for $G_i$, the only possibilities are the cycle graphs or $D_k$. Since $D_k$ is independence equivalent to the cycle graph $C_k$, we will only consider the cycle graphs.

Now if $f_{2^j m}(x)\vert I(C_k,x)$, then $f_{2^j r}(x)\vert I(C_k,x)$ for all $r\vert m$. Hence
$$I(C_{2^j m},x)=\left.\prod_{r\vert m} f_{2^j m}(x)\right\vert I(C_k,x).$$
By Corollary~\ref{cor:cyclicfactor}, $k/m$ must be odd. On the other hand, there are no factors of $I(P_{n},x)$ of the form $f_{k}(x)$ where $k/m$ is an odd number greater than~$1$, hence $k=m$ and so $G_i=C_{2^j m}$.

Since this is true for each $j=t^\prime,\ldots,t-1$, we have each cycle graph $C_{2^j m}$ as one of the connected components of $G$. Therefore, the graphs which are independence equivalent to $P_{n}$, where $n=2^t m-2$, $m$ odd, are of the form
$$Z_{n,t-t^\prime} = C_{2^{t-1}m} \cup C_{2^{t-2}m} \cup \cdots \cup C_{2^{t^\prime}m} \cup  P_{2^{t^\prime} m-2} $$
for some $t^\prime < t$ (recall Definition~\ref{def:mathcalp}).
Therefore,
$$\mathcal{I}(P_{n})=\{P_n\}\cup\bigcup_{t^\prime = 0}^{t-1} \mathcal{D}(Z_{n,t-t^\prime})=\mathcal{P}_n.$$

\textbf{Subcase 1.2: $n=9\times 2^t - 2$}

Since $\tilde{f}_9(x)\vert I(Gx)$, one of the connected components of $G$ must be $P_{9\times2^{t^\prime}-2}$ for some $t^\prime < t$.

Now if $t^\prime \geq 1$, then the remaining factors of $I(G,x)$ that are not factors of $I(P_{9\times2^{t^\prime}-2},x)$ are of the form $f_{2^j r}(x)$, where $1 < t^\prime \leq j \leq t-1$ and $r\vert 9$, and so the only possibilities for the remaining $G_i$ are cycle graphs or $D_k$. Similarly to Subcase~1.1, we find that the graphs which are independence equivalent to $P_{n}$, where $n=9\times 2^t -2$, are of the form
$$Z_{n,t-t^\prime} = C_{9\times2^{t-1}} \cup C_{9\times2^{t-2}m} \cup \cdots \cup C_{9\times2^{t^\prime}} \cup  P_{9\times2^{t^\prime}-2}$$
for some $t^\prime < t$ (recall Definition~\ref{def:mathcalp}).

If $t^\prime =0$, then the remaining factors of $I(G,x)$ that are not factors of $I(P_{9\times2^{t^\prime}-2},x)$ are of the form $f_{2^j r}(x)$, where $0 \leq j \leq t$ and $r\vert 9$. For each $j \geq 1$, following the same argument above, we must have the cycle graph $C_{9 \times 2^j}$. However, for $j=0$, we can replace the $C_9$ component with another member of $\mathcal{I}(C_9)$. In other words, $G\in\mathcal{D}(Z_{n,t})$ (recall Definition~\ref{def:mathcald}).

Therefore,
$$\mathcal{I}(P_{n})=\{P_n\}\cup\bigcup_{t^\prime = 0}^{t-1} \mathcal{D}(Z_{n,t-t^\prime})=\mathcal{P}_n.$$

\textbf{Subcase 1.3: $n=15\times 2^t - 2$}

Since $\tilde{f}_{15}(x)\vert I(Gx)$, one of the connected components of $G$ must be $P_{15\times 2^{t^\prime}-2}$ for some $t^\prime < t$.

Now if $t^\prime \geq 1$, then the remaining factors of $I(G,x)$ that are not factors of $I(P_{15\times 2^{t^\prime}-2},x)$ are of the form $f_{2^j r}(x)$, where $1 < t^\prime \leq j \leq t-1$ and $r\vert 15$, and so the only possibilities for the remaining $G_i$ are cycle graphs or $D_k$. Similarly to Subcase~1.1, we find that the graphs which are independence equivalent to $P_{n}$, where $n=15 \times 2^t -2$, are of the form
$$Z_{n,t-t^\prime} = C_{15\times2^{t-1}} \cup C_{15\times2^{t-2}} \cup \cdots \cup C_{15\times2^{t^\prime}} \cup  P_{15\times2^{t^\prime}-2}$$
for some $t^\prime < t$ (recall Definition~\ref{def:mathcalp}).

If $t^\prime =0$, then the remaining factors of $I(G,x)$ that are not factors of $I(P_{15\times 2^{t^\prime}-2},x)$ are of the form $f_{2^j \times r}(x)$, where $0 \leq j \leq t$ and $r\vert 15$. For each $j \geq 1$, following the same argument above, we must have the cycle graph $C_{15 \times 2^j}$. However, for $j=0$, we can replace the $C_{15}$ component with another member of $\mathcal{I}(C_{15})$. In other words, $G\in\mathcal{D}(Z_{n,t})$ (recall Definition~\ref{def:mathcald}).

Therefore,
$$\mathcal{I}(P_{n})=\{P_n\}\cup\bigcup_{t^\prime = 0}^{t-1} \mathcal{D}(Z_{n,t-t^\prime})=\mathcal{P}_n.$$

\subsection{Case~2: $n=3\times 2^t - 2$}

In this case, $\tilde{f}_3(x)|I(G,x)$. We consider various subcases for the graph $G_i$ such that $\tilde{f}_3(x)|I(G_i,x)$.

\textbf{Subcase 2.1: $G_i=P_{3\times2^{t^\prime}-2}$ for some $0\leq t^\prime < t$}

The remaining factors of $I(G,x)$ that are not factors of $I(P_{3\times2^{t^\prime}-2},x)$ are of the form $f_{2^j}(x)$ or $f_{3\times 2^j}(x)$, where $t^\prime \leq j \leq t-1$.

If $t^\prime \geq 2$ then the only possibilities for the remaining $G_i$ are cycle graphs or $D_k$. Similarly to Subcase~1.1, we find that the graphs which are independence equivalent to $P_{n}$, where $n=3\times 2^t$, are of the form
$$Z_{n,t-t^\prime} = C_{3\times2^{t-1}} \cup C_{3\times2^{t-2}} \cup \cdots \cup C_{3\times2^{t^\prime}} \cup  P_{3\times2^{t^\prime}-2}$$
for some $t^\prime < t$ (recall Definition~\ref{def:mathcalp}).

If $t^\prime \leq 1$ then the remaining factors of $I(G,x)$ that are not factors of $I(P_{3\times2^{t^\prime}-2},x)$ are of the form $f_{2^j}(x)$ or $f_{3\times 2^j}(x)$, where $t^\prime \leq j \leq t$. For each $j \geq 2$, following the same argument above, we must have the cycle graph $C_{3\times2^j}$. However, for $j\leq1$, we can replace the $C_6$ component with another member of $\mathcal{I}(C_6)$. In other words, $G\in\mathcal{D}(Z_{n,t})$ (recall Definition~\ref{def:mathcald}).

\textbf{Subcase 2.2: $G_i=Y_{4,2,2}$}

Since $f_{12}(x)\vert I(Y_{4,2,2},x)\vert I(P_{n},x)$, we have $24\vert (n+2)$ so $t\geq 3$.

The remaining factors of $I(P_{n},x)=I(G,x)$ that are not factors of $I(Y_{4,2,2},x)$ are of the form $f_{2^j}(x)$ or $f_{2^j \times 3}(x)$ for $3\leq j < t$, as well as $f_2(x)$, $f_3(x)$, $f_4(x)$ and $f_6(x)$.

Now for each $j\geq 3$, following the same argument above, we must have the cycle graph $C_{3\times2^j}$. For the remaining factors, the connected graphs to consider are $P_2$, $P_6$, $C_3$, $C_4$, $C_6$ and $K_4-e$. Hence, we have the following possibilities:
\begin{itemize}
\item $Y_{4,2,2}\cup P_2 \cup C_3 \cup C_4 \cup (K_4-e) \cup \displaystyle\bigcup_{j=3}^{t-1} C_{2^j\times 3}$
\item $Y_{4,2,2}\cup C_3 \cup C_4 \cup C_6 \cup \displaystyle\bigcup_{j=3}^{t-1} C_{2^j\times 3}$
\item $Y_{4,2,2} \cup C_3 \cup P_6 \cup (K_4-e) \cup \displaystyle\bigcup_{j=3}^{t-1} C_{2^j\times 3}$
\end{itemize}
where $\bigcup_{j=3}^{t-1} C_{3\times2^j}$ is the empty graph if $t=2$. These are the graphs that are used to define $\mathcal{Y}^{(4)}_n$ and $\mathcal{Y}^{(5)}_n$. Therefore, $G\in \mathcal{Y}^{(4)}_n\cup\mathcal{Y}^{(5)}_n$ (recall Definition~\ref{def:mathcaly45}).

\textbf{Subcase 2.3: $G_i=E_{1,1}$ or $A_{1,1}$}

Since $f_{6}(x)\vert I(G_i,x)\vert I(P_{n},x)$, we have $12\vert (n+2)$ so $t\geq 2$.

The remaining factors of $I(P_{n},x)=I(G,x)$ that are not factors of $I(G_i,x)$ are of the form $f_{2^j}(x)$ or $f_{2^j \times 3}(x)$ for $2\leq j < t$, as well as $f_2(x)$ and $f_3(x)$.

Now for each $j\geq 2$, following the same argument above, we must have the cycle graph $C_{3\times2^j}$. For the remaining factors, the $G_i$ to consider are $P_2$ and $C_3$. Hence, we have the following possibilities:
\begin{itemize}
\item $E_{1,1}\cup P_2 \cup C_3 \cup \displaystyle\bigcup_{j=2}^{t-1} C_{2^j\times 3}$
\item $A_{1,1}\cup P_2 \cup C_3 \cup \displaystyle\bigcup_{j=2}^{t-1} C_{2^j\times 3}$
\end{itemize}
where $\bigcup_{j=2}^{t-1} C_{3\times2^j}$ is the empty graph if $t=2$. These are the graphs that are used to define $\mathcal{Y}^{(2)}_n$ and $\mathcal{Y}^{(3)}_n$. Therefore, $G\in \mathcal{Y}^{(2)}_n\cup\mathcal{Y}^{(3)}_n$ (recall Definition~\ref{def:mathcaly23}).

\textbf{Subcase 2.4: $G_i=Y_{z,2,1}$}

Since $I(Y_{z,2,1},x)=I(P_1,x)I(C_{z+3},x)$, we need only look for instances where $P_1$ and $C_{z+3}$ appear as connected components in the same graph~$G$. It turns out these graphs all appeared in Subcase~2.1 and in none of the other subcases. This leads to the set $\mathcal{Y}^{(1)}_n$ (recall Definition~\ref{def:mathcaly1}). Therefore, $G\in \mathcal{Y}^{(1)}_n$ (recall Definition~\ref{def:mathcaly1}).

\subsection{Conclusion}

This concludes the proof of Theorem~\ref{prop:pathequiv}, and provides a complete solution to Problem~1 of~\cite{beaton}, which asks for the independence equivalence class of paths of even orders. It may also be pointed out that the methods used to construct the table (in particular, the inclusion of $B_{0,1,1}$ and the various $A_{m_{1},m_{2}}$ and $E_{m_{1},m_{2}}$ graphs) provide an alternative proof of Theorem~6 of~\cite{ng}.

%\bibliography{mybibfile}
\printbibliography
\end{document}